\newtheorem{thm}{Theorem} 
\newtheorem{obs}[thm]{Observation}
\newtheorem{prop}[thm]{Proposition}
\newtheorem{cor}[thm]{Corollary}
\newtheorem{conj}[thm]{Conjecture}
\newtheorem{question}[thm]{Question}
\newenvironment{bullets} {\vspace{-9pt}\begin{itemize}\itemsep0pt} {\end{itemize}\vspace{-9pt}}
\newenvironment{myquote} {\vspace{-9pt}\begin{quote}} {\end{quote}\vspace{-9pt}}
\newcommand{\av}{\mathsf{Av}}
\newcommand{\bbE}{\mathbb{E}}
\newcommand{\bbP}{\mathbb{P}}
\newcommand{\C}{\mathbf{C}}
\newcommand{\CCC}{\mathcal{C}}
\newcommand{\ccnm}{\CCC_{n,m}}
\newcommand{\ceil}[1]{\left\lceil #1 \right\rceil}
\newcommand{\cnm}[1][m]{\C_{n,#1}}
\newcommand{\cnp}[1][p]{\C_{n,#1}}
\newcommand{\cov}[1]{\mathsf{Cov}\big[#1\big]}
\newcommand{\e}{\mathbf{e}}
\newcommand{\EEE}{\mathcal{E}}
\newcommand{\eenm}{\EEE_{n,m}}
\newcommand{\enm}[1][m]{\e_{n,#1}}
\newcommand{\enp}[1][p]{\e_{n,#1}}
\newcommand{\eq}{\mathchoice{\;=\;}{=}{=}{=}}
\newcommand{\expec}[1]{\bbE\big[#1\big]}
\newcommand{\floor}[1]{\left\lfloor #1 \right\rfloor}
\newcommand{\G}{\mathbf{G}}
\newcommand{\geqs}{\geqslant}
\newcommand{\gnm}[1][m]{\G_{n,#1}}
\newcommand{\gnp}[1][p]{\G_{n,#1}}
\DeclareMathOperator{\inv}{\mathsf{inv}}
\newcommand{\leqs}{\leqslant}
\newcommand{\liminfty}[1][n]{\lim\limits_{#1\to\infty}}
\newcommand{\prob}[1]{\bbP\big[#1\big]}
\newcommand{\QQQ}{\mathcal{Q}}
\newcommand{\s}{\boldsymbol{\sigma}}
\newcommand{\snm}[1][m]{\s_{n,#1}}
\newcommand{\snp}[1][p]{\s_{n,#1}}
\newcommand{\ssnm}{\SSS_{n,m}}
\newcommand{\SSS}{\mathcal{S}}
\newcommand{\together}[1]{\Needspace*{#1\baselineskip}}
\newcommand{\var}[1]{\mathrm{Var}\big[#1\big]}
\newcommand{\veps}{\varepsilon}
\newcommand{\plotc}[2]  
{
  \draw[thick] (0,0)--(#1,0);
  \foreach \y [count=\x] in {#2}
  {
    \draw (\x-.5,-1.15) node{\footnotesize\y};
    \ifnum0=\y {} \else {
      \filldraw[thick,fill=gray!25] (\x-1,0) rectangle (\x,\y);
    } \fi
  }
}
\newcommand{\plotptradius}{0.275}
\newcommand{\plotpt}[3][] 
{ \fill[#1,radius=\plotptradius] (#2,#3) circle; }
\newcommand{\plotpermnobox}[3][]  
{
  \foreach \y [count=\x] in {#3}
  {
    \ifnum0=\y {} \else {
      \plotpt[#1]{\x}{\y}
    } \fi
  }
}
\newcommand{\plotperm}[3][]  
{
  \plotpermnobox[#1]{#2}{#3}
  \draw[thick] (.5,.5) rectangle (#2.5,#2.5);
}
\newcommand{\plotpermnums}[4][]  
{
  \plotperm[#1]{#2}{#3}
  \foreach \y [count=\x] in {#4}
  {
    \draw (\x,-.5) node{\footnotesize\y};
  }
}
\newcommand{\plotpermgrid}[3][]  
{
  \foreach \x in {1,...,#2} \draw[very thin] (\x,.5)--(\x,#2.5) (.5,\x)--(#2.5,\x);
  \plotperm[#1]{#2}{#3}
}
\newcommand{\circpt}[3][] 
{ \draw[#1,radius=\plotptradius+0.075] (#2,#3) circle; }
\title{\textbf{Thresholds for patterns in random permutations\\with a given number of inversions}}
\author{David Bevan${}^\dagger$ and Dan Threlfall${}^{\dagger\ddag}$}
\date{}
\begin{document}
\maketitle

{\begin{NoHyper}
\let\thefootnote\relax\footnotetext
{${}^\dagger$Department of Mathematics and Statistics, University of Strathclyde, Glasgow, Scotland.}
\let\thefootnote\relax\footnotetext
{${}^\ddag$The second author was supported by an EPSRC Mathematical Sciences Studentship.}
\end{NoHyper}}

{\begin{NoHyper}
\let\thefootnote\relax\footnotetext
{2020 Mathematics Subject Classification:
05A05, 
60C05. 
}
\end{NoHyper}}

\begin{abstract}
\noindent
We explore how the asymptotic structure of a random permutation of $[n]$ with $m$ inversions evolves, as $m$ increases,
establishing thresholds for the appearance and disappearance of any classical, consecutive or vincular pattern.
The threshold for the appearance of a classical pattern depends on the greatest number of inversions in any of its sum indecomposable components.
\end{abstract}

\section{Introduction}\label{sectIntro}

We consider permutations
from an evolutionary perspective, in an analogous manner to the Gilbert--Erd\H{o}s--R\'enyi random graph~\cite{ER1959,ER1960,Gilbert1959}.
Our model, which we call the \emph{uniform random permutation}, and denote $\snm$, is a permutation drawn uniformly from the set of permutations of $[n]$ with exactly $m$ inversions.
We are interested in how, for large $n$, the structure of $\snm$ evolves as the number of its inversions $m$ increases from zero to~$\binom{n}2$.
Specifically, for any classical, consecutive or vincular pattern $\pi$, we establish thresholds for the
appearance and disappearance of $\pi$ in~$\snm$.
These results build on our previous work~\cite{BTCompositions} on thresholds for patterns in random compositions.

A \emph{permutation} or \emph{n-permutation} is considered to be simply an arrangement of the numbers $[n]:=\{1, 2,\ldots , n\}$ for some positive $n$.
Let $\SSS_n$ denote the set of all $n$-permutations.
We often display an $n$-permutation $\sigma$ using its \emph{plot}, the set of points $(i,\sigma(i))$ in the Euclidean plane, for $i = 1,\dots ,n$.
Sometimes we identify a permutation with its plot.
If $\sigma$ is an $n$-permutation, we define its \emph{complement},  denoted $\overline{\sigma}$, to be the permutation such that $\overline{\sigma}(i) = n + 1 - \sigma(i)$ for every $i \in [n]$.
Thus the plot of $\overline{\sigma}$ is the reflection of the plot of $\sigma$ about a horizontal axis.
See Figure~\ref{figPermPlot} for the plots of an $8$-permutation and its complement.

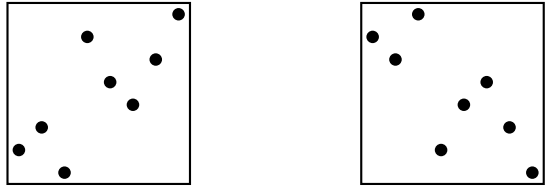
\begin{figure}[t]
  \centering
  \begin{tikzpicture}[scale=0.3]
  \plotperm{8}{2,3,1,7,5,4,6,8}
  \end{tikzpicture}
   \hspace{2cm}
  \begin{tikzpicture}[scale=0.3]
  \plotperm{8}{7,6,8,2,4,5,3,1}
  \end{tikzpicture}
  \caption{The permutation $\sigma = 23175468$ and its complement $\overline{\sigma} = 76824531$}\label{figPermPlot}
\end{figure}

An \emph{inversion} in a permutation $\sigma  \in \SSS_n$ is a pair $i,j \in [n]$ such that $i < j$ and $\sigma(i) > \sigma(j)$.
We use $\inv(\sigma)$ to denote the total number of inversions in the permutation $\sigma$,
and use \[\ssnm \eq \{\sigma \in \SSS_n \::\: \inv(\sigma) = m\}\] to denote the set of $n$-permutations with exactly $m$ inversions.
Thus $\boldsymbol\sigma_{n,m}$ is a permutation chosen uniformly from $\ssnm$.
The greatest possible number of inversions that can occur in an $n$-permutation is $\binom{n}{2}$.
Note that $\inv(\overline{\sigma}) = \binom{n}{2} - \inv(\sigma)$.
Thus, $\snm[\binom{n}2-m]$ has the same distribution as $\overline{\snm}$.

We consider three different forms of permutation pattern containment.
For a very brief introduction to permutation patterns, see~\cite{BevanPPBasics}; for more extended expositions, see either B\'ona~\cite{BonaBook} or Kitaev~\cite{KitaevBook}.

A $k$-permutation $\pi$ occurs as a \emph{consecutive pattern} at position $j$ in a permutation $\sigma$ if the consecutive subsequence $\sigma(j)\ldots\sigma(j+k-1)$ has the same relative ordering as $\pi$.
For example, the consecutive pattern $132$ occurs three times in the permutation at the left of Figure~\ref{figPermPlot}, at positions 1, 3 and~7.
A permutation that doesn't contain a pattern is said to \emph{avoid} it.
See~\cite{Elizalde2016,EN2003,EN2012} for investigations of permutations avoiding consecutive patterns.

A permutation $\pi$ occurs as a \emph{classical pattern} in $\sigma$ if $\sigma$ has a (not necessarily consecutive) subsequence whose terms have the same relative ordering as $\pi$.
For example, the classical pattern $312$ occurs twice in the permutation at the left of Figure~\ref{figPermPlot},
one occurrence consisting of the points
at positions 4, 5 and~7, and the other consisting of the points
at positions 4, 6 and~7.
For an extensive survey on classical patterns, see Vatter~\cite{VatterSurvey}.

Finally, in a \emph{vincular pattern} (see~\cite{BS2000,BP2012,BCKPosets,BCDK2010,Claesson2001,Elizalde2006,Hofer2018}) only some terms  are required to be adjacent.
Consecutive terms in a vincular pattern that must be adjacent are underlined.
For example, the vincular patterns $\underline{31}2$ and $3\underline{12}$ each occur once in the permutation at the left of Figure~\ref{figPermPlot},
the former consisting of the points
at positions 4, 5 and~7, and the latter consisting of the points
at positions 4, 6 and~7.

We take a dynamic (or evolutionary) view by considering a process on $n$-permutations, namely a sequence of permutations $\sigma_{0},\sigma_{1},\sigma_{2},\dots,\sigma_{\binom{n}{2}}$, where $\sigma_{t+1}$ is obtained from $\sigma_{t}$, by the addition of one inversion.
In this context, a striking phenomenon is the abrupt appearance and disappearance of substructures.
To quantify this, we introduce the concept of a threshold.
A function $m^\star = m^\star(n)$ is a threshold in $\snm$ for
(the appearance of)
a property $\QQQ$ of permutations if
\[
\liminfty \prob{\snm \text{ satisfies }\QQQ} \eq
\begin{cases}
0 & \text{if~ $m\ll m^\star$,} \\
1 & \text{if~ $m^\star\ll m\ll m^+$,}
\end{cases}
\]
for some function $m^+\gg m^\star$,
where, throughout this work,
we write $f \ll g$ to denote that $\liminfty f/g =0$.
We also write $f \sim g$ if $\liminfty f/g =1$.

We say that a property holds \emph{asymptotically almost surely} (a.a.s) if asymptotically the probability that it holds equals $1$.
Thus, above its threshold, 
a.a.s. $\QQQ$ holds,
whereas below its threshold, a.a.s. $\QQQ$ does not hold.

With a slight abuse of terminology, we also say that $\binom{n}2-m\sim m^\star$ is a threshold in $\snm$ for
the \emph{disappearance} of
a property $\QQQ$ if
\[
\liminfty \prob{\snm \text{ satisfies }\QQQ} \eq
\begin{cases}
    1  & \text{if~ $m^\star \ll \binom{n}{2} - m \ll m^+$,}\\
    0  & \text{if~ $\binom{n}{2} - m \ll m^\star$,}
  \end{cases}
\]
for some function $m^+\gg m^\star$.
We determine the thresholds for the appearance and disappearance of patterns in~$\snm$.  

\subsection*{Consecutive patterns}

For a \emph{consecutive} pattern, these thresholds depend on the number of inversions in the pattern, and on the number of inversions in its complement, respectively.
Specifically, we have the following result.

\begin{thm}\label{thmMainResultConsec}
Let $\pi$ be any consecutive permutation pattern of length $k$.
If $s=\inv(\pi)$ and $s'=\inv(\overline{\pi})$, then for any positive constant~$a$,
\begin{align*}
\liminfty\prob{\snm \text{ contains }\pi} &\eq \begin{cases}
    0 & \quad \text{if~ } m \ll n^{1-1/s},\\
    1-e^{-a^{s}} & \quad \text{if~ $m\sim a n^{1-1/s}$} \text{~and~} s>1,\\
    1 & \quad \text{if~ } m \sim a \text{~and~} s=1,\\
    1 & \quad \text{if~ } n^{1 - 1/s} \ll m \ll n^{1+1/k},
  \end{cases}
\\[3pt]
\liminfty\prob{\snm \text{ contains }\pi} &\eq \begin{cases}
    1 & \quad  \text{if~ } n^{1+1/k} \gg \binom{n}{2} - m \gg n^{1-1/s'},\\
    1 & \quad \text{if~ } \binom{n}{2} - m \sim a \text{~and~} s=1,\\
    1-e^{-a^{s'}} & \quad \text{if~ $\binom{n}{2} - m\sim a n^{1-1/s'}$} \text{~and~} s>1, \\
    0 & \quad \text{if~ } \binom{n}{2} - m \ll n^{1-1/s'},
  \end{cases}
\end{align*}
as long as $s>0$ and $s'>0$, respectively.
\end{thm}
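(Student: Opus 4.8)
The plan is to count consecutive occurrences. Set $X=X^{(\pi)}=\sum_{j=1}^{n-k+1}X_j$, where $X_j$ is the indicator that $\snm(j)\,\snm(j{+}1)\cdots\snm(j{+}k{-}1)$ is order-isomorphic to $\pi$, so that $\{\snm\text{ contains }\pi\}=\{X\geqs1\}$. Complementation sends an $n$-permutation with $m$ inversions to one with $\binom n2-m$ inversions and a consecutive occurrence of $\pi$ to one of $\overline\pi$, which has $\inv(\overline\pi)=\binom k2-s=s'$; since $\overline{\snm}$ and $\snm[\binom{n}{2}-m]$ are equidistributed, the second displayed formula follows by applying the first to $\overline\pi$, so it suffices to prove the first. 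Everything then reduces to two estimates extracted from the inversion generating function $[n]_q!:=\sum_{\sigma\in\SSS_n}q^{\inv\sigma}=\prod_{i=1}^n(1+q+\cdots+q^{i-1})$. Splitting an $n$-permutation according to the set $V$ of values inside a fixed length-$k$ window, the set $L$ of values to its left and the set $R$ of values to its right, the number of inversions equals $s$ (inside the window, which must realise $\pi$) plus the internal inversions of the $L$-part and of the $R$-part plus three cross-terms depending only on $(L,V,R)$; summing over $(L,V,R)$ with the internal inversions free yields, for positions $j$ in the bulk (say $\ceil{n^{1-1/(2s)}}\leqs j\leqs n-\ceil{n^{1-1/(2s)}}$), the exact identity
\[
\prob{X_j=1}\eq\frac1{I_n(m)}\sum_{(L,V,R)}\bigl[\,q^{\,m-s-\mathrm{cross}(L,V,R)}\,\bigr]\;[\,|L|\,]_q!\;[\,|R|\,]_q!,
\]
where $I_n(m):=[q^m]\,[n]_q!=|\ssnm|$ is the Mahonian number. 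The dominant summand is the unique one with no cross-inversions — a single copy of $\pi$ placed in position, flanked by near-identities — and a Chu--Vandermonde convolution together with the expansion $I_N(M)=\binom{N+M-2}{M}\bigl(1+O(M^2/N^2)\bigr)$, valid when $M\ll N$, reduces it, uniformly over bulk $j$ and for $m=o(n)$, to $\prob{X_j=1}=(1+o(1))\,(m)_s/n^s$ with $(m)_s:=m(m{-}1)\cdots(m{-}s{+}1)$; a cruder version, $\prob{X_j=1}=O((m)_s/n^s)$, holds for every $j$, so the $o(n)$ non-bulk positions are negligible. Hence the \emph{first estimate},
\[
\expec X\eq(1+o(1))\,\frac{(m)_s}{n^{\,s-1}}\eq(1+o(1))\Bigl(\frac m{n^{1-1/s}}\Bigr)^{\!s}\qquad(m=o(n)).
\]
Running the same decomposition with $r$ windows at pairwise distance $\geqs k$ gives the \emph{second estimate}: for such well-separated bulk positions $\prob{X_{j_1}=\cdots=X_{j_r}=1}=(1+o(1))\bigl((m)_s/n^s\bigr)^r$, and for all $m\ll n^{1+1/k}$ one has, uniformly, $\prob{X_{j_1}=X_{j_2}=1}\leqs(1+o(1))\prob{X_{j_1}=1}\prob{X_{j_2}=1}$ whenever $|j_1-j_2|\geqs k$.

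The subcritical regime, and the trivial half of every other case, follow from Markov's inequality: if $m\ll n^{1-1/s}$ then $\expec X\to0$, so $\prob{X\geqs1}\to0$. For the critical window $m\sim a\,n^{1-1/s}$ with $s\geqs2$ (so $m=o(n)$) the plan is to prove $X\xrightarrow{d}\mathrm{Poisson}(a^s)$ via factorial moments, $\expec{X(X{-}1)\cdots(X{-}r{+}1)}\to(a^s)^r$ for each fixed $r$. The first estimate gives $\expec X\to a^s$; the second handles the $(1+o(1))\,n^r$ ordered tuples of well-separated positions, which contribute $(1+o(1))(a^s)^r$. The remaining tuples — those with two windows at distance $<k$ — are killed by the combinatorial fact that \emph{any pattern of length $k+d$ whose first $k$ and whose last $k$ entries are each order-isomorphic to $\pi$ has at least $s+1$ inversions}, for every $1\leqs d\leqs k-1$ (given $s\geqs1$): if it had only $s$ then every position outside $[d{+}1,k]$ would lie in no inversion and hence be a fixed point, so the pattern would equal $\mathrm{id}_d\oplus\rho\oplus\mathrm{id}_d$ with $\mathrm{id}_d\oplus\rho\cong\pi\cong\rho\oplus\mathrm{id}_d$, forcing $\rho=\mathrm{id}$ and $s=0$. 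Consequently a cluster of $c\geqs2$ pairwise-overlapping windows uses $\geqs s+1$ inversions while spanning only $O(1)$ positions, so an $r$-tuple containing at least one such cluster contributes $O\bigl((a^s)^{r}\,m/n\bigr)=o(1)$ since $m/n=an^{-1/s}\to0$; summing over the $O(1)$ cluster shapes gives $\expec{X(X{-}1)\cdots(X{-}r{+}1)}\to(a^s)^r$, hence $\prob{X\geqs1}\to1-e^{-a^s}$.

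The case $s=1$ behaves differently — the occurrences are then essentially deterministic rather than Poissonian — and is treated directly. When $s=1$, $\pi$ is the identity with a single adjacent transposition, at some position $i$; using the inversion table (the standard bijection $\ssnm\leftrightarrow\{(c_1,\dots,c_n):0\leqs c_\ell\leqs\ell-1,\ \sum_\ell c_\ell=m\}$, under which $\snm$ is uniform), if $m\sim a$ is constant then a.a.s.\ every $c_\ell\leqs1$ and the indices with $c_\ell=1$ are pairwise non-adjacent, so $\snm$ is a.a.s.\ a product of $m\geqs1$ isolated adjacent transpositions; the length-$k$ window placed with offset $i$ on any one of them is a.a.s.\ clean and is therefore $\cong\pi$, whence $\prob{X\geqs1}\to1$. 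For the supercritical regime $n^{1-1/s}\ll m\ll n^{1+1/k}$ (all $s\geqs1$) the plan is the second-moment method: $\expec X\to\infty$ — by the first estimate when $m=o(n)$, and because the pattern of a random length-$k$ window becomes asymptotically uniform, so $\expec X=(1+o(1))\,n/k!\to\infty$, when $m/n\to\infty$ — while $\var X=o(\expec X^2)$, the contribution of well-separated pairs being controlled by the non-positive-correlation bound of the second estimate and that of the $O(n)$ close pairs crudely (using the overlap fact above, which forces $\geqs s+1$ inversions and hence a saving of a factor $m/n$, when $m=o(n)$, and the trivial bound $\prob{X_{j_1}=1}=O(1)$ otherwise); then $\prob{X=0}\leqs\var X/\expec X^2\to0$.

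The principal obstacle is establishing the two key estimates \emph{uniformly} over the whole range $m\ll n^{1+1/k}$. When $m=o(n)$ only the factors $1+q+\cdots+q^{i-1}$ with $i\leqs m$ are relevant modulo $q^{m+1}$, and the required asymptotics — with explicit $(1+o(1))$ error — fall out of the congruence $[n]_q!\equiv\prod_{i\geqs2}(1-q^i)\big/(1-q)^{\,n-1}\pmod{q^{m+1}}$ and its analogue for the products $[\,|L|\,]_q!\,[\,|R|\,]_q!$; but once $m$ reaches order $n$ and beyond, all $n$ factors interact, and one must instead extract and compare coefficients of $[n]_q!$ (and of such products) by a saddle-point analysis precise enough to retain the $(1+o(1))$ needed for the Poisson limit and the non-positive-correlation bound, and to show both that a random length-$k$ window's pattern is asymptotically uniform and that $\prob{X_j=1}$ is bounded below on the bulk. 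I expect this uniform control of the Mahonian-type coefficients, rather than the probabilistic machinery surrounding it, to be the crux; the overlap fact, by contrast, is the short combinatorial lemma sketched above.
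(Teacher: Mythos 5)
Your complementation reduction, your exact expression for $\prob{X_j=1}$ as a sum of Mahonian-type coefficients over triples $(L,V,R)$, and your overlap lemma (two occurrences of $\pi$ at distance less than $k$ force at least $s+1$ inversions) are all correct, and for $m=o(n)$ the first- and second-moment computations you outline are plausibly completable, since the asymptotics of $|\ssnm|$ are known for $m\leqs n$. This is a genuinely different route from the paper, which never needs $|\ssnm|$ at all: it passes to the inversion sequence $\enm$ (Propositions~\ref{propSnmShiftInvariance}--\ref{propSnmtoEnm}), observes that $\enm$ is the uniform composition $\cnm$ conditioned on the event $\{C(i)<i\text{ for all }i\}$, which is a.a.s.\ vacuous when $m\ll n$ (Proposition~\ref{propCnmAnInvSeq}), imports the exact-pattern threshold for $\cnm$ from the companion paper on compositions (Proposition~\ref{propThreshCnmContainsExactPatt}, itself proved by transfer from the i.i.d.\ geometric model), and reaches $n\ll m\ll n^{1+1/k}$ by discarding a short prefix of $\enm$ after bounding its maximum term (Propositions~\ref{propCnmMax} and~\ref{propCnmToEnm}).

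The genuine gap is exactly where you locate it, and it cannot be deferred: the supercritical regime of the theorem runs up to $m\ll n^{1+1/k}$, which for every $k\geqs2$ contains the range $n\ll m\ll n^{3/2}$, and there the asymptotics of $I_n(m)=|\ssnm|$ --- let alone the uniform $(1+o(1))$ control of the restricted convolutions $\sum_{(L,V,R)}[q^{m-s-\mathrm{cross}}]\,[\,|L|\,]_q!\,[\,|R|\,]_q!$ and the claimed non-positive-correlation bound for pairs of windows --- are not available: the paper itself records that the magnitude of $|\ssnm|$ has not been established for $n\ll m\ll n^2$, so the ``saddle-point analysis'' you invoke is an open problem rather than a routine step. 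Your fallback for $m/n\to\infty$, that the pattern of a random length-$k$ window becomes asymptotically uniform so that $\expec{X}\sim n/k!$, is likewise unproven; determining when such local uniformity holds is posed as an open question in Section~\ref{sectGapMallows}, having been ``addressed unsuccessfully'' in a withdrawn preprint. As written, then, your argument establishes the subcritical and critical cases and the supercritical case only for $m=o(n)$; to cover the full range $n^{1-1/s}\ll m\ll n^{1+1/k}$ you would either have to prove new coefficient asymptotics for $[n]_q!$ or adopt the paper's composition-transfer strategy, which sidesteps them entirely.
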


For example, the threshold for the appearance of consecutive pattern $2143$ in $\snm$ is $m\sim \sqrt{n}$, and the threshold for its disappearance is $\binom{n}2-m\sim n^{3/4}$.

Unfortunately, our methods do not enable us to show that $\snm$ a.a.s. contains a given pattern for all values of $m$ between the thresholds for its appearance and disappearance.
We defer further discussion of this challenge to Section~\ref{sectGapMallows}.

\begin{figure}[ht]
\centering
\begin{tikzpicture}[scale=0.3]
\plotperm{8}{2,3,1,7,5,4,6,8}
{\draw[thin] (0.5,0.5) rectangle (3.5,3.5);}
{\draw[thin] (3.5,3.5) rectangle (7.5,7.5);}
{\draw[thin] (7.5,7.5) rectangle (8.5,8.5);}
\end{tikzpicture}
\hspace{2cm}
\begin{tikzpicture}[scale=0.3]
\plotperm{8}{7,6,8,2,4,5,3,1}
\end{tikzpicture}
\caption{The sum decomposition of a decomposable permutation, and an indecomposable permutation}
\label{figDecompPerm}
\end{figure}
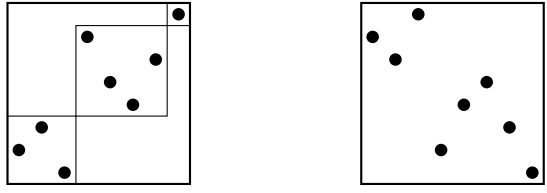

\subsection*{Classical patterns}

Given an $n$-permutation $\sigma$, we say that it is \emph{decomposable} if there exists some $k < n$ such that
\[\{\sigma(1),\sigma(2),\dots,\sigma(k)\} \eq \{1,2,\dots,k\}.\]
If a permutation is not decomposable, we say it is \emph{indecomposable}.
For example, Figure~\ref{figDecompPerm} displays the plot of a decomposable permutation at the left and the plot of an indecomposable permutation at the right.
Any permutation that is decomposable can be expressed as the combination of two or more shorter permutations. Given two permutations $\sigma$ and $\tau$ with lengths $k$ and $\ell$ respectively, their \emph{direct sum} $\sigma \oplus \tau$ is the permutation of length $k + \ell$ consisting of $\sigma$ followed by a shifted copy of~$\tau$:
\[(\sigma \oplus \tau)(i)\eq\begin{cases}
\sigma(i) \qquad &\text{ if~ } i \leqs k,\\
k + \tau(i-k) &\text{ if~ } k+1 \leqs i \leqs k + \ell.
\end{cases}\]
For example, the permutation at the left of Figure~\ref{figDecompPerm} is $231 \oplus 4213 \oplus 1 $.
Every permutation has a unique representation as the direct sum of a sequence of one or more indecomposable permutations, which we call its \emph{components}.
This representation is known as its \emph{sum decomposition}.
Note that the complement of a decomposable permutation (with more than one component) is indecomposable (having only one component),
as illustrated in Figure~\ref{figDecompPerm}:
the permutation at the right is the complement of the permutation at the left.

The threshold for the appearance of a \emph{classical} pattern depends on the greatest number of inversions in one of its components, and
the threshold for its disappearance depends on the greatest number of inversions in a component of its complement.

\begin{thm}\label{thmMainResultClassical}
Let $\pi$ be any classical permutation pattern.
If $s$ is the greatest number of inversions in a component of $\pi$,
and $s'$ is the greatest number of inversions in a component of $\overline{\pi}$,
then
\begin{align*}
\liminfty\prob{\snm \text{ contains }\pi} &\eq \begin{cases}
    0 & \quad \text{if~ } m \ll n^{1-1/s},\\
    1 & \quad \text{if~ } n^{1 - 1/s} \ll m \ll n,
  \end{cases}
\\[3pt]
\liminfty\prob{\snm \text{ contains }\pi} &\eq \begin{cases}
    1  & \quad  \text{if~ } n \gg \binom{n}{2} - m \gg n^{1-1/s'},\\
    0       & \quad \text{if~ } \binom{n}{2} - m \ll n^{1-1/s'},
  \end{cases}
\end{align*}
as long as $s>0$ and $s'>0$, respectively.
\end{thm}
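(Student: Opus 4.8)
The plan is the following. First, the disappearance statement reduces to the appearance statement by complementation: since $\snm[\binom n2-m]$ has the same distribution as $\overline{\snm}$, and $\overline{\sigma}$ contains $\pi$ exactly when $\sigma$ contains $\overline{\pi}$, applying the appearance statement to the pattern $\overline{\pi}$ (whose heaviest component carries $s'$ inversions) yields the disappearance statement for $\pi$. So it suffices to establish the two appearance regimes.

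For the lower regime, $m\ll n^{1-1/s}$, I would use the first moment method. Let $\rho$ be a component of $\pi$ with $\inv(\rho)=s$; since $\snm\supseteq\pi$ implies $\snm\supseteq\rho$, it is enough to show $\expec{X}\to0$, where $X$ counts occurrences of $\rho$ in $\snm$. Writing $k=|\rho|$, I would need two inputs. The first is asymptotics for the number of permutations with few inversions: for $m=o(n)$ one has $|\ssnm|\sim\binom{n+m-1}{m}$, and, more to the point, $|\SSS_{n-k,\,m-s}|\,/\,|\ssnm|=O(m^{s}/n^{s})$, which bounds the probability that a fixed admissible $k$-set of positions induces $\rho$: peeling off the $k$ points of an occurrence leaves a permutation with at most $m-s$ inversions, since the occurrence itself carries $\ge s$. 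The second is a localisation lemma: because $\rho$ is indecomposable, for each cut of an occurrence there is an inversion of $\snm$ straddling it, and an inversion straddling a distance $d$ forces at least $d$ further inversions; hence any occurrence spans $O(m)$ consecutive positions and its points lie within $O(m)$ of the diagonal, so that "spread-out" occurrences contribute negligibly. Combining these gives $\expec{X}=O(m^{s}/n^{s-1})$, which tends to $0$ precisely when $m\ll n^{1-1/s}$.

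For the upper regime, $n^{1-1/s}\ll m\ll n$, I would first treat an \emph{indecomposable} pattern $\rho$ with $\inv(\rho)=s$, then bootstrap. For indecomposable $\rho$: the same enumeration asymptotics give $\expec{X}\asymp m^{s}/n^{s-1}\to\infty$, and because these asymptotics factorise (e.g.\ $|\SSS_{n-2k,\,m-2s}|\,|\ssnm|\,/\,|\SSS_{n-k,\,m-s}|^{2}\to1$) well-separated occurrences are asymptotically independent; a second moment argument then yields $\snm\supseteq\rho$ a.a.s., the work being to bound the contribution of pairs of occurrences lying within $O(m)$ of one another by $o(\expec{X}^{2})$, which uses $m\ll n$. (Alternatively one can induct on $n$, by finding a block of $\snm$ that, conditionally on the sum-decomposition structure, is a uniform random permutation of length $\ell\approx(n/m)^{s}$ carrying $\gg\ell^{1-1/s}$ inversions, and invoking the inductive hypothesis there.) For a general $\pi=\pi_{1}\oplus\cdots\oplus\pi_{r}$ with $s=\max_{j}\inv(\pi_{j})$: fix the $r-1$ bulk cut positions $c_{j}=\lfloor jn/r\rfloor$; by the localisation bound each $c_j$ is a sum-decomposition point of $\snm$ with probability $1-o(1)$, so a.a.s.\ all of them are, and conditionally on this $\snm=\sigma^{(1)}\oplus\cdots\oplus\sigma^{(r)}$ with the pieces independent, each $\sigma^{(j)}$ uniform over permutations of its length ($\approx n/r$) with its number of inversions, which is a.a.s.\ $\Theta(m/r)$. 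Since $m/r\gg(n/r)^{1-1/\inv(\pi_{j})}$ for every $j$, the indecomposable case gives $\sigma^{(j)}\supseteq\pi_{j}$ a.a.s., and stacking these occurrences exhibits $\pi$ in $\snm$.

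The hard part will be making the moment estimates rigorous with the correct exponents. This needs sufficiently uniform asymptotics for $|\ssnm|$ (and for ratios such as $|\SSS_{n-k,\,m-s}|/|\ssnm|$) throughout the range $m=o(n)$, together with a careful treatment of clustered occurrences: in the first moment, occurrences whose points are stretched over many positions; in the second moment, pairs of occurrences that overlap or lie close together. It is exactly in controlling these terms that the localisation structure of low-inversion permutations — that an indecomposable occurrence cannot be stretched out without forcing a number of inversions proportional to its span — must be used sharply, so that the threshold comes out as $n^{1-1/s}$ rather than something weaker.
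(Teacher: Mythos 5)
Your overall architecture is sound --- reducing disappearance to appearance via complementation is exactly right, and restricting attention to a single heaviest component for the lower bound is the correct reduction --- but your route is genuinely different from the paper's, and as written the first-moment step has a real gap. You propose to combine (i) the ratio bound $|\SSS_{n-k,m-s}|/|\ssnm|=O\big((m/n)^s\big)$ for a fixed admissible position/value configuration with (ii) a localisation lemma confining occurrences to windows of width $O(m)$ near the diagonal. These two inputs do not combine to give $\expec{X}=O(m^s/n^{s-1})$: summing the fixed-configuration bound over the roughly $n\cdot m^{2k-2}$ admissible configurations yields $O(m^{s+2k-2}/n^{s-1})$, which is too weak by polynomial factors in $m$. (Sanity check with $\rho=21$: the true count of occurrences is exactly $m$, while this combination gives order $m^2$.) What is actually needed is that an occurrence of an indecomposable pattern of width $w$ whose points are displaced from the diagonal forces at least $s+(w-k)$ inversions plus an amount growing with the displacement, so that each unit of positional or vertical spread costs an extra factor of $m/(m+n)$ and the configuration sum becomes a convergent geometric series dominated by the tight, on-diagonal term; the spread-versus-inversions tradeoff must enter the \emph{exponent}, not merely truncate the range of summation. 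The paper isolates exactly this tradeoff as Proposition~\ref{propIndecompInv} and then sidesteps the configuration sum entirely via Proposition~\ref{propIndecompConsec}: an occurrence of an indecomposable pattern with $s$ inversions forces a \emph{consecutive} pattern of length at most $ks$ with at least $s$ inversions, of which there are finitely many, so the classical lower bound reduces to the already-proved consecutive case and no asymptotics for $|\ssnm|$ are ever required.

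In the upper regime your plan is plausible but defers the genuinely hard parts. The claim that each bulk cut $c_j$ is a.a.s. a sum-decomposition point of $\snm$ when $m\ll n$ is true, but proving it needs its own first-moment argument (no inversion straddles $c_j$), and conditioning on it then requires concentration of the conditional inversion counts $(m_1,\dots,m_r)$, whose joint law is proportional to $\prod_j|\SSS_{n_j,m_j}|$ --- so you again need uniform asymptotics (or at least log-concavity) for $|\ssnm|$ throughout $m\ll n$, on top of a second-moment argument for the indecomposable base case. None of this is wrong, but each item is a substantial piece of work that your sketch asserts rather than proves. The paper's route needs none of it: it runs the concentration argument in the composition model $\cnm$, where variances are explicit, plants a consecutive occurrence of $e_{\alpha_j}$ in the $j$th block of the inversion sequence, and invokes Proposition~\ref{propPosjinInvPosjinPerm} --- a consecutive occurrence of $e_{\alpha_j}$ in $e_\sigma$ automatically has no point of $\sigma$ to its upper left --- to stack the components into an occurrence of $\pi$ without ever needing the cuts to be genuine decomposition points of $\snm$. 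If you pursue your route, the two places I would worry about most are the exponent in the first moment and the conditional product structure at the cuts.
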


For example, the threshold for the appearance of classical pattern $23175468=231\oplus 4213 \oplus 1$ (shown at the left of Figure~\ref{figDecompPerm}) in $\snm$ is $m=n^{3/4}$, since component 4213 has four inversions and neither of the other two components have more.

\subsection*{Vincular patterns}
\label{defVinc}

A \emph{vincular} pattern with sum decomposition $\alpha_1\oplus\ldots\oplus\alpha_k$,
has a unique (possibly coarser) representation as a direct sum $\beta_1\oplus\ldots\oplus\beta_\ell$ for some $\ell\leqs k$, such that
\begin{bullets}
\item[(a)] each $\beta_j=\alpha_{i_j}\oplus\alpha_{i_j+1}\oplus\ldots\oplus\alpha_{i_j+r_j}$ for some $i_j$ and $r_j$, and
\item[(b)] $\alpha_i$ and $\alpha_{i+1}$ are components of the same $\beta_j$ only if the last term of $\alpha_i$ is required to be adjacent to the first term of $\alpha_{i+1}$.
\end{bullets}
\begin{figure}[t]
\centering
\begin{tikzpicture}[scale=0.3]
\fill[gray!50] (4,.5) rectangle (5,8.5);
\fill[gray!50] (7,.5) rectangle (8,8.5);
\plotperm{8}{2,3,1,7,5,4,6,8}
{\draw[thin] (0.5,0.5) rectangle (3.5,3.5);}
{\draw[thin] (3.5,3.5) rectangle (8.5,8.5);}
\end{tikzpicture}
\hspace{2cm}
\begin{tikzpicture}[scale=0.3]
\fill[gray!50] (2,.5) rectangle (6,8.5);
\plotperm{8}{2,3,1,7,5,4,6,8}
{\draw[thin] (0.5,0.5) rectangle (7.5,7.5);}
{\draw[thin] (7.5,7.5) rectangle (8.5,8.5);}
\end{tikzpicture}
\caption{The supercomponents of vincular patterns $231\underline{75}4\underline{68}$ and $2\underline{31754}68$}
\label{figSupercomp}
\end{figure}
We say that $\beta_1,\ldots,\beta_\ell$ are the pattern's \emph{supercomponents}.
For example,
considering the permutation 
shown at the left of Figure~\ref{figDecompPerm},
the vincular pattern
$231\underline{75}4\underline{68}$ has supercomponent decomposition $231 \oplus 42135$, whereas $2\underline{31754}68$ decomposes as $2317546 \oplus 1$.
See Figure~\ref{figSupercomp} for an illustration, in which the adjacency criteria are shown by shading.

The threshold for the appearance of a vincular pattern depends on the greatest number of inversions in one of its supercomponents.

\begin{thm}\label{thmMainResultVinc}
Let $\pi$ be any vincular permutation pattern.
If $s$ is the greatest number of inversions in a supercomponent of $\pi$,
and $s'$ is the greatest number of inversions in a supercomponent of $\overline{\pi}$,
then
\begin{align*}
\liminfty\prob{\snm \text{ contains }\pi} &\eq \begin{cases}
    0 & \quad \text{if } m \ll n^{1-1/s},\\
    1 & \quad \text{if } n^{1 - 1/s} \ll m \ll n,
  \end{cases}
\\[3pt]
\liminfty\prob{\snm \text{ contains }\pi} &\eq \begin{cases}
    1  & \quad  \text{if } n \gg \binom{n}{2} - m \gg n^{1-1/s'},\\
    0  & \quad \text{if } \binom{n}{2} - m \ll n^{1-1/s'},
  \end{cases}
\end{align*}
as long as $s>0$ and $s'>0$, respectively.
\end{thm}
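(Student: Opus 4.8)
The plan is to prove the first displayed dichotomy, concerning the appearance of $\pi$, and to deduce the second, concerning its disappearance, by complementation. For the reduction, observe that complementation is a vertical reflection, so it preserves which positions are adjacent while reversing the order of the values; hence $\sigma$ contains the vincular pattern $\pi$ if and only if $\overline{\sigma}$ contains $\overline{\pi}$. As $\snm[\binom{n}{2}-m]$ has the same distribution as $\overline{\snm}$, this gives $\prob{\snm\text{ contains }\pi}=\prob{\snm[\binom{n}{2}-m]\text{ contains }\overline{\pi}}$, and the disappearance dichotomy follows by applying the appearance dichotomy to $\overline{\pi}$ (whose supercomponents have greatest inversion count $s'$) with $\binom{n}{2}-m$ in place of $m$. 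From now on write $\pi=\beta_1\oplus\ldots\oplus\beta_\ell$ for the supercomponent decomposition and fix a supercomponent $\beta^\star$ with $\inv(\beta^\star)=s$.

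In the regime $m\ll n^{1-1/s}$ I would argue by a first moment. If $\snm$ contains $\pi$ then it contains each $\beta_j$ as a vincular pattern: restricting an occurrence of $\pi$ to the points in the role of $\beta_j$ yields an occurrence of $\beta_j$, and every adjacency required within $\beta_j$ is one required in $\pi$, hence realised by consecutive positions of $\snm$. Thus $\prob{\snm\text{ contains }\pi}\leqs\prob{\snm\text{ contains }\beta^\star}$, and it suffices to bound the right-hand side. Write $\beta^\star=\gamma_1\oplus\ldots\oplus\gamma_p$ for the sum indecomposable components of $\beta^\star$. By the definition of a supercomponent, every bridge is underlined, so an occurrence of $\beta^\star$ in $\snm$ consists of occurrences of $\gamma_1,\ldots,\gamma_p$ arranged in a staircase in which the last point of the copy of $\gamma_i$ lies immediately to the left of the first point of the copy of $\gamma_{i+1}$. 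Since $m\ll n$, the permutation $\snm$ is close to the identity, and mirroring the first moment computation behind Theorem~\ref{thmMainResultClassical} — now with the underlined bridges linking the component occurrences, so that the choice of their positions contributes a single factor of $n$ rather than one factor per component — one obtains
\[
\expec{\text{number of occurrences of }\beta^\star\text{ in }\snm}\;=\;O\big((m/n^{1-1/s})^{s}\big)\;\longrightarrow\;0 .
\]
(When $p=1$, so that $\beta^\star$ is a single indecomposable permutation $\gamma$ with $\inv(\gamma)=s$, this already follows from Theorem~\ref{thmMainResultClassical}, since a vincular occurrence of $\gamma$ is in particular a classical one and $s$ is the greatest inversion count in a component of $\gamma$.) Hence $\prob{\snm\text{ contains }\pi}\to0$.

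In the regime $n^{1-1/s}\ll m\ll n$ I would partition the positions $[n]$ into $\ell$ disjoint intervals $I_1<\ldots<I_\ell$, each of length comparable to $n/\ell$, separated by buffer intervals of length comparable to $n/\ell$. Because an inversion spanning distance $d$ forces at least $d$ inversions in total, and $m\ll n$ so that eventually $m$ is less than a buffer length, no inversion of $\snm$ runs between two different intervals $I_j$; consequently the value sets $W_j$ of the blocks satisfy $W_1<\ldots<W_\ell$. Moreover, conditionally on a likely event, the restriction of $\snm$ to $I_j$ behaves like a uniform random permutation with a number $M_j$ of inversions that concentrates, so that a.a.s.\ $M_j\gtrsim m/\ell$ for every $j$; I would take this approximation of restrictions of $\snm$ to long intervals by uniform random permutations with comparable inversion counts from the structural analysis of $\snm$ for $m\ll n$ underlying Theorems~\ref{thmMainResultConsec} and~\ref{thmMainResultClassical}. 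For each $j$ with $s_j:=\inv(\beta_j)>0$ we then have $M_j\gtrsim m/\ell\gg n^{1-1/s}\geqs (n/\ell)^{1-1/s_j}$ and $M_j\leqs m\ll (n/\ell)^{1+1/|\beta_j|}$, so Theorem~\ref{thmMainResultConsec} applied inside $I_j$ shows that a.a.s.\ $\snm$ restricted to $I_j$ contains $\beta_j$ as a \emph{consecutive} pattern (when $s_j=0$ the supercomponent $\beta_j$ is increasing and a suitable increasing run is found a.a.s.\ instead). A consecutive occurrence satisfies every adjacency constraint of $\beta_j$, and consecutiveness within the interval $I_j$ is the same as consecutiveness in $\snm$; a union bound over the constantly many $j$ then gives that a.a.s.\ $\snm$ contains a consecutive occurrence of $\beta_j$ inside $I_j$, for every $j$. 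Since $I_1<\ldots<I_\ell$ and $W_1<\ldots<W_\ell$, these $\ell$ occurrences together form an occurrence of $\beta_1\oplus\ldots\oplus\beta_\ell=\pi$ in which all of its adjacency constraints hold, so $\prob{\snm\text{ contains }\pi}\to1$.

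The main obstacle is the first moment estimate for a general supercomponent $\beta^\star$ with several components: one must check that the underlined bridges glue the component occurrences together tightly enough that the expected number of occurrences of $\beta^\star$ still scales as $(m/n^{1-1/s})^{s}$, rather than growing faster because its components could be placed more freely. This requires the precise description of the inversion structure of $\snm$ for $m\ll n$ developed for the earlier theorems — in particular, bounds on the number of occurrences of an indecomposable pattern with a prescribed final position, quasi-independence of occurrences in disjoint ranges of positions, and the fact that, near the identity, the ordering of values between bridged pieces is essentially automatic. The approximation of $\snm$ restricted to an interval by a uniform random permutation with comparably many inversions, used above the threshold, is a secondary technical point available from the same machinery.
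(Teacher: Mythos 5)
There is a genuine gap, and you have in fact flagged it yourself: the entire below-threshold case rests on the claim that the expected number of occurrences of a supercomponent $\beta^\star$ in $\snm$ is $O\big((m/n^{1-1/s})^{s}\big)$, and this estimate is asserted rather than proved. A direct first-moment computation on $\snm$ is not available off the shelf: the entries of $\snm$ are not independent, a vincular occurrence of $\beta^\star$ is not consecutive (each indecomposable piece may be spread over an arbitrarily wide window, with only the bridges pinned down), and controlling the sum over all widths requires exactly the observation that each extra column inside the span of an indecomposable piece costs at least one additional inversion. The paper's proof of Theorem~\ref{thmMainResultVinc} avoids any such computation: it proves supercomponent analogues of Propositions~\ref{propIndecompInv} and~\ref{propIndecompConsec}, showing that \emph{any} occurrence of a supercomponent with $s$ inversions forces a \emph{consecutive} pattern of bounded length ($\leqs ks$) with at least $s$ inversions; since there are only finitely many such consecutive patterns and each has appearance threshold at least $n^{1-1/s}$ by Theorem~\ref{thmMainResultConsec}, a.a.s.\ none occurs when $m\ll n^{1-1/s}$. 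You should either prove the supercomponent version of Proposition~\ref{propIndecompInv} (the key point being that, because the bridges are underlined, every extraneous column of the window lies inside the span of some indecomposable $\alpha_i$ and hence contributes an inversion) or carry out the moment bound in full; as written, the crucial step is missing.

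The above-threshold half is closer to being complete but also leans on machinery the paper does not supply in the form you invoke. You condition on a likely event under which the restriction of $\snm$ to an interval $I_j$ "behaves like a uniform random permutation with $M_j$ inversions that concentrates", attributing this to the analysis behind Theorems~\ref{thmMainResultConsec} and~\ref{thmMainResultClassical}; but the concentration established there is for block weights of the uniform \emph{composition} $\cnm$ (via an explicit covariance computation), and the transfer to $\snm$ goes through Propositions~\ref{propCnmAnInvSeq} and~\ref{propPosjinInvPosjinPerm}, not through a conditional description of interval restrictions of $\snm$. The paper instead partitions the composition into $r$ blocks, finds a consecutive occurrence of $e_{\alpha_j}$ in each, and uses the "no point to the upper left" clause of Proposition~\ref{propPosjinInvPosjinPerm} to glue these into an occurrence of the direct sum --- which plays the role of your buffer-interval argument. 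Your buffers and the deterministic "no inversion of width exceeding $m+1$" observation are a legitimate alternative for forcing $W_1<\ldots<W_\ell$, and applying Theorem~\ref{thmMainResultConsec} inside each interval correctly produces the supercomponents consecutively (hence with all adjacency constraints satisfied); but the conditional-uniformity and concentration claims for the $M_j$ need to be proved, not cited.
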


\subsection*{Background}

There has not been a great deal of previous study of the structure of permutations with a given number of inversions.
Even the magnitude of $|\ssnm|$ appears not to have been established for all ranges of~$m$.
{Comtet}~\cite[Section~7.4]{Comtet1974} proves that the number of inversions in a random $n$-permutation satisfies a central limit theorem
(see also~\cite{Bender1973,Fulman2004}).
Asymptotics for $|\ssnm|$ have been determined when $m\leqs n$ (see~\cite{Margolius2001}), when $m\sim a n$ (see~\cite{Clark2000,LP2003}), and when $m\sim a n^2$ (see~\cite{LP2003}).
However, the gap $n\ll m\ll n^2$ seems not have been investigated.

Apart from the flawed preprint~\cite{BevanLocallyUniform}, the only prior work specifically on $\snm$
of which we are aware
is that of Acan and Pittel~\cite{AP2013}.
Their primary result is a determination of the (sharp) threshold at which $\snm$ becomes indecomposable --- at $m\sim(6/\pi^2)n\log n$. 
They make use of an implicitly defined Markov process that produces $\snm[m+1]$ from $\snm$. 
No {explicit} model of this evolutionary process is known.
{Kenyon, Kr\'al', Radin and Winkler}~\cite{KKRW2020} 
compute the limit shapes of permutations when $m\sim a n^2$, 
thus making it possible to determine the expected density of any classical pattern in $\snm$ in this range.

The structure of a random permutation $\s_n$ drawn uniformly from $\SSS_n$ has been rather better studied.
We mention just a few results.
Janson, Nakamura and Zeilberger~\cite{JNZ2015} establish asymptotic normality for the distribution of any classical pattern, a result which has been extended to every vincular pattern by Hofer~\cite{Hofer2018}.
Perarnau~\cite{Perarnau2013} investigates consecutive pattern avoidance in $\s_n$.
{Bhattacharya and Mukherjee}~\cite{BM2017} determine the number of inversions involving a given point of $\s_n$, proving convergence to a uniform distribution (over a range dependent on the position) except in the case of the central point (which satisfies a central limit theorem).
Probably the most celebrated result in this context is the establishment by {Baik, Deift and Johansson}~\cite{BDJ1999} of the limiting distribution of the length of the longest increasing subsequence in a random $n$-permutation. See~\cite{Romik2015} for an extended exposition.

\subsection*{Outline}
In Section~\ref{sectPermsInvSeqs}, we consider inversion sequences of permutations, relating the presence of a consecutive pattern in a permutation to the inversion sequences of the pattern and of the host permutation.
In Section~\ref{sectCompositions}, we apply our work on patterns in random compositions~\cite{BTCompositions} to inversion sequences and prove Theorem~\ref{thmMainResultConsec} on the thresholds for consecutive patterns.
Section~\ref{sectClassicalVinc} builds on this to prove Theorems~\ref{thmMainResultClassical} and~\ref{thmMainResultVinc} giving the thresholds for classical and vincular patterns.
Various consequences of these theorems are discussed.
Finally, in Section~\ref{sectGapMallows}, we present several open questions, including considering the challenge of bridging the gap between the thresholds for a pattern's appearance and its disappearance, and briefly discussing the relationship between $\snm$ and Mallows permutations.

\section{Permutations and inversion sequences}\label{sectPermsInvSeqs}

In the section, we introduce the representation of permutations as inversion sequences and investigate the relationship between the
containment of consecutive patterns in the
two representations.
We start with the observation that the distribution of any consecutive pattern in $\snm$ is independent of its position.
This holds for any given $n$ and $m$.
As a consequence, in subsequent arguments, we need only consider the occurrence of patterns at position 1.
This proposition first appeared in the unpublished preprint~\cite{BevanLocallyUniform}.

\begin{prop}\label{propSnmShiftInvariance}
  For any consecutive permutation pattern $\pi$ of length $k$
  and any 
  $i,j\in [n+1-k]$,
  \[
  \prob{\text{$\pi$ occurs at position $i$ in $\snm$}} \:=\: \prob{\text{$\pi$ occurs at position $j$ in $\snm$}}
  .
  \]
\end{prop}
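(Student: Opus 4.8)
The plan is to exhibit, for each pair $i,j\in[n+1-k]$, an explicit bijection on $\ssnm$ that maps the permutations in which $\pi$ occurs at position $i$ onto those in which $\pi$ occurs at position $j$, thereby showing the two events have the same cardinality and hence the same probability under the uniform measure. Since any permutation of $[n+1-k]$ can be built from adjacent transpositions, it suffices to treat the case $j=i+1$ (and its inverse), so I will describe a map that ``slides'' a window of length $k$ one step to the right while preserving the inversion count.

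The key observation is that occurrence of a consecutive pattern at position $i$ depends only on the \emph{relative order} of the $k$ entries $\sigma(i),\dots,\sigma(i+k-1)$, not on their values, and similarly the number of inversions among those $k$ entries and between them and the rest of $\sigma$ is controlled by that relative order together with the values. The cleanest way to realise the slide is to pass to inversion sequences, which the paper is about to introduce: the inversion sequence $\e(\sigma)=(e_1,\dots,e_n)$ (with $e_i$ the number of $j<i$ having $\sigma(j)>\sigma(i)$, say) has the property that $\inv(\sigma)=\sum_i e_i$, that $\snm$ corresponds to a uniformly random inversion sequence with this entry-sum, and — crucially — that the relative order of a consecutive block $\sigma(i)\dots\sigma(i+k-1)$ is determined by the corresponding block of the inversion sequence in a position-homogeneous way. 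Concretely, occurrence of $\pi$ at position $i$ becomes a condition on the sub-word $(e_i,\dots,e_{i+k-1})$ that (after accounting for the fixed ``offset'' contributed by earlier entries) does not depend on $i$. Then the desired bijection is simply a transposition of two adjacent coordinates of the inversion sequence — or, even more simply, one invokes the fact proved in the companion paper \cite{BTCompositions} that the distribution of any consecutive pattern in a random composition (here, a random bounded composition of $m$) is position-independent.

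The steps, in order, are: (1) recall the inversion-sequence encoding and the identity $\inv(\sigma)=\sum e_i$; (2) show that ``$\pi$ occurs at position $i$ in $\sigma$'' translates into a property of the block $(e_i,\dots,e_{i+k-1})$ that is invariant under shifting $i$, using that consecutive-pattern containment is a statement about relative order and that the inversion-sequence entries in a window record exactly the within-window inversion data; (3) conclude that the number of inversion sequences of the right sum with the pattern at position $i$ equals the number with it at position $j$, by an explicit swap of adjacent entries (which preserves both the sum and the multiset of ``window properties''); (4) translate back to permutations and divide by $|\ssnm|$.

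The main obstacle is step (2): the subtlety is that $e_i$ records inversions of $\sigma(i)$ with \emph{all} earlier positions, not just with the $k-1$ other positions in the window, so the block $(e_i,\dots,e_{i+k-1})$ does not obviously determine the relative order of $\sigma(i)\dots\sigma(i+k-1)$ on its own. What rescues this is that the relative order within the window is governed only by the \emph{differences} and a re-ranking of the window entries, and one must check carefully that ``$\pi$ occurs at position $i$'' can be written purely in terms of the window block in a way not referencing $i$ — this is exactly the homogeneity that makes the adjacent-swap bijection legitimate. Once this translation lemma is in hand, the rest is a one-line bijective argument; I expect the bulk of the write-up to be the careful statement and verification of that translation.
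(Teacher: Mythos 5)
Your overall strategy (a bijection on $\ssnm$ carrying occurrences at position $i$ to occurrences at position $i+1$) is the right one, and the translation lemma you flag as the main obstacle in step (2) is in fact true: the paper's appendix (Proposition~\ref{propIneqs}) shows that ``$\pi$ occurs at position $i$ in $\sigma$'' is equivalent to a fixed chain of inequalities among $e_\sigma(i),\dots,e_\sigma(i+k-1)$ whose form does not depend on $i$. The genuine gap is in step (3). A transposition of two adjacent coordinates of an inversion sequence need not yield an inversion sequence, because the defining constraint $e(\ell)<\ell$ is position-dependent: moving a large entry one place to the left can violate it. For the same reason, the fallback of citing position-independence of exact patterns in the uniform random composition $\cnm$ does not transfer to $\enm$: the constraints $e(\ell)<\ell$ destroy exchangeability, and the marginal law of the window $\big(\enm(i),\dots,\enm(i+k-1)\big)$ genuinely depends on $i$ --- already for $n=3$, $m=1$ the two inversion sequences are $(0,0,1)$ and $(0,1,0)$, so the window at positions $(1,2)$ never equals $(1,0)$ while the window at positions $(2,3)$ does with probability $\tfrac12$. (It is only the probability of the specific chain-of-inequalities event that is position-invariant, and that is exactly what needs proving.) Moreover, even setting validity aside, swapping the two entries $e_i,e_{i+1}$ cannot move an occurrence from the window $\{i,\dots,i+k-1\}$ to the window $\{i+1,\dots,i+k\}$ when $k\geqs2$, since the two windows overlap in $k-1$ positions; no rearrangement of two coordinates effects the shift.

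The missing idea is a \emph{global} inversion-preserving shift, and the paper constructs it directly on permutations rather than on inversion sequences: define $\Psi$ on $\ssnm$ by deleting the last point $\sigma_n$ and inserting a new first point at height $n+1-\sigma_n$ (adjusting the intermediate values by $\pm1$ to keep a permutation). The deleted point lay in $n-\sigma_n$ inversions as the right-hand element, the inserted point lies in $n-\sigma_n$ inversions as the left-hand element, and every other point's inversion count is unchanged, so $\Psi$ is a bijection on $\ssnm$; and it visibly moves every consecutive pattern occurrence one position to the right. Iterating $\Psi$ gives the statement. If you wanted to salvage your inversion-sequence route, you would have to transport $\Psi$ through the encoding, and the resulting map on $\eenm$ is not a coordinate permutation.
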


This result follows from the existence of an operation that removes the last point from a permutation and adds a new first point in such a way as to preserve the number of inversions. This operation shifts 
patterns rightwards.

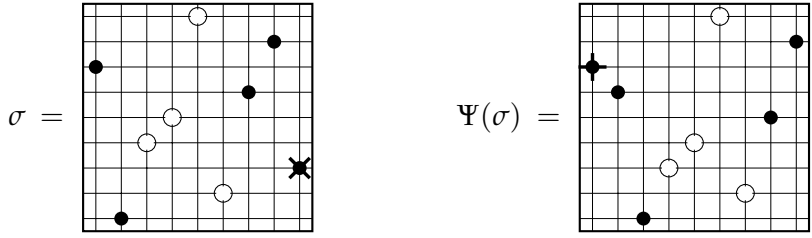
\begin{figure}[t] 
\DeclareRobustCommand{\boldplus}{\mathord{\begin{tikzpicture}\draw[line width=0.25ex, x=1.5ex, y=1.5ex] (0.5,-0.05) -- (0.5,1.05)(-0.05,0.5) -- (1.05,0.5);\end{tikzpicture}}}
\DeclareRobustCommand{\boldtimes}{\mathord{\begin{tikzpicture}\draw[line width=0.25ex, x=1.5ex, y=1.5ex] (0.1,0.1) -- (0.9,0.9)(0.1,0.9) -- (0.9,0.1);\end{tikzpicture}}}
\centering
\begin{tikzpicture}[scale=0.335]
    \draw[ line width=0.25ex] (8.6,2.6) -- (9.4,3.4);
    \draw[ line width=0.25ex] (9.4,2.6) -- (8.6,3.4);
    \plotpermgrid        {9}{7,1,0,0,0,0,6,8,3}
    \plotpermnobox[white]{9}{0,0,4,5,9,2}
    \circpt{3}{4}
    \circpt{4}{5}
    \circpt{5}{9}
    \circpt{6}{2}
    \node[left] at (.7,5) {$\sigma \:=\: {}$};
\end{tikzpicture}
$\qquad\qquad$
\begin{tikzpicture}[scale=0.335]
    \draw[ line width=0.25ex] (0.45,7) -- (1.55,7);
    \draw[ line width=0.25ex] (1,6.45) -- (1,7.55);
    \plotpermgrid        {9}{7,6,1,0,0,0,0,5,8}
    \plotpermnobox[white]{9}{0,0,0,3,4,9,2}
    \circpt{4}{3}
    \circpt{5}{4}
    \circpt{6}{9}
    \circpt{7}{2}
    \node[left] at (.7,5) {$\Psi(\sigma) \:=\: {}$};
\end{tikzpicture}
  \caption{The bijection used in the proof of Proposition~\ref{propSnmShiftInvariance}:
  the point marked $\boldtimes$ is replaced by that marked $\boldplus$;
  the consecutive pattern 2341 occurs at position~3 in $\sigma$ and at position~4 in~$\Psi(\sigma)$}.
  \label{figShiftInvariance}
\end{figure}

\together7
\begin{proof}
  As illustrated in Figure~\ref{figShiftInvariance},
  let $\Psi:\mathcal{S}_{n,m}\to\mathcal{S}_{n,m}$ be defined by
  \[
  \Psi(\sigma) \eq
  \Psi(\sigma_1\sigma_2\ldots\sigma_n) \eq \sigma' \eq \sigma'_0\sigma'_1\ldots\sigma'_{n-1} ,
  \]
  where
  $\sigma'_0=n+1-\sigma_n$, and for $1\leqs i<n$,
  $$
  \sigma'_i \eq
  \begin{cases}
  \sigma_i+1 , & \text{~if~ $\sigma'_0\leqs\sigma_i<\sigma_n$,} \\
  \sigma_i-1 , & \text{~if~ $\sigma_n<\sigma_i\leqs \sigma'_0$,} \\
  \sigma_i   , & \text{~otherwise.}
  \end{cases}
  $$
Note that $\sigma_n$ contributes $n-\sigma_n$ inversions to $\sigma$, and
$\sigma'_0$ contributes the same number of inversions to $\sigma'$.
For $0< i< n$, the point $\sigma'_i$ contributes the same number of inversions to $\sigma'$ as
$\sigma_i$ does to $\sigma$. So $\inv(\sigma')=\inv(\sigma)$.
Since $\Psi$ preserves length and has a well-defined inverse, it is a bijection on $\mathcal{S}_{n,m}$.

If $\pi$ occurs at position $j\leqs n-k$ in $\sigma$, then $\pi$ occurs at position $j+1$ in $\Psi(\sigma)$.
Hence, if $1\leqs i,j \leqs n+1-k$, then $\pi$ occurs at position $i$ in $\sigma$ if and only if $\pi$ occurs at position $j$ in~$\Psi^{j-i}(\sigma)$.
\end{proof}

\begin{figure}[ht]
  \centering
  \begin{tikzpicture}[scale=0.3]
  \plotpermnums{9}{3,1,4,8,6,2,7,5,9}{0,1,0,0,1,4,1,3,0}
  \end{tikzpicture}
  \caption{A permutation and its inversion sequence}\label{figInvSeqAtStart}
\end{figure}
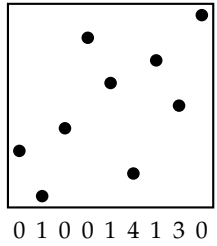

Key to our analysis is the representation of permutations as \emph{inversion sequences}.
Given an $n$-permutation $\sigma$, its {inversion sequence} $e_{\sigma}$ is the sequence of integers $\big(e_{\sigma}(j)\big)^{n}_{j=1}$, where
\[e_{\sigma}(j) \eq \big|\{i : i < j \text{ and } \sigma(i) > \sigma(j)\}\big|\]
is the number of inversions involving $\sigma(j)$ and the terms of $\sigma$ preceding $\sigma(j)$, or equivalently the number of points to the upper left of $(j,\sigma(j))$ in the plot of $\sigma$.
See Figure~\ref{figInvSeqAtStart} for an example.
Each permutation has a unique inversion sequence.
Clearly, for each $j$, we have $0\leqs e_\sigma(j)<j$, and in fact integer sequences satisfying this condition whose sum equals~$m$ are exactly the inversion sequences of $n$-permutations with $m$ inversions.
We use $\eenm$ to denote the set of such inversion sequences.

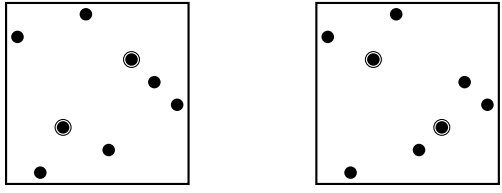
\begin{figure}[t]
\centering
\begin{tikzpicture}[scale=0.3]
\plotperm{8}{7,1,3,8,2,6,5,4}
\circpt{6}{6}
\circpt{3}{3}
\end{tikzpicture}
\qquad\qquad
\begin{tikzpicture}[scale=0.3]
\plotperm{8}{7,1,6,8,2,3,5,4}
\circpt{3}{6}
\circpt{6}{3}
\end{tikzpicture}
  \caption{The permutations $\sigma = 71382654$ and $\sigma' = 71682354$}
  \label{figAddInv}
\end{figure}

Given an inversion sequence $e$, if $e(j) < j-1$ then $e^{+j}$ denotes the inversion sequence obtained from $e$ by the addition of $1$ to its $j$th term.
Incrementing a term in the inversion sequence of a permutation just switches the values of two terms.
See Figure~\ref{figAddInv} for an example, in which $e_{\sigma} = 01103234$ and $e_{\sigma'} = e_{\sigma}^{+6} = 01103334$.

\begin{obs}\label{obsIncrPerm}
  Let $\sigma$ be a permutation.
  Suppose $e_{\sigma}(j) < j-1$, and that $\sigma'$ is the permutation with inversion sequence $e_\sigma^{+j}$.
  Let $i < j$ be the index such that
  \[
  \sigma(i) \eq \max\big\{\sigma(k) \::\: k < j \text{ ~and~ } \sigma(k) < \sigma(j)\big\} .
  \]
  Then, $\sigma'(i) = \sigma(j)$ and $\sigma'(j) = \sigma(i)$, and $\sigma'(k) = \sigma(k)$ for each $k\neq i,j$.
\end{obs}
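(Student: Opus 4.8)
The plan is to directly analyse what happens to the plot of $\sigma$ when we increment the $j$th term of its inversion sequence. First I would recall the meaning of $e_\sigma(j)$: it counts the points strictly to the upper-left of $(j,\sigma(j))$. Incrementing $e_\sigma(j)$ by $1$ must therefore produce a permutation $\sigma'$ in which exactly one more point lies to the upper-left of the point in column $j$, while all the other inversion-sequence entries are unchanged. The natural candidate is to swap the value $\sigma(j)$ with the next-larger value that currently sits to its lower-left, i.e.\ with $\sigma(i)$ where $i$ is as defined in the statement; geometrically this raises the column-$j$ point just enough to clear exactly one previously-lower point (the point in column $i$) into its upper-left quadrant.

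The key steps, in order, are as follows. (1) Note that the hypothesis $e_\sigma(j)<j-1$ guarantees that not every point in columns $1,\dots,j-1$ lies above $\sigma(j)$, so the set $\{\sigma(k): k<j,\ \sigma(k)<\sigma(j)\}$ is nonempty and $\sigma(i)$ is well defined, with $\sigma(i)<\sigma(j)$. (2) Verify that swapping the \emph{values} $\sigma(i)$ and $\sigma(j)$ (leaving every other column fixed) still gives a permutation of $[n]$ — trivially true since we just transpose two values. Call this permutation $\sigma'$; so $\sigma'(i)=\sigma(j)$, $\sigma'(j)=\sigma(i)$, and $\sigma'(k)=\sigma(k)$ otherwise. (3) Compute the inversion sequence of $\sigma'$ and check it equals $e_\sigma^{+j}$. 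This is the crux. One argues column by column: for a column $k\notin\{i,j\}$, the only way $e$ could change is if its relationship to column $i$ or column $j$ flipped; since $\sigma(i)$ and $\sigma(j)$ are consecutive among values $<\sigma(j)$ appearing before column $j$, no value $\sigma(k)$ lies strictly between them with $k<j$, and one checks that for every column $k$ the pair of points in columns $i$ and $j$ contributes the same number of ``upper-left of column $k$'' incidences before and after the swap (this uses that $\sigma(i)$ is the \emph{maximum} such value). For column $i$: since $\sigma'(i)>\sigma(i)$ and nothing to the left of $i$ lay between them, $e_{\sigma'}(i)=e_\sigma(i)$. For column $j$: the point in column $i$ has moved from below $\sigma(j)$ to above $\sigma'(j)=\sigma(i)$... wait, more carefully, $\sigma'(j)=\sigma(i)<\sigma(j)$, so column $j$'s point dropped, and exactly the points that were between $\sigma(i)$ and $\sigma(j)$ in value and sat in columns $<j$ now count as upper-left; by maximality of $\sigma(i)$ there are no such \emph{lower} points, so the only change is that column $i$ itself flips from lower-left to... — here I must be careful about the direction — and a short case check shows $e_{\sigma'}(j)=e_\sigma(j)+1$, as required. (4) By uniqueness of the inversion sequence (stated earlier in the excerpt), $\sigma'$ is \emph{the} permutation with inversion sequence $e_\sigma^{+j}$, completing the proof.

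The main obstacle is Step (3): pinning down, with a clean argument rather than an explosion of cases, why incrementing the $j$th inversion count corresponds to this particular transposition and why no \emph{other} inversion-sequence entry changes. The cleanest route is probably to work entirely in terms of the values: observe that $\sigma(i)$ and $\sigma(j)$ are ``adjacent'' in the sense that the open value-interval $(\sigma(i),\sigma(j))$ contains no value occurring in a column $<j$ and below $\sigma(j)$ — equivalently, every value in columns $1,\dots,j-1$ is either $\le\sigma(i)$ or $>\sigma(j)$ — and then note that column $j$'s point and column $i$'s point see each of the other points in the same way up to a single incidence between columns $i$ and $j$ themselves. Figure~\ref{figAddInv} already provides a worked instance ($e_\sigma=01103234$, $e_{\sigma'}=01103334$, with the swap at $\sigma(i)=3,\sigma(j)=6$), which I would cite as the guiding picture. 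I would present the value-interval observation as a named sub-claim, prove it in one line from the definition of $i$, and then dispatch the inversion-count bookkeeping in two short paragraphs (columns $\ne i,j$; columns $i$ and $j$).
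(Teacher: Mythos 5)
Your argument is correct, but there is nothing in the paper to compare it against: the statement is presented as an \emph{Observation} with no proof at all, only the preceding remark that ``incrementing a term in the inversion sequence just switches the values of two terms'' and the worked example of Figure~\ref{figAddInv}. Your proposal therefore supplies the verification the authors considered routine, and it does so along exactly the lines they evidently had in mind: define $\sigma'$ as the transposition of the values in columns $i$ and $j$, use the maximality of $\sigma(i)$ to show no value in a column left of $j$ lies strictly between $\sigma(i)$ and $\sigma(j)$, check column by column that the inversion sequence changes only in position $j$ and there by exactly $1$, and conclude by uniqueness of inversion sequences.

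Two small points of tightening for the write-up. First, your claim that ``the pair of points in columns $i$ and $j$ contributes the same number of upper-left incidences'' needs to be split into three cases rather than one: for $k<i$ neither column is to the left, so nothing changes; for $i<k<j$ only column $i$ is to the left, and its contribution is preserved precisely because $\sigma(k)$ is not in the open interval $\bigl(\sigma(i),\sigma(j)\bigr)$; for $k>j$ both columns are to the left and carry the same \emph{set} of two values before and after, so their joint contribution is unchanged regardless of the interval condition. Second, the column-$j$ bookkeeping, where your draft visibly hesitates, resolves cleanly as: $\sigma'(j)=\sigma(i)$, a column $k<j$ with $k\neq i$ counts for $e_{\sigma'}(j)$ iff $\sigma(k)>\sigma(i)$ iff $\sigma(k)>\sigma(j)$ (again by the interval condition) iff it counted for $e_\sigma(j)$, while column $i$ itself, now holding $\sigma(j)>\sigma(i)$, counts where it did not before; hence $e_{\sigma'}(j)=e_\sigma(j)+1$. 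With those two paragraphs made explicit, the proof is complete and correct.
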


If $0 \leqs m \leqs \binom{n}{2}$, then we use $\enm$ to denote an inversion sequence chosen uniformly from $\eenm$.
We call $\enm$ the \emph{uniform random inversion sequence.}
Since there is a bijection between $\eenm$ and $\mathcal{S}_{n,m}$,
we know that $\enm$ and $e_{\snm}$ have the same distribution.

If a consecutive permutation pattern $\pi$ occurs at a position $j \neq 1$ in a permutation $\sigma$, then it is not necessarily the case that $e_{\pi}$ occurs at position $j$ in $e_{\sigma}$.
However, if $\pi$ occurs at position $1$ in~$\sigma$, then $e_{\pi}$ does occur at position $1$ in $e_{\sigma}$, as we prove below.
For example, the consecutive pattern 213 occurs at positions 1, 5 and 7 in the permutation in Figure~\ref{figInvSeqAtStart}.
However $e_{213} = 010$ only occurs at position 1 (and not at positions 5 and 7) in its inversion sequence.

\begin{prop}\label{propPos1inPermPos1inInv}
Let $\pi$ be any consecutive permutation pattern.
If $\pi$ occurs at position $1$ in a permutation $\sigma$, then $e_{\pi}$ occurs at position $1$ in $e_{\sigma}$.
\end{prop}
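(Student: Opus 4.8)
The plan is to prove the (slightly stronger) statement that the length-$k$ prefix of $e_\sigma$ is literally $e_\pi$, where $k$ is the length of $\pi$; since $e_\pi$ and $e_\sigma$ both start with the term $0$, this prefix equality is exactly what it means for $e_\pi$ to occur at position~$1$ in $e_\sigma$. The only input needed is the definition of occurrence at position~$1$ for permutations: if $\pi$ occurs at position~$1$ in $\sigma$, then $\sigma(1)\sigma(2)\ldots\sigma(k)$ is order-isomorphic to $\pi$, so that for all $i<j\leqs k$ we have $\sigma(i)>\sigma(j)$ if and only if $\pi(i)>\pi(j)$.

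The key step is the observation that, for an index $j\leqs k$, the value $e_\sigma(j)$ depends only on the relative order of $\sigma(1),\ldots,\sigma(j)$. Indeed,
\[
e_\sigma(j) \eq \big|\{\,i \::\: i<j \text{ and } \sigma(i)>\sigma(j)\,\}\big| ,
\]
and every index $i$ counted here satisfies $i<j\leqs k$, hence lies in the window on which $\sigma$ and $\pi$ agree up to order-isomorphism. Therefore $\{\,i : i<j,\ \sigma(i)>\sigma(j)\,\}=\{\,i : i<j,\ \pi(i)>\pi(j)\,\}$, and so $e_\sigma(j)=e_\pi(j)$. Running this over all $j$ with $1\leqs j\leqs k$ gives the claimed prefix equality and completes the argument.

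There is no real obstacle here; the one point worth making explicit is \emph{why} the hypothesis that the occurrence is at position~$1$ is essential, and correspondingly why nothing can be claimed for $j>k$. For such $j$, the quantity $e_\sigma(j)$ counts inversions of $\sigma(j)$ with preceding terms $\sigma(i)$, $i<j$, some of which lie outside the first $k$ positions and whose comparison with $\sigma(j)$ is not controlled by $\pi$. This is precisely the reason the analogous containment statement fails at positions other than~$1$, as the example following the proposition illustrates.
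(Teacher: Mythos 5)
Your proposal is correct and is essentially identical to the paper's own proof: both argue that for each $j\leqs k$ the set $\{i : i<j,\ \sigma(i)>\sigma(j)\}$ coincides with $\{i : i<j,\ \pi(i)>\pi(j)\}$ because the length-$k$ prefix of $\sigma$ is order-isomorphic to $\pi$, giving $e_\sigma(j)=e_\pi(j)$ termwise. The extra remarks on why the argument is confined to position $1$ are accurate but not needed for the proof itself.
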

\begin{proof}
If $\pi$ has length $k$, then for each $j\in[k]$,
  \[
  e_{\sigma}(j) \eq \big|\{i : i < j \text{ and } \sigma(i) > \sigma(j)\}\big|
                \eq \big|\{i : i < j \text{ and } \pi(i) > \pi(j)\}\big|
                \eq e_\pi(j) .
                \qedhere
  \]
\end{proof}

In general, if a consecutive pattern $\pi$ occurs in $\sigma$, then the corresponding terms of $e_{\sigma}$ satisfy a chain of inequalities that depend only on $\pi$.
We defer the determination of the specific correspondence between patterns and inequalities
to Proposition~\ref{propIneqs} in an appendix, since this result is not needed to derive our main results.
However, we do require the following implication in the opposite direction to that in Proposition~\ref{propPos1inPermPos1inInv}.

\begin{prop}\label{propPosjinInvPosjinPerm}
Let $\pi$ be any consecutive permutation pattern.
If $\sigma$ is a permutation and $e_{\pi}$ occurs at position $j$ in $e_{\sigma}$, then $\pi$ occurs at position $j$ in~$\sigma$.
Moreover, if $\pi$ has length $k$, then for all $i<j$ and $\ell=j,\ldots,j+k-1$, we have $\sigma(i)<\sigma(\ell)$.
\end{prop}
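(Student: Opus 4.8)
The plan is to run an induction on the length of the initial segment of the window under consideration, proving the pattern occurrence and the ``moreover'' clause simultaneously. Write $k$ for the length of $\pi$, and for $t\in\{1,\ldots,k\}$ let $H(t)$ be the assertion that $\sigma(j)\,\sigma(j+1)\cdots\sigma(j+t-1)$ has the same relative order as $\pi(1)\cdots\pi(t)$, and that $\sigma(i)<\sigma(\ell)$ for every $i<j$ and every $\ell\in\{j,\ldots,j+t-1\}$. The proposition is exactly $H(k)$. The hypothesis ``$e_\pi$ occurs at position $j$ in $e_\sigma$'' is used in the literal form $e_\sigma(j+t-1)=e_\pi(t)$ for $t=1,\ldots,k$.

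For the base case $H(1)$: since $e_\pi(1)=0$, the hypothesis gives $e_\sigma(j)=0$, i.e.\ no point of $\sigma$ strictly to the left of position $j$ lies above $(j,\sigma(j))$; as the values are distinct this yields $\sigma(i)<\sigma(j)$ for all $i<j$, and the relative-order claim is vacuous for a single term.

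For the inductive step, assume $H(t)$ with $t<k$ and put $\ell=j+t$. Split the inversions counted by $e_\sigma(\ell)$ according to whether the larger-valued earlier point lies in the old window or strictly before position~$j$: let $c=\big|\{\,i\in\{j,\ldots,j+t-1\}:\sigma(i)>\sigma(\ell)\,\}\big|$ and $d=\big|\{\,i<j:\sigma(i)>\sigma(\ell)\,\}\big|$, so that $c+d=e_\sigma(\ell)=e_\pi(t+1)$, and note $e_\pi(t+1)\leqs t$. The crucial point is that $d=0$: if some $i_0<j$ had $\sigma(i_0)>\sigma(\ell)$, then by $H(t)$ we would have $\sigma(i_0)<\sigma(i)$ for every $i$ in the old window, hence $\sigma(\ell)<\sigma(i)$ for all $i\in\{j,\ldots,j+t-1\}$, forcing $c=t$ and therefore $e_\pi(t+1)=c+d\geqs t+1$, contradicting $e_\pi(t+1)\leqs t$. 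So $d=0$, which at once extends the ``below'' clause to $\ell$, giving the second half of $H(t+1)$, and also gives $c=e_\pi(t+1)$. Finally, $\sigma(\ell)$ exceeds exactly $t-c$ members of the old window while $\pi(t+1)$ exceeds exactly $t-e_\pi(t+1)$ members of $\pi(1)\cdots\pi(t)$ (this is precisely what $e_\pi(t+1)$ records), and these counts agree; so appending $\sigma(\ell)$ to the old window and $\pi(t+1)$ to $\pi(1)\cdots\pi(t)$ preserves the relative order, which establishes the first half of $H(t+1)$.

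The argument is short, and the one step that needs care --- really its whole content --- is the deduction $d=0$: the bound $e_\pi(t+1)\leqs t$ together with the inductive hypothesis that no point before position~$j$ lies above the current window forces every inversion recorded by $e_\sigma(\ell)$ to be internal to the window. Everything else is bookkeeping with relative orders. (If one prefers, the induction can be set up to prove only the ``moreover'' clause, after which the pattern occurrence follows in one line: with no point before position~$j$ above any of $\sigma(j),\ldots,\sigma(j+k-1)$, the string $e_\sigma(j)\cdots e_\sigma(j+k-1)$ is literally the inversion sequence of the standardisation of $\sigma(j)\cdots\sigma(j+k-1)$; since it equals $e_\pi$ and a permutation is determined by its inversion sequence, that standardisation is $\pi$.)
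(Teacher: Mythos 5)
Your proof is correct and is essentially the paper's own argument: the paper inducts on the pattern length by peeling off the last point of $\pi$, which is the same induction as your $H(t)$, and the key step in both is identical --- combining $e_\pi(t+1)\leqs t$ with the inductive fact that no point before position $j$ lies above the current window to force every inversion counted by $e_\sigma(j+t)$ to be internal to the window. You spell out the relative-order bookkeeping (the Lehmer-code step) more explicitly than the paper does, but the route is the same.
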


\begin{proof}
  We proceed by induction on the length of the pattern.
  If $\pi$ has length 1, then $\pi=1$ and $e_\pi=0$.
  Hence, $e_\sigma(j)=0$, so there is no point in the plot of $\sigma$ to the upper left of $\sigma(j)$. 

  Suppose now that the proposition holds for patterns of length less than $k$, and that $\pi$ has length~$k$.
  Let $\pi'$ be the permutation of length $k-1$ that results from the removal of the last point of~$\pi$.
  If $e_{\pi}$ occurs at position $j$ in $e_{\sigma}$ then $e_{\pi'}$ also occurs at position $j$ in~$e_{\sigma}$.
  So, by the induction hypothesis, $\pi'$ occurs at position $j$ in~$\sigma$, with
  no point of $\sigma$ to the upper left of any of the $k-1$ points $\sigma(j),\ldots,\sigma(j+k-2)$ that form its occurrence.

  Now $e_\pi(k)<k$.
  So at most $k-1$ points of $\sigma$ are to the upper left of $\sigma(j+k-1)$, all of which must therefore be part of the occurrence of~$\pi'$, forming an occurrence of $\pi$ at position~$j$ in~$\sigma$.
\end{proof}

{
Propositions~\ref{propSnmShiftInvariance} and~\ref{propPos1inPermPos1inInv} immediately imply the following result.

\begin{prop}\label{propSnmtoEnm}
For any consecutive permutation pattern $\pi$ of length $k$
  and any
  $j\in [n+1-k]$,
\begin{align*}
\prob{\pi \text{ occurs at position } j \text{ in } \snm} &\eq \prob{\pi \text{ occurs at position } 1 \text{ in } \snm}\\
 &\eq \prob{e_{\pi} \text{ occurs at position } 1 \text{ in } \enm} .
\end{align*}
\end{prop}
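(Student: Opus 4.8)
The plan is to chain together the two results that have just been proved. Proposition~\ref{propSnmShiftInvariance} gives a position-invariance statement entirely within the permutation representation, and Proposition~\ref{propPos1inPermPos1inInv} relates occurrence of $\pi$ at position~$1$ in a permutation to occurrence of $e_\pi$ at position~$1$ in its inversion sequence. So the first equality in the claimed display is literally an instance of Proposition~\ref{propSnmShiftInvariance} with $i=j$ and (the other index being)~$1$: the probability that $\pi$ occurs at position $j$ in $\snm$ equals the probability that it occurs at position~$1$.

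For the second equality, the key point is that Proposition~\ref{propPos1inPermPos1inInv} only gives one direction ($\pi$ at position~$1$ in $\sigma$ $\implies$ $e_\pi$ at position~$1$ in $e_\sigma$), whereas an equality of probabilities needs the converse too. Fortunately the converse is supplied by Proposition~\ref{propPosjinInvPosjinPerm} specialised to $j=1$: if $e_\pi$ occurs at position~$1$ in $e_\sigma$, then $\pi$ occurs at position~$1$ in~$\sigma$ (the "moreover" clause is vacuous when $j=1$ since there are no indices $i<1$). Hence the two events
\[
\{\pi \text{ occurs at position } 1 \text{ in } \snm\}
\qquad\text{and}\qquad
\{e_\pi \text{ occurs at position } 1 \text{ in } e_{\snm}\}
\]
coincide as subsets of $\SSS_{n,m}$, so they have the same probability. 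Finally, since $\enm$ and $e_{\snm}$ have the same distribution (noted in the paragraph defining $\enm$, via the bijection between $\eenm$ and $\SSS_{n,m}$), we may replace $e_{\snm}$ by $\enm$ in the last line.

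I expect no real obstacle here; the statement is essentially a bookkeeping combination of Propositions~\ref{propSnmShiftInvariance}, \ref{propPos1inPermPos1inInv} and~\ref{propPosjinInvPosjinPerm}. The only thing to be slightly careful about is that the excerpt's text mentions only Propositions~\ref{propSnmShiftInvariance} and~\ref{propPos1inPermPos1inInv} as "immediately implying" the result; to get a genuine equality rather than an inequality one should also invoke Proposition~\ref{propPosjinInvPosjinPerm} (at $j=1$) for the reverse implication, and confirm that the range $j\in[n+1-k]$ is exactly the set of positions at which a length-$k$ consecutive pattern can occur, so the shift bijection $\Psi^{\,j-1}$ is available.
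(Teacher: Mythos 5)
Your proof is correct and takes essentially the same route as the paper, which gives no written argument beyond asserting that Propositions~\ref{propSnmShiftInvariance} and~\ref{propPos1inPermPos1inInv} immediately imply the result. Your additional care in invoking Proposition~\ref{propPosjinInvPosjinPerm} at $j=1$ to obtain the reverse inclusion --- so that the two events genuinely coincide rather than merely one containing the other --- is a legitimate point that the paper's ``immediately imply'' glosses over (one could equivalently note that the first $k$ terms of $e_\sigma$ determine the pattern of $\sigma(1)\ldots\sigma(k)$, since distinct $k$-permutations have distinct inversion sequences).
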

} 

Thus, we can restrict our attention to the pattern $e_{\pi}$.

\begin{figure}[t]
\centering
\begin{tikzpicture}[scale=0.25]
  \plotc{24}{1,1,3,0,0,3,1,1,2,5,0,1,0,0,0,2,1,0,3,1,1,2,2,0}
\end{tikzpicture}
\caption{The bar-chart representation of a $24$-term composition of 30}\label{figCompBarChart}
\end{figure}
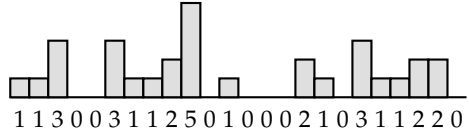

\section{Compositions and inversion sequences}\label{sectCompositions}

In this section, we introduce two models of random integer compositions.
We then leverage results from~\cite{BTCompositions} concerning patterns in compositions to find the thresholds for consecutive patterns in inversion sequences, and hence also for consecutive patterns in permutations.

An \emph{$n$-term weak composition of $m$}, or just an \emph{$n$-composition of $m$}, is a sequence of $n$ nonnegative integers that sum to~$m$. See Figure~\ref{figCompBarChart} for an example.
If $C$ is a composition, then we use $C(i)$ to denote its $i$th term and $\|C\|$ to denote the sum of its terms (or \emph{weight}).
Let $\CCC_n$ denote the (infinite) set of all $n$-compositions, and let $\ccnm$ denote the set of all $n$-compositions of $m$.
By a simple ``stars and bars'' argument, it can be seen that the total number of distinct $n$-compositions of $m$ is equal to $\binom{m+n-1}{m}$.

We say that a composition $c$ of length $k$ occurs as an \emph{exact pattern} at position $j$ in another composition $C$ if
$C(j-1+i)=c(i)$ for each $i\in[k]$.
Equivalently, $c$ occurs at position $j$ in $C$ if $C[j,j+k-1]=c$, where $C[i,j]$ denotes the sub-composition $C(i),\ldots,C(j)$.
For example, the exact pattern 3112 occurs twice in the composition in Figure~\ref{figCompBarChart}, at positions 6 and 19.

We now present two models of random compositions.
The first is the \emph{uniform random composition} $\cnm$, chosen uniformly from $\ccnm$.
For any $C\in\ccnm$, we have
\[
\prob{\cnm=C} \eq \binom{m+n-1}{m}^{\!\!-1} .
\]

{
It is easy to see that the distribution of any exact pattern in $\cnm$ is independent of its position.

\begin{prop}\label{propCnmPosInd}
Let $c$ be any exact composition pattern of length $k$. Then,
  for any
  $i,j\in [n+1-k]$,
\[\prob{c \text{ occurs at } i \text{ in } \cnm} \eq \prob{c \text{ occurs at } j \text{ in } \cnm}.\]
\end{prop}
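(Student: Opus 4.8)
The plan is to exhibit an explicit bijection on $\ccnm$ that cyclically shifts compositions by one position while preserving both length and weight, and then conclude the general statement by iterating this bijection. First I would define the map $\Phi:\ccnm\to\ccnm$ that sends a composition $C=C(1)C(2)\ldots C(n)$ to its cyclic rotation $\Phi(C)=C(n)C(1)C(2)\ldots C(n-1)$. Since cyclic rotation clearly preserves the number of terms and the sum of the terms, $\Phi$ maps $\ccnm$ to itself, and it is a bijection because it has the obvious inverse (rotate the other way). Because $\cnm$ is chosen uniformly from $\ccnm$, the random compositions $\cnm$ and $\Phi(\cnm)$ have the same distribution; equivalently, $\prob{\cnm=C}=\prob{\cnm=\Phi(C)}$ for every $C\in\ccnm$.

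The second step is to relate occurrences of the exact pattern $c$ at consecutive positions under $\Phi$. If $c$ has length $k$ and $1\leqs j\leqs n-k$, then $c$ occurs at position $j$ in $C$ if and only if $c$ occurs at position $j+1$ in $\Phi(C)$: indeed $\Phi(C)(j+1+i-1)=\Phi(C)(j+i)=C(j+i-1)=C(j-1+i)$ for each $i\in[k]$, since all the indices $j,\ldots,j+k-1$ involved lie in the range $1,\ldots,n-1$, which $\Phi$ shifts rigidly to the right by one. (The wrap-around term sits at position $1$ of $\Phi(C)$ and is never touched when $j\geqs 1$.) Hence for $i\leqs j$,
\[
\prob{c\text{ occurs at }j\text{ in }\cnm}
\eq\prob{c\text{ occurs at }j\text{ in }\Phi^{j-i}(\cnm)}
\eq\prob{c\text{ occurs at }i\text{ in }\cnm},
\]
where the first equality uses the pattern-shift property $j-i$ times and the second uses that $\Phi^{j-i}(\cnm)$ has the same distribution as $\cnm$. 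By symmetry this gives equality for all $i,j\in[n+1-k]$.

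I do not anticipate a genuine obstacle here: the result is essentially the observation that $\ccnm$ carries a transitive cyclic-shift action under which the uniform distribution is invariant, so all positions are interchangeable. The only mild care needed is the bookkeeping in the second step — checking that, for $j\geqs 1$, the window $C[j,j+k-1]$ used to detect the pattern does not involve the single position (position $n$, resp.\ position $1$) that the rotation moves non-rigidly — and this is immediate from $j+k-1\leqs n-1$ when $j\leqs n-k$. This argument is the direct analogue of the proof of Proposition~\ref{propSnmShiftInvariance}, but substantially simpler, since for compositions the inversion-preserving surgery there is replaced by a plain cyclic rotation.
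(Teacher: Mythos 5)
Your proof is correct, but it takes a genuinely different route from the paper's. The paper simply computes the probability directly: the number of compositions in $\ccnm$ whose window $C[i,i+k-1]$ equals $c$ is, by stars and bars, $\binom{(m-\|c\|)+(n-k)-1}{m-\|c\|}$, since the remaining $n-k$ terms must sum to $m-\|c\|$; dividing by $|\ccnm|=\binom{m+n-1}{m}$ gives a quantity manifestly independent of $i$. You instead exhibit a weight-preserving cyclic rotation of $\ccnm$ under which the uniform measure is invariant and which rigidly shifts pattern occurrences, exactly mirroring the bijection $\Psi$ used for Proposition~\ref{propSnmShiftInvariance}. Your bookkeeping is right (the window never touches the wrapped-around term when $j\leqs n-k$), and the argument is sound. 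What each approach buys: the paper's computation is a one-liner and gives the exact value of the occurrence probability as a byproduct (a formula it reuses in the proof of Proposition~\ref{propThreshCnmContainsExactPatt}); your symmetry argument avoids any enumeration, makes the structural reason for position-invariance transparent, and is the method one is forced to use in the permutation setting where no such clean count is available --- but for compositions it is slightly more machinery than necessary.
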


\begin{proof}
The probability of $c$ appearing at position $i$ in $\cnm$ is equal to
\[
{\binom{(m - \|c\|)+(n-k)-1}{m - \|c\|}}\times{\binom{m+n-1}{m}}^{\!\!-1} ,
\]
which doesn't depend on $i$.
\end{proof}
} 

Our second model, which is significantly easier to analyse, is the \emph{geometric random composition}~$\cnp$.
If $p \in [0,1)$, then $\cnp$ is distributed over $\CCC_n$ so that for each $C \in \CCC_n$, we have \[\prob{\cnp = C} \eq q^np^{\|C\|},\] where $q = 1-p$.
Each term of $\cnp$ is sampled independently from the geometric distribution with parameter $q$, that is, $\prob{C_{n,p}(i) = k} = qp^k$ for each $k \geqs 0$ and $i \in [n]$.
Note that $\expec{\|\cnp\|}=np/q$.
To avoid unnecessary repetition, when considering $\cnp$ in this work, $q$ always denotes $1-p$.

Thresholds are defined in our composition models in an analogous manner to $\snm$.
A function $m^\star = m^\star(n)$ is a threshold for a property $\QQQ$ in the uniform random composition $\cnm$ if
  \[
  \liminfty \prob{\cnm \text{ satisfies }\QQQ} \eq
  \begin{cases}
    0 & \text{if~ $m\ll m^\star$,} \\
    1 & \text{if~ $m\gg m^\star$.}
  \end{cases}
  \]
Similarly, a function $p^\star = p^\star(n)$ is a threshold for a property $\QQQ$ in the geometric random composition $\cnp$ if
\[
  \liminfty \prob{\cnp \text{ satisfies }\QQQ} \eq
  \begin{cases}
    0 & \text{if~ $p/q\ll p^\star/q^\star$,} \\
    1 & \text{if~ $p/q\gg p^\star/q^\star$,}
  \end{cases}
  \]
where $q^\star = 1-p^\star$.

In our previous work~\cite{BTCompositions} on the evolution of random compositions, we investigate the appearance of patterns in compositions.
We use these results to determine the threshold for the appearance of an exact pattern in~$\cnm$.

\begin{prop}\label{propThreshCnmContainsExactPatt}
If $c$ is a non-zero exact composition pattern of length $k$ with $\|c\|=s$, then for any positive constant~$a$,
\[\liminfty\prob{\cnm \text{ contains }c} \eq \begin{cases}
    0 & \quad \text{if~ } m \ll n^{1-1/s},\\
    1 - e^{-a^{s}} & \quad \text{if~ } m \sim a n^{1-1/s} \text{~and~} s>1,\\
    1 & \quad \text{if~ } m \sim a \text{~and~} s=1,\\
    1 & \quad \text{if~ }  n^{1-1/s} \ll m \ll n^{1+1/k}.
  \end{cases}
\]
\end{prop}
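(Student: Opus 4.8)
The plan is to reduce the statement about the uniform random composition $\cnm$ to the corresponding statement about the geometric random composition $\cnp$, using the fact (recorded in the definitions of the thresholds in the two models) that the relevant regime $m\sim an^{1-1/s}$ for $\cnm$ corresponds to the regime $p/q\sim a n^{-1/s}$ (equivalently $p\sim a n^{-1/s}$, since then $q\to1$) for $\cnp$, and that $\cnp$ conditioned on $\|\cnp\|=m$ is exactly $\cnm$. The results of~\cite{BTCompositions} are stated there for the geometric model (or can be transferred to it), so the bulk of the work is a transfer argument: one shows that the probability of containing the exact pattern $c$ is (asymptotically) the same under the two models once the scaling is matched. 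Concretely, I would first invoke~\cite{BTCompositions} to get, for the geometric composition $\cnp$ with $p\sim a n^{-1/s}$, that $\prob{\cnp\text{ contains }c}\to 1-e^{-a^s}$ when $s>1$ (and $\to1$ when $s=1$, and $\to0$ when $p\ll n^{-1/s}$, and $\to1$ when $n^{-1/s}\ll p\ll n^{1/k-1}$). The heuristic here is the standard second-moment/Poisson-approximation one: the expected number of occurrences of $c$ at the $\approx n$ available positions is $\approx n\cdot q^k p^{\|c\|}\sim n\cdot p^s\sim a^s$, and occurrences at well-separated positions are nearly independent, so the count is asymptotically Poisson with mean $a^s$, giving $\prob{\text{no occurrence}}\to e^{-a^s}$.

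Second, I would carry out the de-Poissonization: transfer the conclusion from $\cnp$ to $\cnm$. The key point is that $\|\cnp\|$ is a sum of $n$ i.i.d.\ geometrics with mean $np/q\sim an^{1-1/s}$ and standard deviation of order $\sqrt{n\,p/q^2}\sim\sqrt{a}\,n^{(1-1/s)/2}$, so $\|\cnp\|$ concentrates around its mean on the scale $n^{1-1/s}$; choosing $p$ so that $\expec{\|\cnp\|}=m$ (or within $o(n^{1-1/s})$ of $m$) one has $\prob{\|\cnp\|=m}$ of order $n^{-(1-1/s)/2}$, i.e.\ only polynomially small. Since $\cnm$ is $\cnp$ conditioned on $\|\cnp\|=m$, and since the event ``$\cnp$ contains $c$'' is monotone/robust enough, one argues that $\prob{\cnm\text{ contains }c}=\prob{\cnp\text{ contains }c\mid\|\cnp\|=m}$ converges to the same limit. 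In~\cite{BTCompositions} there should already be a transfer lemma of exactly this shape (comparing $\cnm$ and $\cnp$ thresholds), so I would quote it; if not, the argument is to sandwich: condition also on $\|\cnp\|\in\{m,m\pm1,\dots\}$ in a small window, use that the conditional law of the pattern count varies slowly in the conditioning value, and use the local limit theorem for $\|\cnp\|$ to control the atom at $m$.

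Third, I would treat the two degenerate regimes separately. For $m\ll n^{1-1/s}$: the expected number of occurrences of $c$ in $\cnm$ is at most (number of positions)$\times$(probability a fixed block of $k$ consecutive terms equals $c$), and by the explicit formula in the proof of Proposition~\ref{propCnmPosInd} this probability is $\binom{m-\|c\|+n-k-1}{m-\|c\|}\big/\binom{m+n-1}{m}$, which is of order $(m/n)^s$ when $m\ll n$; summing over $\approx n$ positions gives $O(n\cdot(m/n)^s)=O(m^s/n^{s-1})=o(1)$, so by Markov's inequality $\cnm$ a.a.s.\ avoids $c$. For the regime $n^{1-1/s}\ll m\ll n^{1+1/k}$ (so that a.a.s.\ $\cnm$ contains $c$): here the first-moment estimate already tends to infinity, and one needs a second-moment (or again a transfer from the geometric model, where~\cite{BTCompositions} supplies the statement) to conclude that the occurrence count is concentrated and hence positive a.a.s. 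The upper restriction $m\ll n^{1+1/k}$ is what keeps the per-term values $O(m/n)$ typically small compared to the entries of $c$ being bounded, ensuring the pattern $c$ (which has bounded entries) is not swamped; this is exactly the window in which the pattern count still behaves like a sparse process rather than being forced to appear trivially.

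The main obstacle I anticipate is the de-Poissonization step — making rigorous that conditioning the geometric model on the exact value $\|\cnp\|=m$ does not distort the limiting probability $1-e^{-a^s}$. This requires a local central limit theorem for the sum of $n$ i.i.d.\ geometric variables with parameter $q=1-p\to1$ (so the summands are unusual: mostly $0$, occasionally positive), uniform enough near the mean to give the correct order for the atom at $m$, together with a comparison showing the pattern-count's conditional distribution is Lipschitz in the conditioning value on the relevant $o(n^{1-1/s})$ scale. If~\cite{BTCompositions} already contains a ready-made ``geometric $\Rightarrow$ uniform'' transfer proposition for pattern thresholds, this entire obstacle is bypassed and the proof becomes a short invocation plus the elementary first-moment bound for the subcritical regime; I would structure the write-up to quote that transfer result if available and only fall back to the hands-on local-limit argument otherwise.
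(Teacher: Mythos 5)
Your overall strategy is the same as the paper's: quote the threshold result for the geometric model $\cnp$ from \cite{BTCompositions} (their Proposition~4.1), transfer it to $\cnm$ via a ready-made comparison lemma (their Proposition~4.2 states exactly that $\prob{\cnm \text{ contains } c} \sim \prob{\cnp \text{ contains } c}$ whenever $m\sim np/q\gg1$), and handle the subcritical regime by a first-moment bound. So the de-Poissonization machinery you worry about is indeed already packaged, and your first-moment computation for $m\ll n^{1-1/s}$ matches the paper's.

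There is, however, a genuine gap in your treatment of the boundary case $s=1$, $m\sim a$. First, your parenthetical claim that $\prob{\cnp\text{ contains }c}\to1$ when $s=1$ and $p\sim an^{-1/s}=a/n$ is false: by the very Poisson heuristic you describe, the expected number of occurrences is $\sim np^s=a$, so the geometric model gives $1-e^{-a}$, not $1$, regardless of whether $s=1$ or $s>1$. Second, the transfer lemma cannot rescue you here, because it requires $m\gg1$, whereas $m\sim a$ is bounded; your concentration argument for $\|\cnp\|$ also collapses in this regime, since the mean and standard deviation of $\|\cnp\|$ are both $O(1)$. The discrepancy between the two models is real: $\cnm$ with $m=a$ is \emph{guaranteed} to have total weight $a\geqs 1$, so it always has a non-zero term, whereas $\cnp$ is all-zero with probability $e^{-a}(1+o(1))$. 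The paper closes this case with a separate direct argument: a.a.s.\ the $a$ units of weight in $\cnm$ land in distinct, well-separated positions away from the boundary, so each yields an occurrence of the weight-one pattern $c$, giving probability tending to $1$. Your write-up needs an argument of this kind for the third case of the statement; as it stands, that case is asserted via an incorrect limit for the geometric model combined with a transfer step whose hypothesis fails.
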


\begin{proof}
This follows from two results in~\cite{BTCompositions},
Proposition~4.1:
\[\liminfty\prob{\cnp \text{ contains } c} \eq \begin{cases}
    0 & \quad \text{if~ } p \ll n^{-1/s},\\
    1 - e^{-a^{s}} & \quad \text{if~ } p \sim a n^{-1/s},\\
    1 & \quad \text{if~ }  n^{-1/s} \ll p \text{ and } q \gg n^{-1/k},
  \end{cases}
\]
and Proposition~4.2:
\[
\text{If~ $m \sim np/q\gg1$ ~then~~}
\prob{\cnm \text{ contains } c} \sim \prob{\cnp \text{ contains } c}.
\]
If $m<s$, then $\cnm$ doesn't contain~$c$.
If $m$ is bounded and $m\geqs s>1$, then
\[
\prob{\cnm \text{ contains } c}
\;<\;
n\, \frac{\binom{m-s+n-k-1}{m-s}}{\binom{m+n-1}{m}}
\;\sim\;
\frac{m!}{(m-s)!}\, n^{1-s}
\;\ll\; 1 .
\]
If $m\sim a$ and $s=1$, then a.a.s. $\cnm$ contains exactly $a$ occurrences of~$c$, 
this being the same as 
having the first few and last few terms equal to zero, and 
avoiding a finite number of patterns, 
each of weight greater than one,  
whose non-zero terms are close together.
\end{proof}

Clearly an inversion sequence is a special type of composition.
We would like to leverage our results on patterns in compositions in order to establish results concerning inversion sequences, and hence permutations.
To this end we determine when $\cnm$ is a.a.s. an inversion sequence.

\begin{prop}\label{propCnmAnInvSeq}
The threshold for $\cnm$ to be an inversion sequence is given by
\[\liminfty\prob{\cnm \in \eenm} \eq \begin{cases}
    1       & \quad \text{if~ } m \ll n,\\
    0  & \quad \text{if~ } m \gg n.
  \end{cases}
\]
\end{prop}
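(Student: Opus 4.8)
The plan is to recall that $C \in \CCC_n$ fails to be an inversion sequence precisely when some term violates the bound $C(j) < j$, i.e. when there exists $j \in [n]$ with $C(j) \geqs j$. Since the first few terms of a composition are the ones that are hard to keep small — $C(1)$ must be $0$, $C(2) \in \{0,1\}$, and so on — the obstruction is concentrated near the left end. So I would split into the two regimes and estimate $\prob{C(j) \geqs j \text{ for some } j}$ in each.

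First, for the regime $m \ll n$: here I want to show the failure probability tends to $0$. I would use a union bound, $\prob{\cnm \notin \eenm} \leqs \sum_{j=1}^n \prob{\cnm(j) \geqs j}$, and estimate each term via the stars-and-bars count: $\prob{\cnm(j) \geqs j} = \binom{(m-j)+(n-1)-1}{m-j}\big/\binom{m+n-1}{m}$ when $m \geqs j$ (and $0$ otherwise), using Proposition~\ref{propCnmPosInd}-style reasoning (fixing the value of a single term). Writing out the ratio, $\prob{\cnm(j) \geqs j}$ behaves like $\big(m/(m+n)\big)^j$ up to lower-order factors, which is at most $(m/n)^j$ roughly; summing the geometric-type series over $j$ gives something of order $m/n \to 0$. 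The key simplification is that only the $j=1$ term really matters — $\prob{\cnm(1) \geqs 1}$ is already $m/(m+n-1) \sim m/n$ — and the remaining terms are dominated by a convergent geometric tail with ratio $\ll 1$.

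Second, for the regime $m \gg n$: here I want the failure probability to tend to $1$, equivalently $\prob{\cnm \in \eenm} \to 0$. The cleanest route is to observe that $\cnm \in \eenm$ forces $C(1) = 0$, so $\prob{\cnm \in \eenm} \leqs \prob{\cnm(1) = 0} = \binom{m+n-2}{m}\big/\binom{m+n-1}{m} = (n-1)/(m+n-1) \to 0$ when $m \gg n$. That single-term bound already suffices, so this direction is genuinely easy.

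The main obstacle is purely bookkeeping in the $m \ll n$ direction: getting a clean uniform bound on the ratio $\binom{(m-j)+n-2}{m-j}\big/\binom{m+n-1}{m}$ that is summable over all $j$ simultaneously, rather than just controlling finitely many leading terms. I would handle this by bounding the ratio above by $\prod_{i=0}^{j-1}\frac{m-i}{m+n-1-i} \leqs (m/(n-1))^j$ for $j$ in the relevant range, so that $\sum_j \prob{\cnm(j)\geqs j} \leqs \sum_{j\geqs 1}(m/(n-1))^j = O(m/n) \to 0$; the only care needed is that $m/(n-1) < 1$ eventually, which holds since $m \ll n$. Alternatively, one could invoke the coupling with $\cnp$ (as in Proposition~4.2 of~\cite{BTCompositions}, used in the proof of Proposition~\ref{propThreshCnmContainsExactPatt}) together with the fact that under $\cnp$ the terms are i.i.d.\ geometric, making $\prob{\cnp(j) \geqs j} = p^j$ exactly and the union bound transparent; but the direct binomial estimate is short enough that I would present it without detouring through the geometric model.
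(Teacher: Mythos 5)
Your proof is correct, but it takes a genuinely different route from the paper's. The paper never works directly in the uniform model: it first computes, in the geometric model $\cnp$, the exact probability $\prob{\cnp\in\eenm}=\prod_{i=1}^n(1-p^i)$ (using the independence of the terms), identifies the limiting behaviour of this product via Euler's Pentagonal Number Theorem, and then transfers the resulting threshold (at constant $p$, i.e.\ $m\sim n$) to $\cnm$ by observing that \emph{not} being an inversion sequence is an increasing property and invoking the transfer result Proposition~2.8 of \cite{BTCompositions}. You instead argue entirely within $\cnm$: a union bound over $j$ of $\prob{\cnm(j)\geqs j}=\prod_{i=0}^{j-1}\frac{m-i}{m+n-1-i}\leqs\big(\tfrac{m}{m+n-1}\big)^j$, summing to $O(m/n)$ when $m\ll n$, and the single-term bound $\prob{\cnm\in\eenm}\leqs\prob{\cnm(1)=0}=\tfrac{n-1}{m+n-1}$ when $m\gg n$. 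Your version is more elementary and self-contained (no pentagonal number theorem, no transfer machinery from the companion paper); the paper's version buys uniformity with its general framework and, via the exact product, also gives the precise limiting probability for constant $p$ in the geometric model, which your union bound does not. One small slip: your displayed formula $\binom{(m-j)+(n-1)-1}{m-j}\big/\binom{m+n-1}{m}$ is not the probability that $\cnm(j)\geqs j$ --- the correct numerator is $\binom{(m-j)+n-1}{m-j}$, obtained by subtracting $j$ from the $j$th term and counting $n$-compositions of $m-j$ --- but the product $\prod_{i=0}^{j-1}\frac{m-i}{m+n-1-i}$ that you actually use in the summable bound is exactly this correct ratio, so the argument goes through unchanged.
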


The proof of this result requires the notion of an \emph{increasing} property.
We say that a property $\QQQ$ of compositions is {increasing} if $C$ satisfying $\QQQ$ implies that $C^{+j}$ satisfies $\QQQ$, for every $j \in [n]$, where $C^{+j}$ denotes the composition obtained from $C$ by the addition of $1$ to its $j$th term.

\begin{proof}
We first establish the threshold for the geometric random composition $\cnp$ to be an inversion sequence.
Recall that $C\in\eenm$ if $C(i)<i$ for each $i \in [n]$.

Now,
$\prob{\cnp(i) < i} \eq 1-p^i.$
So
\[
\prob{\text{$\cnp$ is an inversion sequence}} \eq \prod^{n}_{i=1}\left(1-p^i\right) .
\]
By Euler's Pentagonal Number Theorem (see~\cite{GJ2004}),
\[
\prod^{\infty}_{i=1}\left(1-p^i\right) \eq 1 + \sum^{\infty}_{k=1}(-1)^k\left(p^{k(3k+1)/2} + p^{k(3k-1)/2}\right) \eq 1 - p - p^2 + p^5 + p^7 -\dots .
\]
If $p \ll 1$, then this converges to 1 as $n$ tends to infinity, and so a.a.s. $\cnp$ is an inversion sequence.

On the other hand,
\[
\prob{\cnp \text{ is an inversion sequence}} \eq q\prod^n_{i=2}\left(1-p^i\right) \;\leqs\; q.
\]
If $q \ll 1$, then this converges to 0 as $n$ tends to infinity, and so a.a.s. $\cnp$ is not an inversion sequence.

Not being an inversion sequence is an {increasing} property.
This enables us to transfer the threshold from $\cnp$ to $\cnm$ by using \cite[Proposition~2.8]{BTCompositions}:
\begin{myquote}
  If $\QQQ$ is an increasing property that has a threshold $p^\star \geqs n^{-1}$ in $\cnp$, then $np^\star/q^\star$ is a threshold for $\QQQ$ in $\cnm$, where $q^\star = 1-p^\star$.
\end{myquote}
The result follows.
\end{proof}

This enables us to handle values of $m\ll n$.
To extend our results to slightly greater $m$, we require the following bound from~\cite{BTCompositions}
on the largest term in $\cnm$.
If $\max(C)$ is the largest term in composition~$C$, then a.a.s. $\max(\cnm)$ does not grow faster than $\frac{m}{n}\log n$.

\begin{prop}[{\cite[Propositions~4.9--4.11 and~2.8]{BTCompositions}}]\label{propCnmMax}
  \[
  \liminfty \prob{\max(\cnm)\gg \frac{m}n\log n} \eq 0 .
  \]
\end{prop}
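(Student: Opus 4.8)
The plan is a first-moment argument. The key point is that although a single term of $\cnm$ has mean only $m/n$, its upper tail decays \emph{geometrically}, at rate $\tfrac{m}{m+n}$ --- the same rate exhibited by the matched geometric composition $\cnp$ --- and this is exactly what makes a union bound over the $n$ terms strong enough to reach the threshold $\tfrac mn\log n$. I would carry this out directly on the uniform model. An alternative, presumably closer to the cited Propositions~4.9--4.11 of~\cite{BTCompositions}, is to first establish, in the geometric model, the threshold for $\max(\cnp)\geqs t$ --- here $\prob{\cnp(i)\geqs t}=p^t$, so the threshold is $p^\star=n^{-1/t}$ --- and then transfer it to $\cnm$, using that $\max(C)\geqs t$ is an increasing property and that $p^\star\geqs n^{-1}$, via Proposition~2.8 of~\cite{BTCompositions} (quoted above).

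First I would pin down the marginal tail. For any $i$, a ``stars and bars'' count shows that, for $t\leqs m$, the number of $n$-compositions of $m$ with $i$th term at least $t$ equals $\binom{m-t+n-1}{n-1}$ (obtained by subtracting $t$ from that term); hence
\[
\prob{\cnm(i)\geqs t}
\eq\frac{\binom{m-t+n-1}{n-1}}{\binom{m+n-1}{n-1}}
\eq\prod_{j=0}^{t-1}\frac{m-j}{m+n-1-j}
\;\leqs\;\biggl(\frac{m}{m+n-1}\biggr)^{\!\!t},
\]
the last step because the factors decrease in $j$; the probability is $0$ when $t>m$. A union bound over the $n$ terms then gives
\[
\prob{\max(\cnm)\geqs t}\;\leqs\;n\biggl(\frac{m}{m+n-1}\biggr)^{\!\!t}\;\leqs\;\exp\!\biggl(\log n-\frac{t(n-1)}{m+n-1}\biggr).
\]

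To finish I would take any $t=t(n)$ with $t\gg\tfrac mn\log n$. In the range in which this proposition is applied --- namely $m$ of order $n$ or larger --- the quantity $m+n-1$ is of order $m$, so $\tfrac{t(n-1)}{m+n-1}$ is of order $\tfrac{tn}{m}$, and $\tfrac{tn}{m}\gg\log n$; hence the exponent above tends to $-\infty$, and so $\prob{\max(\cnm)\geqs t}\to0$. As $t$ was an arbitrary function growing faster than $\tfrac mn\log n$, this is precisely the assertion that $\prob{\max(\cnm)\gg\tfrac mn\log n}\to0$.

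No step here is genuinely hard, but the one point requiring care is the sharpness of the single-term tail bound: Markov's inequality on $\cnm(i)$ yields only $\prob{\cnm(i)\geqs t}\leqs\tfrac{m}{nt}$, after which a union bound would demand $t\gg m$, far too weak to reach $\tfrac mn\log n$. Simplifying the exact binomial ratio so as to expose the geometric decay rate $\tfrac{m}{m+n}$ is what makes the argument work. On the transfer route, the corresponding obstacle is instead to verify that Proposition~2.8 can be applied uniformly, since here the target value $t$ --- and hence the property and its threshold --- is allowed to depend on~$n$.
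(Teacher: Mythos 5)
Your argument is correct in substance, but note that the paper does not prove this proposition at all: it is imported wholesale from \cite{BTCompositions}, and the citation (Propositions~4.9--4.11 together with Proposition~2.8) indicates that the original route is exactly the ``alternative'' you sketch --- establish the threshold $p^\star=n^{-1/t}$ for $\max(\cnp)\geqs t$ in the geometric model and transfer it to $\cnm$ via the increasing-property result. Your primary route, a direct union bound on the exact tail $\prob{\cnm(i)\geqs t}=\binom{m-t+n-1}{n-1}\big/\binom{m+n-1}{n-1}\leqs\big(\tfrac{m}{m+n-1}\big)^t$, is therefore genuinely different: it is self-contained, avoids the transfer machinery entirely (and with it the worry you raise about applying Proposition~2.8 uniformly in an $n$-dependent $t$), and correctly isolates the geometric decay rate that Markov's inequality misses. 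The computation itself is sound. The one caveat is your closing claim that the proposition is only applied for $m$ of order $n$ or larger: in fact it is invoked in Proposition~\ref{propCnmToEnm} for all $1\ll m\ll n^2/\log^2 n$, so sublinear $m$ is very much in scope. There the step $1-x\leqs e^{-x}$ becomes lossy (for $m\ll n$ one has $\tfrac{n-1}{m+n-1}\to1$, so the exponent only gives $\log n-t$), and you should instead retain the product bound, which gives $n\big(\tfrac{m}{m+n-1}\big)^t\to0$ precisely when $t\log\big(1+\tfrac{n-1}{m}\big)\gg\log n$; this comfortably covers the values $k\gg\tfrac mn\log n$ actually used in that proof once $k\to\infty$ suitably. (For $m\ll n$ the true threshold for the maximum is $\log n/\log(n/m)$, which exceeds $\tfrac mn\log n$, so the proposition's literal phrasing must be read as an upper bound on growth in the regime where it is sharp, not as a statement about every $t\gg\tfrac mn\log n$; this is an imprecision in the statement rather than in your proof.)
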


Using this bound, we can establish that, under suitable conditions, if a pattern a.a.s. occurs in~$\cnm$ then it also a.a.s. occurs in (a suffix of)~$\enm$.

\begin{prop}\label{propCnmToEnm}
  Suppose $c$ is an exact composition pattern,
  and that
  $m^-\gg1$ and $m^+\ll n^2/\log^2n$ are such that
  a.a.s. $\cnm$ contains $c$ whenever $m^-\ll m\ll m^+$.
  Then,
  a.a.s. $\enm$ also contains $c$ under the same conditions on~$m$.
\end{prop}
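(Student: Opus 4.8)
The plan is to compare $\enm$ with the uniform random composition $\cnm$, which by the hypothesis (equivalently, by Proposition~\ref{propThreshCnmContainsExactPatt}) a.a.s.\ contains~$c$. When $m\ll n$ this is immediate: by Proposition~\ref{propCnmAnInvSeq}, a.a.s.\ $\cnm\in\eenm$, so conditioning $\cnm$ on the a.a.s.\ event $\{\cnm\in\eenm\}$ produces $\enm$ while moving the distribution by only $o(1)$ in total variation, whence $\prob{\enm\text{ contains }c}\geqs\prob{\cnm\text{ contains }c}-o(1)\to 1$. For larger $m$ this fails, since then a.a.s.\ $\cnm\notin\eenm$; a routine subsequence argument reduces what remains to the range $\tfrac mn\log n\to\infty$, which we treat by restricting attention to a long suffix of $\enm$ on which the inversion-sequence constraint $e(i)<i$ is a.a.s.\ never binding.

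So suppose $\tfrac mn\log n\to\infty$ and choose a cut-point $n_0=n_0(n)$ with $\tfrac mn\log n\ll n_0\ll\sqrt m$; such a function exists precisely because $m\ll n^2/\log^2 n$, and one may take $n_0=\ceil{(m^{3/2}\log n/n)^{1/2}}$, which also satisfies $n_0\to\infty$ and (since $n_0\ll\sqrt m\ll n/\log n$) $n-n_0\sim n$. Split $\enm=(A,B)$ into the head $A=(\enm(1),\dots,\enm(n_0))$ and the tail $B=(\enm(n_0+1),\dots,\enm(n))$, and write $m_A=\|A\|$, $m_B=m-m_A$. Conditioned on $A$, the tail $B$ is uniform over the nonnegative integer sequences $(b_1,\dots,b_{n-n_0})$ of weight $m_B$ with $b_j<n_0+j$ for all $j$; that is, $B$ is distributed as $\C_{n-n_0,\,m_B}$ conditioned on the event $\mathcal{V}=\{b_j<n_0+j\text{ for all }j\}$.

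Three points now finish the proof. (i) As $A$ is itself an inversion sequence on $[n_0]$, deterministically $m_A\leqs\binom{n_0}{2}\ll m$, so $m_B\sim m$ and hence $m^-\ll m_B\ll m^+$. (ii) Since $\mathcal{V}\supseteq\{\max(\C_{n-n_0,\,m_B})<n_0\}$ and $\prob{\max(\C_{n-n_0,\,\ell})\geqs n_0}$ is nondecreasing in~$\ell$, we may bound $\prob{\mathcal{V}^{c}}$ by $\prob{\max(\C_{n-n_0,\,m})\geqs n_0}$, which is $o(1)$ by Proposition~\ref{propCnmMax} combined with $n_0\gg\tfrac mn\log n\sim\tfrac{m}{n-n_0}\log(n-n_0)$; consequently $\prob{B\text{ contains }c\mid A}\geqs\prob{\C_{n-n_0,\,m_B}\text{ contains }c}-\prob{\mathcal{V}^{c}}$, uniformly in~$A$. (iii) Applying Proposition~\ref{propThreshCnmContainsExactPatt} (or the hypothesis) to compositions of length $n-n_0\sim n$ and weight $m_B\sim m$ — legitimate since the thresholds involved are powers of~$n$ and so are unaffected by replacing $n$ with $n-n_0$ — shows that a.a.s.\ $\C_{n-n_0,\,m_B}$ contains~$c$. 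Together, (i)--(iii) give $\prob{B\text{ contains }c\mid A}\geqs 1-o(1)$ for every $A$, so $\prob{\enm\text{ contains }c}\geqs\prob{B\text{ contains }c}\to 1$.

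The crux is the balancing of the two demands on $n_0$: it must be large enough that on the suffix the constraint $e(i)<i$ is a.a.s.\ vacuous — which needs $n_0\gg\tfrac mn\log n$ and is exactly where the large-term bound of Proposition~\ref{propCnmMax} enters — yet small enough that the head carries only the vanishing fraction $\binom{n_0}{2}=o(m)$ of the total weight, which needs $n_0\ll\sqrt m$. A window of admissible $n_0$ exists if and only if $\tfrac mn\log n\ll\sqrt m$, i.e.\ $m\ll n^2/\log^2 n$; this is the real obstacle, and it is precisely the hypothesis of the proposition. The remaining ingredients — monotonicity of $\prob{\max\geqs n_0}$ in the weight, the subsequence reduction to $\tfrac mn\log n\to\infty$, and the passage from length $n$ to length $n-n_0\sim n$ — are routine.
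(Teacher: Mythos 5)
Your proof is correct and takes essentially the same route as the paper's: the paper likewise fixes a cut-point $k$ with $\tfrac{m}{n}\log n\ll k\ll\sqrt{m}$ (your $n_0$), works with the suffix of $\enm$ beyond position $k$ (your $B$), notes its weight differs from $m$ by at most $\binom{k}{2}\ll m$, and uses Proposition~\ref{propCnmMax} to see that the inversion-sequence constraint is a.a.s.\ vacuous on that suffix. Your write-up is, if anything, more explicit than the paper's about the conditional distribution of the tail and the monotonicity needed to handle the random weight $m_B$; the separate case $m\ll n$ and the subsequence reduction are not needed (the single cut-point argument covers the whole range) but do no harm.
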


\begin{proof}
  Suppose $m\ll n^2/\log^2 n$. Then,
  \[
  \frac{m}{n}\log n \;\ll\;
  \sqrt{m} \frac{n}{\log n} \frac{\log n}{n}
  \eq \sqrt{m}
  \;\ll\; \frac{n}{\log n}
  \;\ll\; n.
  \]
  Let $k$ satisfy $\frac{m}{n}\log n \ll k \ll \sqrt{m}$.
  Then, by Proposition~\ref{propCnmMax}, a.a.s. no term of $\cnm$ is greater than~$k$.

  Suppose $s\ll m$.
  Then $m^-(n)\ll m\ll m^+(n)$ implies $m^-(n-k)\ll m-s\ll m^+(n-k)$.
  So,
    if a.a.s. $\cnm$ contains $c$ whenever $m^-\ll m\ll m^+$,
    then it is also the case that a.a.s. $\C_{n-k,m-s}$ contains $c$ whenever $m^-\ll m\ll m^+$

  Now consider the suffix $\e'=\enm[m][k+1,n]$ of $\enm$.
  Clearly, $\e'(i)<k+i$ for each $i\in[n-k]$,
  and $m-\binom{k}2\leqs \|\e'\|\leqs m$, with $\binom{k}2\ll m$ by the definition of $k$.

  Hence,
  \begin{align*}
                & \text{a.a.s. $\cnm$ contains $c$ whenever $m^-\ll m\ll m^+$} \\
  \;\implies\;  & \text{a.a.s. $\C_{n-k,\|\e'\|}$ contains $c$ whenever $m^-\ll m\ll m^+$} \\
  \;\implies\;  & \text{a.a.s. $\e'$ contains $c$ whenever $m^-\ll m\ll m^+$} \\
  \;\implies\;  & \text{a.a.s. $\enm$ contains $c$ whenever $m^-\ll m\ll m^+$} ,
  \end{align*}
  as required.
\end{proof}

Having constructed the necessary framework, we now have all we need to prove our first main result, determining the thresholds for consecutive patterns, which we restate here.

\textbf{Theorem~\ref{thmMainResultConsec}.}
\emph{Let $\pi$ be any consecutive permutation pattern of length $k$.
If $s=\inv(\pi)$ and $s'=\inv(\overline{\pi})$, then for any positive constant~$a$,
\begin{align*}
\liminfty\prob{\snm \text{ contains }\pi} &\eq \begin{cases}
    0 & \quad \text{if~ } m \ll n^{1-1/s},\\
    1-e^{-a^{s}} & \quad \text{if~ $m\sim a n^{1-1/s}$} \text{~and~} s>1,\\
    1 & \quad \text{if~ } m \sim a \text{~and~} s=1,\\
    1 & \quad \text{if~ } n^{1 - 1/s} \ll m \ll n^{1+1/k},
  \end{cases}
\\[3pt]
\liminfty\prob{\snm \text{ contains }\pi} &\eq \begin{cases}
    1 & \quad  \text{if~ } n^{1+1/k} \gg \binom{n}{2} - m \gg n^{1-1/s'},\\
    1 & \quad \text{if~ } \binom{n}{2} - m \sim a \text{~and~} s=1,\\
    1-e^{-a^{s'}} & \quad \text{if~ $\binom{n}{2} - m\sim a n^{1-1/s'}$} \text{~and~} s>1, \\
    0 & \quad \text{if~ } \binom{n}{2} - m \ll n^{1-1/s'},
  \end{cases}
\end{align*}
as long as $s>0$ and $s'>0$, respectively.}

\begin{proof}
  If $m \ll n$, then by Proposition~\ref{propCnmAnInvSeq}, a.a.s. $\cnm$ is an inversion sequence.
  So, by Propositions~\ref{propSnmtoEnm}, \ref{propCnmAnInvSeq} and~\ref{propCnmPosInd},
  for any
  $j\in [n+1-|\pi|]$,
  \begin{align*}
\prob{\pi \text{ occurs at position } j \text{ in } \snm}
&\eq \prob{e_{\pi} \text{ occurs at position } 1 \text{ in } \enm}\\
&\;\sim\; \prob{e_{\pi} \text{ occurs at position } 1 \text{ in } \cnm}\\
&\eq \prob{e_{\pi} \text{ occurs at position } j \text{ in } \cnm}.
\end{align*}
Therefore $\prob{\snm \text{ contains } \pi} \sim \prob{\cnm \text{ contains } e_{\pi}}$.

From Proposition~\ref{propThreshCnmContainsExactPatt},
if $m\ll n^{1-1/s}$ then a.a.s. $\cnm$ avoids $e_\pi$, and so a.a.s. $\snm$ avoids $\pi$.
The same proposition also gives us the probability at the threshold.

Whenever $n^{1 - 1/s} \ll m \ll n^{1+1/k}$, then, by Proposition~\ref{propThreshCnmContainsExactPatt}, a.a.s. $\cnm$ contains $e_\pi$.
So, by Proposition~\ref{propCnmToEnm}, a.a.s. $\enm$ contains $e_\pi$, and so a.a.s. $\snm$ contains $\pi$.

The threshold for the disappearance of $\pi$ then follows from
the fact that
$\snm[\binom{n}2-m]$ has the same distribution as $\overline{\snm}$.
\end{proof}

\begin{table}[ht]
  \centering\small
  \renewcommand{\arraystretch}{1.15}
  \begin{tabular}{c|l|l|}
                & Consecutive permutation patterns          & Corresponding inversion sequences \\ \hline
    $1$         & 21, 132, 213, 1243, 1324, 2134            & 01, 001, 010, 0001, 0010, 0100  \\
    $\sqrt{n}$  & 231, 312, 1342, 1423, 2143, 2314, 3124    & 002, 011, 0002, 0011, 0101, 0020, 0110  \\
    $n^{2/3}$   & 321, 1432, 2341, 2413, 3142, 3214, 4123   & 012, 0012, 0003, 0021, 0102, 0120, 0111  \\
    $n^{3/4}$   & 2431, 3241, 3412, 4132, 4213              & 0013, 0103, 0022, 0112, 0121  \\
    $n^{4/5}$   & 3421, 4231, 4312                          & 0023, 0113, 0122  \\
    $n^{5/6}$   & 4321                                      & 0123
  \end{tabular}
  \caption{Thresholds for the appearance in $\snm$ of short consecutive patterns}\label{tblConsec}
\end{table}

Thus, $m \sim n^{1-1/s}$ is the threshold for the appearance in $\snm$ of each consecutive permutation pattern (of any length) with $s$ inversions.
See Table~\ref{tblConsec} for patterns of length two, three and four.
So, if $0<\gamma<1$ and $m\sim n^\gamma$, then a.a.s. $\snm$ contains any given consecutive pattern with fewer than $1/(1-\gamma)$ inversions, but avoids any given consecutive pattern with more than $1/(1-\gamma)$ inversions.
Note however that $\snm$ \emph{does} contain a consecutive pattern with $m$ inversions, namely $\snm$ itself!

Consecutive patterns having the same length and number of inversions share thresholds for both appearance and disappearance.
Also, two patterns of different lengths with the same number of inversions share their appearance threshold, but a.a.s. the shorter pattern disappears later than the longer one.
On the other hand, given two patterns of the same length with different numbers of inversions, a.a.s. the one with the fewer inversions both appears and disappears first.
For example, a.a.s. 2143, 32145, 4213, 42315 and 31425 appear in that order, but depart in the order 32145, 42315, 2143, 31425, 4213.
However, we note again that our methods do not enable us to show that $\snm$ a.a.s. contains a given pattern for all values of $m$ between the thresholds for its appearance and disappearance.

\section{Classical and vincular patterns}\label{sectClassicalVinc}

In this section, we establish thresholds for classical and vincular patterns.
We say that a classical pattern $\pi$ \emph{occurs at $[i,j]$ in $\sigma$} if $\sigma(i)$ is the first term and $\sigma(j)$ the last term in an occurrence of $\pi$.
Such an occurrence has \emph{width} $w=j+1-i$.
We use $\sigma[i,j]$ to denote the permutation of~$[w]$ that has the same relative order as $\sigma(i),\ldots,\sigma(j)$.

We begin with two propositions concerning occurrences of indecomposable classical patterns.

\begin{prop}\label{propIndecompInv}
  Suppose $\alpha$ is an indecomposable classical pattern of length $k\geqs2$.
  If $\alpha$ occurs at $[i,j]$ in a permutation $\sigma$, with width $w=j+1-i$, then $\inv(\sigma[i,j])\geqs\inv(\alpha)+w-k$.
\end{prop}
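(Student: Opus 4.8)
Suppose $\alpha$ is indecomposable of length $k\geqs 2$, and $\alpha$ occurs at $[i,j]$ in $\sigma$ with width $w=j+1-i$. Write $\tau=\sigma[i,j]$, a permutation of $[w]$ that contains $\alpha$ as a (classical) pattern, with $\tau(1)$ playing the role of $\alpha(1)$ and $\tau(w)$ the role of $\alpha(k)$. The claim is $\inv(\tau)\geqs\inv(\alpha)+w-k$.

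**Plan.** The plan is to induct on the width $w$. The base case $w=k$ is trivial: then $\tau=\alpha$ and the inequality reads $\inv(\alpha)\geqs\inv(\alpha)$. For the inductive step, suppose $w>k$, so that $\tau$ has at least one point not used in the chosen occurrence of $\alpha$; call the positions of the occurrence $1=p_1<p_2<\dots<p_k=w$, and pick any position $q\notin\{p_1,\dots,p_k\}$ with $1<q<w$ (such a $q$ exists since $w>k\geqs 2$, and $q\neq 1,w$ because both endpoints are occurrence positions). Let $\tau'$ be the permutation of $[w-1]$ obtained by deleting the point in column $q$ from $\tau$ and renormalising. Then $\tau'$ still contains $\alpha$ at an occurrence of width $w-1$ (using the same $k$ points, whose relative positions and values are unchanged), with $\tau'(1)$ and $\tau'(w-1)$ still the first and last terms of that occurrence. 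So the induction hypothesis gives $\inv(\tau')\geqs\inv(\alpha)+(w-1)-k$. It therefore suffices to show that deleting that single column loses at least one inversion, i.e. $\inv(\tau)\geqs\inv(\tau')+1$; equivalently, the deleted point participates in at least one inversion of $\tau$.

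**The key point.** The deleted point lies in some column $q$ strictly between the first and last columns of an occurrence of the indecomposable pattern $\alpha$. I claim such a point is always involved in an inversion of $\tau$. Suppose not: then the point in column $q$, call it value $v$, is smaller than every value in columns $<q$ and larger than every value in columns $>q$ — otherwise it would form an inversion with something to its left (a larger value) or to its right (a smaller value). But then columns $1,\dots,q-1$ of $\tau$ hold exactly the values $\{1,\dots,q-2\}\cup\{v\}$ minus... more carefully: every value left of $q$ is less than $v$ and every value right of $q$ is greater than $v$, so $\{\tau(1),\dots,\tau(q-1)\}$ together with $v$ is precisely $\{1,\dots,q\}$ as a set; hence $\{\tau(1),\dots,\tau(q)\}=\{1,\dots,q\}$, so $\tau$ is decomposable at position $q$ with $1\leqs q<w$. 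This contradicts indecomposability of $\tau$ — but wait, $\tau$ need not be indecomposable, only $\alpha$ is. So I must instead derive a contradiction with the occurrence structure: since $\alpha$ occurs at positions $p_1<\dots<p_k$ and $q$ lies strictly between $p_1=1$ and $p_k=w$, there is some $r$ with $p_r<q<p_{r+1}$, and the decomposition $\{1,\dots,q\}=\{\tau(1),\dots,\tau(q)\}$ forces all the occurrence points in columns $p_1,\dots,p_r$ to have values $<v$ and all in columns $p_{r+1},\dots,p_k$ to have values $>v$; restricting to the $k$ occurrence points, this exhibits $\alpha$ as $\alpha[1,r]\oplus\alpha[r+1,k]$, contradicting indecomposability of $\alpha$. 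So the deleted point is in an inversion, $\inv(\tau)\geqs\inv(\tau')+1\geqs\inv(\alpha)+w-k$, completing the induction.

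**Main obstacle.** The only delicate step is making the last paragraph airtight: one has to be careful that "the deleted point is in no inversion" really does force a genuine direct-sum splitting of $\alpha$ itself (not merely of $\tau$), and to handle the bookkeeping of which occurrence points fall left versus right of column $q$. Everything else — the induction skeleton and the "deleting a column loses at most/at least the inversions through that point" observation — is routine. One should also double-check the edge case where $r=0$ or $r=k$ is impossible here precisely because $q\neq 1,w$, which is why choosing $q$ strictly interior matters.
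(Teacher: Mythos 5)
Your proof is correct and rests on the same key observation as the paper's: a non-occurrence point strictly inside the span of the occurrence must participate in an inversion, since otherwise $\alpha$ would split as a direct sum $\alpha[1,r]\oplus\alpha[r+1,k]$. The paper simply counts these $w-k$ inversions directly (one per extra point, each formed with an occurrence point, and all distinct from the $\inv(\alpha)$ inversions internal to the occurrence), whereas you wrap the identical observation in an induction on the width $w$; the only blemish is that your first statement of the no-inversion condition has the inequalities transposed, though the subsequent ``more carefully'' version is the correct one.
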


\begin{proof}
  If $i<\ell<j$ then $\sigma(\ell)$ forms an inversion with some term in the occurrence of $\alpha$.
  Otherwise we would have $\alpha=\beta\oplus\gamma$, with $\beta$ lying to the left and below $\sigma(\ell)$ and $\gamma$ lying to the right and above $\sigma(\ell)$.
  But $\alpha$ is indecomposable.
  Thus each of the $w-k$ terms of $\sigma[i,j]$ not in the occurrence of $\alpha$ contributes at least 1 to the number of inversions in $\sigma[i,j]$.
\end{proof}

With this in hand, we prove that containment of an indecomposable classical pattern implies containment of a consecutive pattern with as many inversions whose length is bounded.
No attempt has been made to optimise the bound on the length.

\begin{prop}\label{propIndecompConsec}
  Suppose $\alpha$ is an indecomposable classical pattern of length $k\geqs2$ with $s$ inversions.
  If $\alpha$ occurs in a permutation $\sigma$, then $\sigma$ contains a consecutive pattern with at least $s$ inversions of length at most $ks$.
\end{prop}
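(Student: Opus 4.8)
The plan is to take an occurrence of $\alpha$ in $\sigma$, say at $[i,j]$ with width $w=j+1-i$, and locate inside the interval of positions $\{i,\ldots,j\}$ a consecutive block that already carries at least $s$ inversions while having bounded length. The key quantitative input is Proposition~\ref{propIndecompInv}, which tells us that $\inv(\sigma[i,j])\geqs s+w-k$. So the permutation $\sigma[i,j]$ has lots of inversions once $w$ is large, and our task is just to extract a short \emph{consecutive} chunk of it that retains $s$ of them.

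First I would set $\tau=\sigma[i,j]$, a permutation of $[w]$ with $\inv(\tau)\geqs s+w-k$, which we may assume has large width (otherwise $w\leqs ks$ already and $\tau$ itself, sitting consecutively in $\sigma$ as $\sigma[i,j]$, is the desired consecutive pattern, since $\inv(\tau)\geqs\inv(\alpha)=s$). Now consider sliding a window of length $\ell:=ks$ across $\tau$, i.e. the consecutive sub-permutations $\tau[t,t+\ell-1]$ for $t=1,\ldots,w-\ell+1$. I would argue by a counting/averaging or greedy argument that one of these windows contains at least $s$ inversions. The natural way: an inversion of $\tau$ is a pair of positions $(a,b)$ with $b-a<\ell$ (a ``short'' inversion) or $b-a\geqs\ell$ (a ``long'' inversion); every short inversion lies inside at least one window, and summing window inversion-counts counts each short inversion at least once, so if the total number of short inversions is at least $s\cdot\frac{w-\ell+1}{\ell}$-ish we are done by pigeonhole. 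The remaining worry is that all inversions might be long. But a position $a$ can participate in at most $w-a$ long inversions as the left endpoint, and crucially there can be at most $\binom{\ell}{2}$-ish short inversions "missing" per... — cleaner is this: the number of long inversions is at most $\binom{w-\ell+1}{2}$ trivially, which does not immediately help, so instead I would use that each of the $w-k$ "extra" terms guaranteed by Proposition~\ref{propIndecompInv} forms an inversion with a term of the $\alpha$-occurrence, and the $\alpha$-occurrence is spread across width $w$; that still does not localise things.

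The honest route, and the one I would commit to, is a \textbf{greedy peeling / recursion on $w$}: if some consecutive window of length $\ell=ks$ in $\tau$ has $\geqs s$ inversions, we are done. Otherwise every such window has at most $s-1$ inversions; in particular no single point of $\tau$ has $\geqs s$ points to its upper-left within distance $\ell$, which forces the "long-range" structure — pick the leftmost point $x$ of $\tau$ that lies above some later point (this exists since $\tau$ has an inversion), delete a suitable endpoint to reduce $w$ by $1$ without destroying the $\alpha$-occurrence or dropping $\inv$ below $s+w-k$, and recurse. Since $\inv$ only needs to stay $\geqs s$ and it decreases by at most $1$ per deletion while the width-deficit bound $s+w-k$ also decreases by $1$, after finitely many steps we reach a permutation of width exactly $ks$ still containing $\alpha$, hence with $\inv\geqs s+ks-k=s+(k-1)(s-1)+(s-1)\geqs s$; but a width-$ks$ occurrence of $\alpha$ inside $\sigma$ is exactly a consecutive pattern $\sigma[i',j']$ of length $ks$ with at least $s$ inversions. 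That final identification — "a classical occurrence of $\alpha$ whose width equals the target length is literally a consecutive pattern of that length" — is the crux, and the whole proof reduces to showing we can shrink the width down to $ks$ while keeping the $\alpha$-occurrence, which Proposition~\ref{propIndecompInv} makes automatic because it guarantees the needed inversions survive.

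The main obstacle I expect is the width-reduction step: when I delete a non-essential point from the interval $[i,j]$ to bring $w$ down, I must make sure (i) the deleted point is not one of the $k$ points forming the $\alpha$-occurrence, which is fine as long as $w>k$, and (ii) the resulting shorter consecutive block is still a genuine consecutive pattern of $\sigma$ — this is where one has to be slightly careful, since deleting an interior column of $\sigma[i,j]$ does \emph{not} give a consecutive subsequence of $\sigma$. The resolution is that we never delete interior columns for the final object; we only ever \emph{discard a prefix or a suffix} of the interval $[i,j]$, using the indecomposability of $\alpha$ together with Proposition~\ref{propIndecompInv} to argue that the $\alpha$-occurrence can be assumed to start and end near the ends of a sub-interval of length $ks$, so that the relevant $\sigma[i',j']$ with $j'-i'+1=ks$ both contains $\alpha$ and is consecutive in $\sigma$ by construction. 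Getting the bookkeeping of "start near the left end, end near the right end" exactly right — so that the width genuinely compresses to $ks$ — is the only delicate point; everything else is immediate from the two preceding propositions.
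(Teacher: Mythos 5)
Your plan ultimately rests on producing an occurrence of $\alpha$ itself whose width is exactly $ks$, and this is where the argument breaks: an indecomposable pattern can occur in $\sigma$ while \emph{every} occurrence has width greater than $ks$. Take $\alpha=321$ (so $k=3$, $s=3$, $ks=9$) and let $\sigma$ consist of three increasing runs of length ten arranged in decreasing order of values, $\sigma = 21\,22\cdots30\;11\,12\cdots20\;1\,2\cdots10$. Any occurrence of $321$ must use one point from each run (two points of the same run are in increasing order), so its width is at least $12>ks$; no amount of peeling can compress an occurrence of $\alpha$ to width $ks$. The consecutive pattern the proposition promises does exist here --- e.g.\ $\sigma(8)\ldots\sigma(11)=28\,29\,30\,11$ has three inversions in length four --- but it is not an occurrence of $\alpha$. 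The peeling step is also internally inconsistent: an occurrence at $[i,j]$ has points at both positions $i$ and $j$, so discarding a prefix or suffix of the interval necessarily destroys that occurrence, and Proposition~\ref{propIndecompInv} gives no help in finding a narrower one --- it is a lower bound on $\inv(\sigma[i,j])$ \emph{given} a wide occurrence, not an existence statement for narrow occurrences.

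The averaging idea you abandoned is in fact much closer to the paper's argument; the missing ingredient is to stop insisting that the short window contain $\alpha$. The paper sets $t=\inv(\sigma[i,j])\geqs s+w-k$ (Proposition~\ref{propIndecompInv}), partitions the inversion sequence $e_{\sigma[i,j]}$ into $d=\floor{t/s}$ consecutive blocks of nearly equal length, and applies the pigeonhole principle: since the total weight of $e_{\sigma[i,j]}$ is $t$ and $t/d\geqs s$, some block has weight at least $s$, while the bounds $w\leqs k+t-s$ and $d\geqs(1+t-s)/s$ force every block to have length at most $ks$. The lower bound on $t$ coming from indecomposability is exactly what defeats the ``all inversions might be long-range'' worry that stalled your sliding-window attempt: it guarantees enough inversion weight per unit length for the pigeonhole to bite. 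So the correct proof extracts \emph{some} short, inversion-dense consecutive factor, with no claim that it is related to $\alpha$; your reduction to a narrow occurrence of $\alpha$ cannot be repaired.
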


\begin{proof}
Suppose $\alpha$ occurs at $[i,j]$ in $\sigma$ with width $w=j+1-i\geqs k$.
Let $t=\inv(\sigma[i,j])$.
By Proposition~\ref{propIndecompInv}, we have $t\geqs s+w-k$.
Note that $t\geqs s\geqs1$.

Let $d=\floor{t/s}$ and partition $e_{\sigma[i,j]}$ into $d$ consecutive blocks of almost equal length,
each block having length either $\floor{w/d}$ or $\ceil{w/d}$.
Since
$t/d\geqs s$,
by the pigeonhole principle, there is a block $b$ with $\|b\|\geqs s$.

Now, $w\leqs k+t-s$, and
\[
d \eq \floor{\frac{t}s} \;\geqs\; \frac{t}s-1+\frac1s \eq \frac{1+t-s}s.
\]
So the length of each block is bounded above by
\begin{align*}
\ceil{\frac{w}d}
\;<\; \frac{w}d+1
& \;\leqs\; 1+\frac{(k+t-s)s}{1+t-s} \\[6pt]
& \eq 1+s + \frac{(k-1)s}{1+t-s}
\;\leqs\; 1+s + (k-1)s
\eq 1 + ks .
\end{align*}
Thus, since this is a strict inequality, there is a consecutive subsequence of $e_{\sigma[i,j]}$ of length no more than $ks$ whose terms total at least $s$, and so
$\sigma$ contains a consecutive pattern with at least $s$ inversions of length at most~$ks$.
\end{proof}

We are now in a position to establish the thresholds for classical patterns in $\snm$.

\textbf{Theorem~\ref{thmMainResultClassical}.}
\emph{Let $\pi$ be any classical permutation pattern.
If $s$ is the greatest number of inversions in a component of $\pi$,
and $s'$ is the greatest number of inversions in a component of $\overline{\pi}$,
then for any positive constant~$a$,
\begin{align*}
\liminfty\prob{\snm \text{ contains }\pi} &\eq \begin{cases}
    0 & \quad \text{if~ } m \ll n^{1-1/s},\\
    1 & \quad \text{if~ } n^{1 - 1/s} \ll m \ll n,
  \end{cases}
\\[3pt]
\liminfty\prob{\snm \text{ contains }\pi} &\eq \begin{cases}
    1  & \quad  \text{if~ } n \gg \binom{n}{2} - m \gg n^{1-1/s'},\\
    0       & \quad \text{if~ } \binom{n}{2} - m \ll n^{1-1/s'},
  \end{cases}
\end{align*}
as long as $s>0$ and $s'>0$, respectively.}

\begin{proof}
  We first prove that below the threshold a.a.s. $\snm$ avoids $\pi$.
  Indeed, a.a.s. it contains no indecomposable pattern with $s$ inversions.

  By Proposition~\ref{propIndecompConsec}, if $\snm$ were to contain an indecomposable pattern $\alpha$ of length $k$ then it would also contain some consecutive pattern of length at most $ks$ with at least $s$ inversions.
  There are only finitely many such consecutive patterns.
  Now suppose that $m \ll n^{1-1/s}$.
  From Theorem~\ref{thmMainResultConsec}, we know that a.a.s. $\cnm$ contains no fixed finite set of consecutive patterns with $s$ or more inversions.
  Thus $\cnm$ avoids $\alpha$, and hence also avoids $\pi$.

  We now prove that above the threshold a.a.s. $\snm$ contains $\pi$.
  Suppose $\pi$ has sum decomposition $\pi=\alpha_1\oplus\ldots\oplus\alpha_r$. 

  Let $\C=\cnm$.
  For $0\leqs j\leqs r$, let $i_j=\floor{jn/r}$, and, for each $j\in[r]$, let $\C_j=\C[i_{j-1}+1,i_j]$.
  Thus, $\C_1,\ldots,\C_r$ is a partition of the terms of $\C$, each $\C_j$ having length $n_j\in\big\{ \!\floor{n/r}, \ceil{n/r}\! \big\}$.
  Let $m_j=\|\C_j\|$.

  Since $\|\C\|$ is constant,
  the covariance 
  between any two distinct terms of $\C$ is negative.
  Indeed, straightforward calculations show that 
  \[
  \var{\C(i)} \eq \frac{(n-1)m(m+n)}{n^2(n+1)} ,
  \qquad \text{and} \qquad
  \cov{\C(i_1),\C(i_2)} \eq -\frac{m(m+n)}{n^2(n+1)} \text{~~if $i_1\neq i_2$}.
  \]
  \together5
  Hence,
  \[
  \var{{m_j}/{n_j}}
  \eq {\var{m_j}}/{n_j^2}
  \;<\; \, n_j\, \var{\C(i)} / n_j^2
  \;\sim\; \frac{rm(m+n)}{n^3} ,
  \]
  which tends to zero as long as $m\ll n^{3/2}$.
  Thus (by Chebyshev's inequality),
  for this range of values for $m$
  the sum of terms in each $\C_j$ satisfies a law of large numbers.

  Thus, for each~$j$ and any $\veps>0$, a.a.s. we have
  $m_j>(1-\veps)m/r$.
  Therefore, if
  $m\gg n^{1 - 1/s}$,
  then
  $m_j\gg n_j^{1 - 1/s}$ for each $j\in[r]$.

  \together2
  Thus, if
  $n^{1 - 1/s} \ll m \ll n$, for each $j\in[r]$, we have the following sequence of implications:
  \begin{bullets}
    \item[$\bullet$] By Proposition~\ref{propThreshCnmContainsExactPatt}, a.a.s. $\C_{n_j,m_j}$ contains a consecutive occurrence of $e_{\alpha_j}$.
    \item[$\bullet$] Thus a.a.s. $\C=\cnm$ contains consecutive occurrences of $e_{\alpha_1},\ldots e_{\alpha_r}$ in that order.
    \item[$\bullet$] Since, by Proposition~\ref{propCnmAnInvSeq}, $\cnm$ is a.a.s. an inversion sequence, a.a.s. $\enm$ contains consecutive occurrences of $e_{\alpha_1},\ldots e_{\alpha_r}$ in that order.
    \item[$\bullet$] By Proposition~\ref{propPosjinInvPosjinPerm}, these correspond to occurrences of $\alpha_1,\ldots,\alpha_r$ as consecutive patterns in $\snm$, such that no point of $\snm$ is to the upper left of any point in any of these occurrences.
    \item[$\bullet$] Thus a.a.s. $\pi=\alpha_1\oplus \ldots \oplus \alpha_r$ occurs in $\snm$.
  \end{bullets}

The threshold for the disappearance of $\pi$ then follows
because
$\snm[\binom{n}2-m]$ has the same distribution as $\overline{\snm}$.
\end{proof}

Unlike with consecutive patterns, classical patterns having the same length and number of inversions need share neither threshold. 
See Table~\ref{tblClassical2} for an illustration of this.

\begin{table}[ht]
  \centering\small
  \renewcommand{\arraystretch}{1.3}
  \begin{tabular}{c|c|c}
              & Pattern &   \\ \hline
    $n^{2/3}$ & 321654  & $\binom{n}2-n^{8/9}$  \\
    $n^{4/5}$ & 423165  & $\binom{n}2-n^{8/9}$  \\
    $n^{8/9}$ & 561324  & $\binom{n}2-n^{4/5}$  \\
    $n^{8/9}$ & 456123  & $\binom{n}2-n^{2/3}$  \\
    \hline
  \end{tabular}
  \caption{Thresholds in $\snm$ for the appearance and disappearance of four classical patterns of length six with six inversions}\label{tblClassical2}
\end{table}

We can say a little more about the appearance of classical patterns.
As the random permutation $\snm$ evolves, indecomposable patterns appear first as consecutive patterns.
Let us say that a component $\alpha$ of a classical pattern $\pi$ is \emph{dominant} if no other component of $\pi$ has more inversions than $\alpha$.
Then for an arbitrary classical pattern, we have the following behaviour.
\begin{prop}
  Let $\pi$ be any classical permutation pattern whose dominant components have $s$ inversions.
  If $n^{1 - 1/s} \ll m\ll n^{1 - 1/(s+1)}$, then
  asymptotically almost surely, in every occurrence of $\pi$ in~$\snm$, each dominant component of $\pi$ occurs consecutively.
\end{prop}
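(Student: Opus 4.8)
The plan is to rule out, for $m$ in this range, the event that $\snm$ has an occurrence of $\pi$ in which some dominant component is spread out over strictly more positions than its length. The idea is that a single interleaved point turns such a component into a \emph{bounded‑length} indecomposable pattern carrying one extra inversion, and no such pattern can appear while $m\ll n^{1-1/(s+1)}$. Concretely, I would fix an occurrence of $\pi=\alpha_1\oplus\ldots\oplus\alpha_r$ in $\snm$ together with a dominant component $\alpha=\alpha_q$, of length $k$ (so $k\geqs2$, since the dominant components have $s\geqs1$ inversions), occurring at positions $p_1<\ldots<p_k$. If $\alpha$ does not occur consecutively then $p_k-p_1\geqs k$, so there is a position $p^\star$ of $\snm$ with $p_a<p^\star<p_{a+1}$ for some $a\in[k-1]$ and $p^\star\notin\{p_1,\ldots,p_k\}$. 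Let $\beta$ be the permutation pattern of $\snm$ restricted to the $k+1$ positions $\{p_1,\ldots,p_k\}\cup\{p^\star\}$; then $\beta$ occurs in $\snm$ and contains $\alpha$ at its full width $k+1$, with the extra point at position $a+1$ of $\beta$, strictly between the first and last point of the copy of $\alpha$.

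Next I would establish two facts about $\beta$. First, $\beta$ is indecomposable: a nontrivial decomposition $\beta=\beta'\oplus\beta''$ would cut the copy of $\alpha$ into the $\alpha$-points landing in $\beta'$, which form a nonempty proper prefix by position that is also a bottom set by value, hence a nontrivial decomposition of $\alpha$ --- impossible. Second, $\inv(\beta)\geqs s+1$: since $p^\star$ lies strictly between the first and last position of the copy of $\alpha$ inside $\beta$, Proposition~\ref{propIndecompInv} applied to this copy of width $k+1$ gives $\inv(\beta)\geqs\inv(\alpha)+(k+1)-k=s+1$. As $k$ depends only on $\pi$, only finitely many patterns can arise as such a $\beta$: indecomposable permutations of length at most $k+1$ with at least $s+1$ inversions.

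It then remains to see that $\snm$ contains no such $\beta$ in the given range. By Proposition~\ref{propIndecompConsec}, containment of any such $\beta$ forces a consecutive pattern with at least $s+1$ inversions and bounded length; so there is a fixed finite family of consecutive patterns, each with at least $s+1$ inversions, such that the failure event above implies $\snm$ contains a member of that family. By Theorem~\ref{thmMainResultConsec}, a consecutive pattern with $t$ inversions has appearance threshold $n^{1-1/t}$, which is at least $n^{1-1/(s+1)}$ when $t\geqs s+1$; hence whenever $m\ll n^{1-1/(s+1)}$, a.a.s.\ $\snm$ avoids every member of the family, so a.a.s.\ no occurrence of $\pi$ has a dominant component that is spread out. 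Since the witnessing family does not depend on the occurrence, this handles all occurrences simultaneously. Finally, that $\snm$ does contain $\pi$ in this range is immediate from Theorem~\ref{thmMainResultClassical}, because $m\gg n^{1-1/s}$ and $m\ll n^{1-1/(s+1)}\ll n$.

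The main obstacle is the first fact about $\beta$: that threading a single point through the interior of an occurrence of an indecomposable pattern keeps the result indecomposable. This requires the short but careful observation that any split of $\beta$ must confine all $\alpha$-points to one side of the split and therefore split $\alpha$ itself; it is essentially the sum‑primeness of indecomposable patterns, but it must be applied to this slightly asymmetric configuration. Once that is in place, the rest is a routine combination of Propositions~\ref{propIndecompInv} and~\ref{propIndecompConsec} with the consecutive‑pattern thresholds of Theorem~\ref{thmMainResultConsec}, the crucial gain being that working on only $k+1$ points replaces a possibly enormous indecomposable pattern by one of bounded size.
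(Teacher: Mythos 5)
Your proof is correct and follows essentially the same route as the paper: reduce a nonconsecutive occurrence of a dominant component, via Propositions~\ref{propIndecompInv} and~\ref{propIndecompConsec}, to the presence of a bounded-length consecutive pattern with at least $s+1$ inversions, which is a.a.s.\ absent when $m\ll n^{1-1/(s+1)}$ by Theorem~\ref{thmMainResultConsec}. Your intermediate step of passing to the $(k+1)$-point indecomposable pattern $\beta$ before invoking Proposition~\ref{propIndecompConsec} is a small refinement that makes the ``only finitely many patterns arise'' claim fully explicit, but it does not change the argument in substance.
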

\begin{proof}
  Suppose $\alpha$ is a dominant component of $\pi$.
  By Propositions~\ref{propIndecompInv} and~\ref{propIndecompConsec}, if there is a nonconsecutive occurrence of $\alpha$ in $\snm$, then there is an occurrence of some consecutive pattern with at least $s+1$ inversions.
  If $m\ll n^{1 - 1/(s+1)}$ than a.a.s. this does not occur.
\end{proof}

A direct application of Theorems~\ref{thmMainResultConsec} and~\ref{thmMainResultClassical} yields the threshold for the longest decreasing subsequence in $\snm$ to have a given length.
\begin{cor}\label{corLDS}
If $\ell\geqs2$, then
\[
\liminfty\prob{\text{$\snm$ contains $\ell\ldots21$}} \eq \begin{cases}
    0 & \quad \text{if~ } m \ll n^{1-1/\binom{\ell}{2}},\\
    1 & \quad \text{if~ } n^{1 - 1/\binom{\ell}{2}} \ll m \ll n^{1+1/\ell} .
  \end{cases}
\]
\end{cor}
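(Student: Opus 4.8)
The plan is to read off both halves of the statement directly from the two main theorems already proved, after observing that the decreasing pattern $\delta_\ell:=\ell\,(\ell-1)\ldots21$ is simultaneously a consecutive pattern and a single-component classical pattern, so that both Theorem~\ref{thmMainResultConsec} and Theorem~\ref{thmMainResultClassical} apply to it. First I would record the two relevant facts about $\delta_\ell$: it is indecomposable for $\ell\geqs2$, since for every $k<\ell$ we have $\{\delta_\ell(1),\ldots,\delta_\ell(k)\}=\{\ell,\ell-1,\ldots,\ell-k+1\}\neq\{1,\ldots,k\}$; and $\inv(\delta_\ell)=\binom{\ell}{2}$. In particular $\delta_\ell$ has a single component, namely itself, so the parameter $s$ appearing in Theorem~\ref{thmMainResultClassical} equals $\binom{\ell}{2}$, and likewise $s=\inv(\delta_\ell)=\binom{\ell}{2}$ when $\delta_\ell$ is regarded as a consecutive pattern of length $k=\ell$ in Theorem~\ref{thmMainResultConsec}.

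For the lower range, when $m\ll n^{1-1/\binom{\ell}{2}}$, Theorem~\ref{thmMainResultClassical} with $s=\binom{\ell}{2}$ gives at once that a.a.s.\ $\snm$ avoids $\delta_\ell$, i.e.\ has no decreasing subsequence of length $\ell$. (Alternatively, one could argue directly: by Proposition~\ref{propIndecompConsec} a classical occurrence of the indecomposable pattern $\delta_\ell$ would force a consecutive pattern of length at most $\ell\binom{\ell}{2}$ with at least $\binom{\ell}{2}$ inversions, and Theorem~\ref{thmMainResultConsec} rules out any such fixed finite collection of consecutive patterns a.a.s.\ in this range.) For the upper range I would instead use the \emph{consecutive}-pattern theorem, because it provides the sharper window: by Theorem~\ref{thmMainResultConsec} applied to $\delta_\ell$ with $k=\ell$ and $s=\binom{\ell}{2}$, a.a.s.\ $\snm$ contains $\delta_\ell$ as a consecutive pattern whenever $n^{1-1/\binom{\ell}{2}}\ll m\ll n^{1+1/\ell}$. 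A consecutive occurrence is in particular a classical occurrence, so a.a.s.\ $\snm$ contains $\delta_\ell$. Since the two ranges share the common boundary $n^{1-1/\binom{\ell}{2}}$, the stated dichotomy follows by combining the two parts.

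There is no genuine obstacle here; the corollary is essentially immediate once one knows both main theorems. The only point requiring any care is the bookkeeping of which theorem supplies which boundary: the disappearance below $n^{1-1/\binom{\ell}{2}}$ is supplied by Theorem~\ref{thmMainResultClassical} (or Proposition~\ref{propIndecompConsec} together with Theorem~\ref{thmMainResultConsec}), whereas the presence of $\delta_\ell$ throughout the whole interval $n^{1-1/\binom{\ell}{2}}\ll m\ll n^{1+1/\ell}$ must come from Theorem~\ref{thmMainResultConsec}, since the classical theorem on its own would only give the weaker upper cutoff $m\ll n$, and $1+1/\ell>1$.
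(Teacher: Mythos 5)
Your proposal is correct and follows exactly the route the paper intends: the corollary is stated as ``a direct application of Theorems~\ref{thmMainResultConsec} and~\ref{thmMainResultClassical}'', with the avoidance below $n^{1-1/\binom{\ell}{2}}$ coming from the classical-pattern theorem (since $\ell\ldots21$ is indecomposable with $\binom{\ell}{2}$ inversions) and the containment up to $m\ll n^{1+1/\ell}$ coming from the consecutive-pattern theorem. Your bookkeeping of which theorem supplies which boundary matches the paper's argument.
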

Thus, if $0<\gamma<1$ and $m\sim n^\gamma$, then the length $\ell$ of the longest decreasing subsequence in $\snm$ depends on the value of the solution of the equation $\gamma = 1-1/\binom{d}{2}$:
\[
d \eq \frac12\!\left(\!1+\sqrt{\frac{9-\gamma}{1-\gamma}}\,\right) \! .
\]
If $d$ is not an integer, then a.a.s. $\ell=\floor{d}$, and every longest decreasing subsequence occurs consecutively.
If $d$ is an integer, then a.a.s. $\ell$ takes one of the two values $d-1$ or $d$.

We can also determine the threshold for the occurrence of an inversion with a given width.

\begin{cor}
If $w\geqs2$, then
\[
\liminfty\prob{\text{$\snm$ contains an inversion of width $w$}} \eq \begin{cases}
    0 & \quad \text{if~ } m \ll n^{1-1/(w-1)},\\
    1 & \quad \text{if~ } n^{1 - 1/(w-1)} \ll m \ll n^{1+1/w} .
  \end{cases}
\]
\end{cor}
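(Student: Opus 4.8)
The plan is to deduce this from Theorem~\ref{thmMainResultConsec}, along the same lines as Corollary~\ref{corLDS}. The first step is to rephrase the event: $\sigma$ contains an inversion of width~$w$ --- that is, a pair of positions $i$ and $j=i+w-1$ with $\sigma(i)>\sigma(j)$ --- exactly when $\sigma$ contains, as a consecutive pattern at some position, one of the permutations of $[w]$ whose first term exceeds its last term. There are only finitely many such length-$w$ patterns, so it will suffice to understand the appearance thresholds of this finite family and take the earliest one.

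Next I would pin down the least possible number of inversions in such a pattern. If $\pi$ is a permutation of $[w]$ with $\pi(1)=a>b=\pi(w)$, then $\pi(1)$ is inverted with each of the $a-1$ smaller values (all of which lie to its right) and $\pi(w)$ is inverted with each of the $w-b$ larger values (all of which lie to its left), and the pair $(\pi(1),\pi(w))$ gets counted in both tallies; hence $\inv(\pi)\geqs(a-1)+(w-b)-1=a-b+w-2\geqs w-1$ since $a>b$. This bound is attained, for instance by $\pi=23\ldots w1$, which has length $w$ and exactly $w-1$ inversions. So the relevant exponent is $1-1/(w-1)$, as in the statement.

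The corollary then follows directly. For the lower range, suppose $m\ll n^{1-1/(w-1)}$; each of the finitely many length-$w$ patterns $\pi$ with $\pi(1)>\pi(w)$ has $\inv(\pi)\geqs w-1$, so the ``avoidance'' case of Theorem~\ref{thmMainResultConsec} gives that a.a.s.\ $\snm$ avoids each such~$\pi$, and since a finite union of events of vanishing probability also has vanishing probability, a.a.s.\ $\snm$ contains no inversion of width~$w$. For the upper range, suppose $n^{1-1/(w-1)}\ll m\ll n^{1+1/w}$; applying Theorem~\ref{thmMainResultConsec} to $\pi=23\ldots w1$ (for which $s=w-1$ and $k=w$, so the hypothesis reads $n^{1-1/s}\ll m\ll n^{1+1/k}$) shows that a.a.s.\ $\snm$ contains $23\ldots w1$ as a consecutive pattern, and hence contains an inversion of width~$w$. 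I do not anticipate any genuine obstacle here: the only non-formal ingredient is the elementary inversion count of the second paragraph, and everything substantive is already contained in Theorem~\ref{thmMainResultConsec}. (Note that, unlike in Theorem~\ref{thmMainResultConsec}, one should not expect a clean closed form for the limiting probability at $m\sim an^{1-1/(w-1)}$, since several distinct length-$w$ patterns attain $w-1$ inversions and the event is their union.)
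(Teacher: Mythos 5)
Your proposal is correct and follows essentially the same route as the paper: both reduce the event to containment of one of the finitely many length-$w$ consecutive patterns with first term exceeding last term, observe that any such pattern has at least $w-1$ inversions (the paper cites Proposition~\ref{propIndecompInv} with $\alpha=21$, while you verify this special case by a direct count), and use $23\ldots w1$ with exactly $w-1$ inversions for the containment direction via Theorem~\ref{thmMainResultConsec}. The only difference is that you inline an elementary proof of the inversion lower bound rather than invoking the paper's general proposition, which is a cosmetic rather than substantive divergence.
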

\begin{proof}
  By Proposition~\ref{propIndecompInv}, if an inversion of width $w$ occurs at $[i,j]$ in $\sigma$ then $\sigma[i,j]$ has at least $w-1$ inversions.
  So, by Theorem~\ref{thmMainResultConsec}, below the threshold a.a.s. there is no occurrence of an inversion of width $w$ in $\snm$.
  On the other hand, the first and last terms of consecutive pattern $\pi=234\ldots w1$ 
  form an inversion of width $w$. Since $\inv(\pi)=w-1$, by Theorem~\ref{thmMainResultConsec}, above the threshold a.a.s. $\snm$ contains an occurrence of $\pi$ and hence also contains an inversion of width~$w$.
\end{proof}

Another consequence of Theorem~\ref{thmMainResultClassical} is that if $m$ grows sufficiently fast (e.g. $m=n/\log n$), then a.a.s. $\snm$ contains \emph{any} given classical pattern, or equivalently a.a.s. $\snm$ is not contained in any permutation class $\av(B)$ --- consisting of permutations avoiding the classical patterns in the set~$B$.

\begin{cor}
  If $n^{1-\delta}\ll m \ll n$ for every $\delta>0$, and $\pi$ is any classical permutation pattern, then asymptotically almost surely $\snm$ contains $\pi$.
\end{cor}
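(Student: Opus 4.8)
The plan is to reduce the statement to the second case of Theorem~\ref{thmMainResultClassical} by observing that a classical pattern $\pi$ with $s$ inversions in its largest component satisfies $s\leqs\inv(\pi)$, which is a fixed constant not depending on $n$. First I would fix an arbitrary classical pattern $\pi$ and let $s$ be the greatest number of inversions in any of its components; since $\pi$ has length at least one inversion-carrying component (if $\pi$ is the identity pattern $1\ldots k$ then $s=0$ and the claim is trivial, as the identity pattern is contained in every permutation once $n$ is large enough), we may assume $s\geqs1$. Then $1-1/s$ is a constant strictly less than~$1$; write $\gamma=1-1/s\in[0,1)$.

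Next I would invoke the hypothesis: we are told $n^{1-\delta}\ll m\ll n$ for \emph{every} $\delta>0$. Taking $\delta=1/s$ (a legitimate positive constant), the hypothesis gives $n^{1-1/s}\ll m$, and we also have $m\ll n$. Hence $m$ lies in exactly the regime $n^{1-1/s}\ll m\ll n$ appearing in the first branch of the conclusion of Theorem~\ref{thmMainResultClassical}. Applying that theorem, a.a.s. $\snm$ contains~$\pi$. Since $\pi$ was arbitrary, this completes the proof.

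There is essentially no obstacle here: the only mild point of care is the quantifier structure. The hypothesis ``$n^{1-\delta}\ll m\ll n$ for every $\delta>0$'' must be read as: for each fixed $\delta>0$, eventually $n^{1-\delta}\leqs$ (roughly) $m$. Since the pattern $\pi$ is fixed first, $s$ is a fixed constant, so we may legally instantiate $\delta=1/s$ and the asymptotic inequalities go through. I would also remark, as the corollary's statement does, that this is equivalent to saying $\snm$ is a.a.s. not contained in any proper permutation class $\av(B)$: if $\snm$ avoided every pattern in a nonempty set $B$, it would in particular avoid some fixed pattern $\pi\in B$, contradicting what we have just shown. The example $m=n/\log n$ satisfies the hypothesis since $n/\log n\gg n^{1-\delta}$ for every $\delta>0$ (as $n^{\delta}\gg\log n$) while $n/\log n\ll n$.
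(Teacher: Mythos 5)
Your argument is correct and is exactly what the paper intends: fix $\pi$, note that $s$ is a constant, instantiate $\delta=1/s$ in the hypothesis to get $n^{1-1/s}\ll m\ll n$, and apply Theorem~\ref{thmMainResultClassical}; the paper offers no further proof because it regards this as immediate. The only slip is your justification of the $s=0$ case: the identity pattern $1\ldots k$ is \emph{not} contained in every sufficiently long permutation (the decreasing permutation has no increasing subsequence of length $2$), but the conclusion still holds here because any $n$-permutation avoiding $1\ldots k$ can be partitioned into at most $k-1$ decreasing subsequences and therefore has at least order $n^2/(k-1)$ inversions, which is impossible when $m\ll n$.
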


The thresholds for vincular patterns generalise those for consecutive and classical patterns.
Recall the definition of a supercomponent of a vincular pattern from page~\pageref{defVinc}.

\textbf{Theorem~\ref{thmMainResultVinc}.}
\emph{Let $\pi$ be any vincular permutation pattern.
If $s$ is the greatest number of inversions in a supercomponent of $\pi$,
and $s'$ is the greatest number of inversions in a supercomponent of $\overline{\pi}$,
then for any positive constant~$a$,
\begin{align*}
\liminfty\prob{\snm \text{ contains }\pi} &\eq \begin{cases}
    0 & \quad \text{if } m \ll n^{1-1/s},\\
    1 & \quad \text{if } n^{1 - 1/s} \ll m \ll n,
  \end{cases}
\\[3pt]
\liminfty\prob{\snm \text{ contains }\pi} &\eq \begin{cases}
    1  & \quad  \text{if } n \gg \binom{n}{2} - m \gg n^{1-1/s'},\\
    0  & \quad \text{if } \binom{n}{2} - m \ll n^{1-1/s'},
  \end{cases}
\end{align*}
as long as $s>0$ and $s'>0$, respectively.}

\begin{proof}
  The proof is entirely analogous to that of Theorem~\ref{thmMainResultClassical}, with ``supercomponent'' replacing ``component''.
  Versions of Propositions~\ref{propIndecompInv} and~\ref{propIndecompConsec} are also required with ``supercomponent'' replacing ``indecomposable classical pattern''.
  We leave the (straightforward) details to the reader.
\end{proof}


\section{Open questions}\label{sectGapMallows}

As noted in the introduction, our methods do not enable us to show that $\snm$ a.a.s. contains a given pattern for all values of $m$ between the thresholds for its appearance and disappearance.
This is because our approach builds on results concerning random compositions,
and the threshold for the disappearance of an exact pattern of length $k$ in $\cnm$ is $m\sim n^{1+1/k}$ (Proposition~\ref{propThreshCnmContainsExactPatt}).
We also remarked that even the asymptotics of $|\ssnm|$ appear not to have been established when $n\ll m\ll n^2$.
When $m\sim a n^2$, we do know a bit more.
Specifically, the permuton approach (see~\cite{KKRW2020}) is sufficient to establish that any \emph{classical} pattern is present a.a.s.
However, permutons tell us nothing about the local structure (see~\cite{BevanScaleLimits,BP2020}), so cannot help with consecutive patterns.
\begin{prop}
  Let $\pi$ be any classical permutation pattern, and suppose $m\sim an^2$ for some constant $a\in(0,\frac12)$.
  Then $\liminfty \prob{\text{$\snm$ contains $\pi$}}=1$.
\end{prop}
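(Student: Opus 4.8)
The plan is to invoke the permuton limit of $\snm$ established by Kenyon, Král', Radin and Winkler~\cite{KKRW2020}. When $m\sim an^2$ with $a\in(0,\tfrac12)$, the inversion density $m/\binom n2$ converges to $\tau:=2a\in(0,1)$, while both $m$ and $\binom n2-m$ tend to infinity; this is precisely the regime in which \cite{KKRW2020} compute the limit shape of $\snm$. Their result is that $\snm$ converges, in the permuton sense, to a deterministic limiting permuton $\mu_\tau$ whose density $g_\tau$ is a continuous function, strictly positive everywhere on the open unit square $(0,1)^2$ (it degenerates to a monotone permuton only in the extreme cases $\tau\in\{0,1\}$).

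First I would record the standard fact that permuton convergence of a sequence of random permutations implies, for every fixed pattern $\pi$, convergence in probability of the density of $\pi$ in the permutations to the density $t(\pi,\mu_\tau)$ of $\pi$ in the limiting permuton. Next I would check that $t(\pi,\mu_\tau)>0$: writing $k$ for the length of $\pi$, the quantity $t(\pi,\mu_\tau)$ is the probability that $k$ independent $\mu_\tau$-distributed points, listed in increasing order of first coordinate, form an occurrence of $\pi$; this equals the integral of $g_\tau(x_1,y_1)\cdots g_\tau(x_k,y_k)$ over the open, nonempty — hence positive-measure — subset of $(0,1)^{2k}$ on which the points realise $\pi$, and is therefore strictly positive since $g_\tau>0$ throughout. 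Combining these two facts, the density of $\pi$ in $\snm$ converges in probability to the positive constant $t(\pi,\mu_\tau)$; in particular, a.a.s.\ the number of occurrences of $\pi$ in $\snm$ is positive (indeed at least $\tfrac12\,t(\pi,\mu_\tau)\binom nk$), so a.a.s.\ $\snm$ contains $\pi$.

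I expect the main obstacle to be purely expository: citing \cite{KKRW2020} with enough precision to be sure that (i) their result is genuinely a statement of convergence of the empirical permuton of $\snm$, not merely of some marginal or some expected quantity, and (ii) the limiting density is strictly positive on all of $(0,1)^2$ rather than just nonnegative. If one preferred to argue positivity from first principles, one could note that $\mu_\tau$ is characterised as the maximiser of an entropy functional subject to a prescribed inversion density, and a maximiser cannot vanish on a set of positive measure without contradicting optimality; but quoting the explicit density from \cite{KKRW2020} is cleaner. A minor technical point is that our hypothesis only pins down $m$ asymptotically, whereas one might want a statement about a prescribed number of inversions — this is harmless, since the limiting permuton depends continuously on $\tau$ and the conclusion $t(\pi,\mu_\tau)>0$ is robust. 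Note finally that, unlike the proof of Theorem~\ref{thmMainResultClassical}, this argument says nothing about consecutive patterns, in keeping with the fact that permutons do not detect local structure.
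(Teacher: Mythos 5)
Your proposal is correct and follows exactly the route the paper intends: the paper offers no written proof of this proposition, only the preceding remark that ``the permuton approach (see \cite{KKRW2020}) is sufficient,'' and your argument --- permuton convergence of $\snm$ at inversion density $2a$, positivity of the limiting density on $(0,1)^2$, hence positive limiting density $t(\pi,\mu_\tau)$ and a.a.s.\ containment --- is precisely the fleshed-out version of that remark. Your closing observation that this says nothing about consecutive patterns matches the paper's own caveat.
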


Nevertheless, there seems no reason to doubt that any pattern is a.a.s. present in $\snm$ between its two thresholds.
Somewhat surprisingly, there appears to be no simple strategy for proving this.
\begin{conj}
  Let $\pi$ be any consecutive permutation pattern,
  and let $s=\inv(\pi)$ and $s'=\inv(\overline{\pi})$.
  If $n^{1 - 1/s} \ll m$ and $\binom{n}{2} - m \gg n^{1-1/s'}$,
  then
  $\liminfty\prob{\snm \text{ contains }\pi} = 1$.
\end{conj}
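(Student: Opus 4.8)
The plan is to close the range left open between the two thresholds of Theorem~\ref{thmMainResultConsec}. Write $k=|\pi|$ and $s=\inv(\pi)$. Since $\snm[\binom n2-m]$ has the distribution of $\overline{\snm}$, we may assume $m\leqs\tfrac12\binom n2$, and Theorem~\ref{thmMainResultConsec} already gives the conclusion for $m\ll n^{1+1/k}$; so it remains to prove that $\snm$ a.a.s.\ contains $\pi$ whenever $m$ is at least of order $n^{1+1/k}$ and at most $\tfrac12\binom n2$ --- in particular, $m\gg n$ throughout. I would do this by a second moment argument on $X=X_{n,m}$, the number of positions $j\in[n+1-k]$ at which $\pi$ occurs as a consecutive pattern in $\snm$: it suffices to show $\expec{X}\to\infty$ and $\var{X}=o\bigl(\expec{X}^2\bigr)$.

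\emph{First moment.} By the shift-invariance of Proposition~\ref{propSnmShiftInvariance} together with Proposition~\ref{propSnmtoEnm}, $\expec{X}=(n+1-k)\,p_{n,m}$ where $p_{n,m}=\prob{e_\pi\text{ occurs at position }1\text{ in }\enm}$. Since $\eenm$ consists of the sequences $(e(1),\ldots,e(n))$ with $0\leqs e(i)<i$, prescribing the first $k$ terms to be $e_\pi(1),\ldots,e_\pi(k)$ (which uses up weight $s$) gives the exact formula
\[
p_{n,m}\eq\frac{[q^{m-s}]\bigl([n]_q!/[k]_q!\bigr)}{[q^{m}]\,[n]_q!},
\qquad
[n]_q!\eq\prod_{i=1}^{n}\bigl(1+q+\cdots+q^{i-1}\bigr)\eq\sum_{\sigma\in\SSS_n}q^{\inv(\sigma)},
\]
so that the denominator is exactly $|\ssnm|$. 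Writing $[k]_q!=\sum_{r\geqs0}a_rq^r$ with $a_r\geqs0$ and $\sum_r a_r=k!$, the denominator equals $\sum_r a_r\,[q^{m-r}]\bigl([n]_q!/[k]_q!\bigr)$, reducing the estimate to comparing the coefficients of $[n]_q!/[k]_q!$ at exponents $m-r$ for $0\leqs r\leqs\binom k2$. For $m\gg n$ the saddle point for extracting the coefficient of $q^m$ from $[n]_q!$ (and from $[n]_q!/[k]_q!$) is $q^\star=(1+o(1))\,m/(m+n)\to1$, so consecutive coefficients differ by the factor $1/q^\star=1+o(1)$; hence $p_{n,m}=(1+o(1))/k!$ and $\expec{X}\sim n/k!\to\infty$, uniformly over the range in question.

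\emph{Second moment.} Iterating the bijection $\Psi$ from the proof of Proposition~\ref{propSnmShiftInvariance} shows that $\prob{\pi\text{ occurs at }j_1\text{ and at }j_2\text{ in }\snm}$ depends only on the gap $g=|j_1-j_2|$, so $\expec{X(X-1)}=2\sum_{g\geqs1}(n-k-g)_+\,\prob{\pi\text{ at }1\text{ and at }1+g}$. The $O(n)$ overlapping pairs ($g<k$) contribute $O(\expec{X})=o(\expec{X}^2)$, and the task is to show $\prob{\pi\text{ at }1\text{ and at }1+g}\leqs(1+o(1))\,p_{n,m}^2$ for $g\geqs k$. This does \emph{not} reduce cleanly to inversion sequences: Proposition~\ref{propPosjinInvPosjinPerm} relates occurrences of $e_\pi$ in $\enm$ to occurrences of $\pi$ in $\snm$ in one direction only, and such inversion-sequence occurrences are far too rare (their expected number is $O(1)$, not $\Theta(n)$). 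I would instead pass through the Mallows model, in which $\snm$ is $\mathrm{Mal}_{n,q}$ conditioned on $\inv=m$: for the value of $q$ at which the expected number of inversions under $\mathrm{Mal}_{n,q}$ equals $m$ --- where again $q\to1$ --- the Mallows process builds the permutation from left to right with a decay-of-correlations property, so under $\mathrm{Mal}_{n,q}$ the events $\{\pi\text{ at }1\}$ and $\{\pi\text{ at }1+g\}$ are asymptotically independent; a local limit theorem for $\inv$ under $\mathrm{Mal}_{n,q}$ then transfers both this estimate and the first-moment computation from $\mathrm{Mal}_{n,q}$ to $\snm$, the conditioning costing only a factor $1+o(1)$ in the relevant sums. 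With $\var{X}=o\bigl(\expec{X}^2\bigr)$, Chebyshev's inequality gives $X\geqs1$ a.a.s.

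The crux --- and what I expect to be the main obstacle --- is the analytic input. Even the order of $|\ssnm|=[q^m]\,[n]_q!$ is not known for $n\ll m\ll n^2$, and both moments rest on a \emph{local} central limit theorem for $\inv$ (for the coefficients of the $q$-factorial, or equivalently for $\inv$ under $\mathrm{Mal}_{n,q}$) that is uniform over this range, with enough precision to compare neighbouring coefficients and to control the Mallows conditioning. Establishing it presumably requires saddle-point analysis of $\frac1{2\pi i}\oint[n]_q!\,q^{-m-1}\,dq$, with the saddle tending to~$1$ at a rate governed by $m/n^2$. On top of this, the decay-of-correlations estimate for consecutive patterns under $\mathrm{Mal}_{n,q}$ with $q\to1$ needs its own argument. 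There is no monotone shortcut: by Observation~\ref{obsIncrPerm}, adding one inversion merely swaps two entries of $\snm$ and can destroy a consecutive occurrence of $\pi$, so ``$\snm$ contains $\pi$'' is not an increasing property and no coupling of $\snm$ with $\snm[m+1]$ transfers containment directly.
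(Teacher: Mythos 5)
This statement is a \emph{conjecture}: the paper offers no proof of it, and indeed explicitly remarks that ``there appears to be no simple strategy for proving this'' and lists the asymptotics of $|\ssnm|$ for $n\ll m\ll n^2$ among its open questions. So there is nothing to compare your argument against, and the real question is whether your proposal closes the gap. It does not. Your reductions are sound and several observations are genuinely valuable: the exact first-moment identity $p_{n,m}=[q^{m-s}]\bigl([n]_q!/[k]_q!\bigr)\big/[q^m][n]_q!$ is correct; iterating $\Psi$ from Proposition~\ref{propSnmShiftInvariance} does show that pair probabilities depend only on the gap; you are right that the inversion-sequence route of Proposition~\ref{propPosjinInvPosjinPerm} cannot supply the second moment (occurrences of $e_\pi$ in $\enm$ are far too rare once $m\gg n^{1+1/k}$); and you are right that Observation~\ref{obsIncrPerm} shows containment is not monotone, so no coupling shortcut exists. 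But the argument then rests on two unproved analytic inputs, which you yourself flag as ``the crux'': (i) a local limit theorem for the Mahonian distribution (equivalently, that consecutive coefficients of $[n]_q!$ and $[n]_q!/[k]_q!$ have ratio $1+o(1)$), uniform over the entire range $n^{1+1/k}\lesssim m\leqs\frac12\binom n2$, with enough precision to control the Mallows conditioning; and (ii) asymptotic independence of the events $\{\pi\text{ at }1\}$ and $\{\pi\text{ at }1+g\}$ under the Mallows measure as $q\to1$, uniformly in $g$ and in $q$ over the whole range. Neither is established here, and (i) in particular subsumes the open problem of determining $|\ssnm|$ for $n\ll m\ll n^2$.

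Concretely, the step ``the conditioning cost[s] only a factor $1+o(1)$ in the relevant sums'' is where the proof would actually have to live: transferring a second-moment bound from the Mallows product measure to the conditional measure $\snm$ requires comparing $\prob{\inv=m\mid \pi\text{ at }1\text{ and }1+g}$ with $\prob{\inv=m}$ uniformly in $g$, which is precisely a bivariate local CLT with error control, not a routine appeal. Likewise ``decay of correlations'' for consecutive patterns under Mallows with $q=1-\Theta(\omega/n)$ is asserted, not proved; the left-to-right insertion construction makes $e_\pi(j)$ independent across $j$, but the event ``$\pi$ occurs at position $1+g$'' is not a function of $e(1+g),\ldots,e(k+g)$ alone, so independence of the inversion-sequence entries does not immediately give independence of pattern occurrences. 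What you have written is a credible research programme --- arguably the natural one --- but as a proof it is missing its entire analytic core, at exactly the points the paper identifies as open.
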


Furthermore, it is natural to suppose that the expected number of occurrences in $\snm$ of a given pattern is a unimodal function of $m$, first increasing and then decreasing.
Given a permutation pattern $\pi$, let $\bbE_{n,m}(\pi)$ denote the expected number of occurrences of $\pi$ in~$\snm$.
\begin{conj}\label{conjSnmUnimodal}
  Let $\pi$ be any consecutive permutation pattern.
  If $n\geqs |\pi|+2$, then the sequence
  $
  \bbE_{n,0}(\pi) ,\, \bbE_{n,1}(\pi) ,\,  \ldots  ,\, \bbE_{n,{\binom{n}2}}(\pi)
  $
  is unimodal.
\end{conj}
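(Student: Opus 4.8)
The plan is to reduce the statement to a unimodality assertion about ratios of $q$-factorial coefficients. By Proposition~\ref{propSnmShiftInvariance} and linearity of expectation, $\bbE_{n,m}(\pi)=(n+1-k)\prob{\pi\text{ occurs at position }1\text{ in }\snm}$, and by Proposition~\ref{propSnmtoEnm} this equals $(n+1-k)\prob{e_\pi\text{ occurs at position }1\text{ in }\enm}$. Prescribing the first $k$ entries of a member of $\eenm$ to be $e_\pi$ leaves its remaining $n-k$ entries free, subject only to the usual bounds and to the total being $m$; hence, writing $s=\inv(\pi)$,
\[
\bbE_{n,m}(\pi)\eq(n+1-k)\,\frac{N_{n,k}(m-s)}{M_n(m)},
\]
where $M_n(m)=|\eenm|=[q^m]\prod_{i=1}^n(1+q+\cdots+q^{i-1})$ is the number of $n$-permutations with $m$ inversions, and $N_{n,k}(r)=[q^r]\prod_{i=k+1}^n(1+q+\cdots+q^{i-1})$ counts the sequences $(a_{k+1},\dots,a_n)$ with $0\leqs a_i<i$ and $\sum a_i=r$. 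In particular $\bbE_{n,m}(\pi)$ depends on $\pi$ only through $k=|\pi|$ and $s=\inv(\pi)$. Each factor $1+q+\cdots+q^{i-1}$ has a log-concave coefficient sequence with no internal zeros, and this property is preserved under multiplication of polynomials, so $(N_{n,k}(r))_r$ and $(M_n(m))_m$ are both symmetric and log-concave with interval support, namely $\{0,\dots,\binom{n}{2}-\binom{k}{2}\}$ and $\{0,\dots,\binom{n}{2}\}$. Splitting off the first $k$ factors also gives $M_n(m)=\sum_{j=0}^{\binom{k}{2}}M_k(j)\,N_{n,k}(m-j)$, where $M_k(j)$ is the number of $k$-permutations with $j$ inversions; note $M_k(s)\geqs1$.

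It then suffices to show that $g(m):=N_{n,k}(m-s)/M_n(m)$ is unimodal for $0\leqs m\leqs\binom{n}{2}$. Since $M_n>0$ on this range, the sign of $g(m+1)-g(m)$ is the sign of
\begin{align*}
\Delta(m) &\eq N_{n,k}(m{+}1{-}s)\,M_n(m)-N_{n,k}(m{-}s)\,M_n(m{+}1)\\
&\eq \sum_{j=0}^{\binom{k}{2}}M_k(j)\Big[N_{n,k}(m{+}1{-}s)\,N_{n,k}(m{-}j)-N_{n,k}(m{-}s)\,N_{n,k}(m{+}1{-}j)\Big].
\end{align*}
Writing $N=N_{n,k}$ and, for $t\geqs1$, $\Gamma_t(v):=N(v+1)N(v+t)-N(v)N(v+t+1)$, log-concavity of $N$ (equivalently, monotonicity of the ratios $N(r)/N(r-1)$) gives $\Gamma_t\geqs0$ everywhere, and a short calculation, valid also at the ends of the support, rewrites the $j$th bracket above as $\Gamma_{s-j}(m-s)$ when $j<s$, as $0$ when $j=s$, and as $-\Gamma_{j-s}(m-j)$ when $j>s$. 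Hence $\Delta(m)=\Delta^+(m)+\Delta^-(m)$, with $\Delta^+(m)=\sum_{t\geqs1}M_k(s-t)\,\Gamma_t(m-s)\geqs0$ and $\Delta^-(m)=-\sum_{t\geqs1}M_k(s+t)\,\Gamma_t(m-s-t)\leqs0$. So the task is to show that $\Delta$ changes sign exactly once, from $+$ to $-$, as $m$ increases. Two harmless reductions are available: $\bbE_{n,m}(\pi)=0$ outside the interval $s\leqs m\leqs\binom{n}{2}-\binom{k}{2}+s$, so there $g\equiv0$ and only flanking zeros are added; and the complement symmetry $\bbE_{n,m}(\pi)=\bbE_{n,\binom{n}{2}-m}(\overline\pi)$ with $\inv(\overline\pi)=\binom{k}{2}-s$ lets us assume $s\leqs\tfrac12\binom{k}{2}$, the cases $s\in\{0,\binom{k}{2}\}$ being then trivial since $g$ is monotone.

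The single sign change is the main obstacle, and the hypothesis $n\geqs|\pi|+2$ must enter precisely here. It is genuinely needed: for $\pi=132$ (so $k=3$, $s=1$) and $n=4$ one has $\big(\bbE_{4,m}(132)\big)_{m=0}^{6}=\big(0,\tfrac{2}{3},\tfrac{2}{5},\tfrac{1}{3},\tfrac{2}{5},0,0\big)$, which is not unimodal. The reason is that for $n=k+1$ the numerator $N_{k+1,k}$ is the flat profile $(1,1,\dots,1)$, with no curvature to offset the convexity of $1/M_n$ around its minimum at $m=\tfrac12\binom{n}{2}$, so on the support interval $g$ inherits a valley from $1/M_n$. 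When $n\geqs k+2$, by contrast, $N_{n,k}$ is a nontrivial product of at least two factors, and the plan is to use that extra curvature, together with $s\leqs\tfrac12\binom{k}{2}$, to show that $-\Delta^-(m)/\Delta^+(m)$ is non-decreasing on $\{m:\Delta^+(m)>0\}$, which forces a single sign change. I would split this into two parts: first, for $m$ up to the index where $N_{n,k}(m-s)$ stops increasing, compare $\Delta^+(m)$ against $-\Delta^-(m)$ term by term, using that $s\leqs\tfrac12\binom{k}{2}$ and $n\geqs k+2$ leave enough of the support to the right, and using structural facts about the gaps $\Gamma_t$ — each has interval support, and one expects each to be unimodal in its argument, being built from the polynomials $1+q+\cdots+q^{i-1}$ — so as to get $\Delta(m)\geqs0$ on that range; then handle larger $m$ either by the complement symmetry or by running the mirror-image estimate directly.

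An appealing alternative would be to prove the stronger statement that $g$ is log-concave on its support — equivalently, that convolving $N_{n,k}$ with the symmetric log-concave sequence $M_k$ does not increase its log-concavity ratio at the appropriate shifted index — but this fails for $n=k+1$ and follows from no standard closure property of log-concavity; indeed $g$ itself need not be log-concave, already for $\pi=132$ and $n=5$, so log-concavity of $M_n$ and $N_{n,k}$ alone cannot suffice. Nor is it enough to observe that $\bbE_{n,m}(\pi)/(n+1-k)=\prob{X_m=s}/M_k(s)$, where $X_m$ counts the inversions of $\snm$ among its first $k$ positions and the family $(\mathrm{law}\,X_m)_m$ increases in the monotone-likelihood-ratio order, since that order does not by itself make individual point masses unimodal. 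The cleanest resolution, if one exists, may be a combinatorial injection — available on precisely one side of the peak — between the sets of pairs counted by $N_{n,k}(m{+}1{-}s)\,M_n(m)$ and by $N_{n,k}(m{-}s)\,M_n(m{+}1)$, built from the inversion-incrementing operation of Observation~\ref{obsIncrPerm}; constructing such an injection, and locating the peak exactly in terms of $n$, $k$ and $s$, is what I expect to be the real difficulty.
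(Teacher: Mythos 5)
The first thing to note is that this statement is not a theorem of the paper: it appears as Conjecture~\ref{conjSnmUnimodal} in the section of open questions, and the authors offer no proof, so there is no argument of theirs to compare yours against. Judged on its own terms, your reduction is correct and genuinely useful. The identity $\bbE_{n,m}(\pi)=(n+1-k)\,N_{n,k}(m-s)/M_n(m)$ does follow from Proposition~\ref{propSnmShiftInvariance}, Proposition~\ref{propSnmtoEnm}, and the fact (Propositions~\ref{propPos1inPermPos1inInv} and~\ref{propPosjinInvPosjinPerm}) that an occurrence of $\pi$ at position~$1$ is equivalent to the first $k$ terms of the inversion sequence equalling $e_\pi$ exactly; the convolution identity $M_n(m)=\sum_j M_k(j)N_{n,k}(m-j)$ and your $\Gamma_t$ decomposition of $\Delta(m)$ are algebraically right; and your computation that $\big(\bbE_{4,m}(132)\big)_{m=0}^{6}=\big(0,\tfrac23,\tfrac25,\tfrac13,\tfrac25,0,0\big)$ is correct and shows concretely why the hypothesis $n\geqs|\pi|+2$ cannot be dropped. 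The observations that $g$ need not be log-concave and that the monotone-likelihood-ratio ordering does not yield unimodality of point masses are also correct and correctly rule out the most tempting shortcuts.

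Nevertheless, the proposal is not a proof. All of the difficulty has been concentrated into the assertion that $\Delta(m)$ changes sign exactly once, and precisely at that point the text switches from proving to planning: ``I would split this into two parts,'' ``one expects each to be unimodal,'' ``constructing such an injection \ldots is what I expect to be the real difficulty.'' Neither the proposed monotonicity of $-\Delta^-(m)/\Delta^+(m)$ nor the unimodality of the gap sequences $\Gamma_t$ nor the combinatorial injection is established, and the hypothesis $n\geqs|\pi|+2$ --- which you rightly identify as the crux --- is never actually used in a completed estimate. So there is a genuine gap: the single-sign-change step is exactly the open problem, and it remains open in your write-up. What you have is an honest and correct reformulation of the conjecture as a statement about ratios of Mahonian-type coefficients, together with a sharp diagnosis of where its difficulty lies; that is worth recording, but it does not settle the conjecture.
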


Here are some further questions motivated by our considerations.

\begin{question}
  What are the asymptotics of $|\ssnm|$ when $n\ll m\ll n^2$?
\end{question}

\begin{question}[{see Acan and Pittel~\cite{AP2013}}]
  Is it possible to define an \emph{explicit} Markov process that produces $\snm[m+1]$ from $\snm$?
  Can this be achieved in a natural way?
\end{question}

\begin{question}
  What is the length of the longest decreasing subsequence in $\snm$ for ranges of $m\gg n^{1-\delta}$ for every $\delta>0$?
\end{question}

The following two questions were addressed unsuccessfully in~\cite{BevanLocallyUniform}.
\begin{question}
  Given $k=k(n)$, what is the threshold in $\snm$ for each consecutive pattern $\pi$ of length~$k$ to be asymptotically equally likely to occur? That is, for each $\pi\in\SSS_k$ to satisfy
  \[
  \prob{\text{$\pi$ occurs at position 1 in $\snm$}} \sim 1/k! .
  \]
\end{question}

\begin{question}
  Given $w=w(n)$, what is the threshold for the two terms $\snm(1)$ and $\snm(w)$ to be equally likely to form an inversion (of width $w$)
  as not?
  That is, for
  \[
  \liminfty \prob{\snm(1)>\snm(w)} \eq \tfrac12 .
  \]
\end{question}

\subsection*{Mallows permutations}

One model that may help in addressing such questions is the \emph{Mallows permutation}~\cite{Mallows1957}, introduced as a statistical model for ranking data.\footnote{It is conventional to use $q$ for the Mallows parameter. For consistency with the other models discussed, we have chosen to use $p$ here.}
The Mallows distribution on $\SSS_n$ with parameter $p\in[0,\infty)$ --- which we denote $\snp$ --- assigns to each $\sigma\in\SSS_n$ the probability
\[
\prob{\snp=\sigma} \eq \frac{(1-p)^n p^{\inv(\sigma)}}{\prod_{j=1}^n(1-p^j)} .
\]
Thus, in the \emph{Mallows inversion sequence} $\enp=e_{\snp}$,
each term independently satisfies a truncated geometric distribution.
Specifically, for each $j\in[n]$ and $k=0,\,\ldots,\,j-1$,
\[
\prob{\enp(j)=k} \eq
\begin{cases}
  \dfrac{(1-p) p^k}{1-p^j}, & \text{if~ $p\notin\{0,1\}$}        , \\[10pt]
  1,                    & \text{if~ $p=0$  and $k=0$}        , \\[3pt]
  0,                    & \text{if~ $p=0$  and $k>0$}        , \\[5pt]
  1/j,             & \text{if~ $p=1$}.
\end{cases}
\]
If $p=1$, then $\snp$ is simply a uniformly chosen random $n$-permutation $\s_n$.
Moreover, the identity $\snp[1/p]=\overline{\snp}$ enables us to restrict our attention to $p\leqs1$.

Of particular relevance to our concerns is the work of Crane and DeSalvo~\cite{CDeS2018} 
on consecutive pattern avoidance in $\snp$ (see also~\cite{CDeSE2018}).
Other research of interest (some restricted to constant~$p$) includes
  the longest increasing subsequence in $\snp$~\cite{BB2017,BP2015,MS2013a},
  growth rates for Mallows permutations avoiding classical patterns~\cite{Pinsky2021},
  the number of descents (consecutive 21 patterns) in $\snp$~\cite{He2022},
  and work on Mallows processes~\cite{Corsini2022}.

As with $\snm$ (see Conjecture~\ref{conjSnmUnimodal}), it is to be expected that the mean number of occurrences in $\snp$ of a given pattern is unimodal in $p$.
Given a permutation pattern $\pi$, let $\bbE_{n,p}(\pi)$ denote the expected number of occurrences of $\pi$ in~$\snp$.
\begin{conj}
  If $\pi$ is any consecutive permutation pattern, then
  the function $\bbE_{n,p}(\pi)$
  is unimodal in~$p$.
\end{conj}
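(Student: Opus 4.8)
The plan is to reduce the whole statement to the monotonicity of a single elementary one‑variable function, by first writing down an exact closed form for $\bbE_{n,p}(\pi)$.

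First I would record that the operation $\Psi$ built in the proof of Proposition~\ref{propSnmShiftInvariance} preserves the number of inversions, hence preserves the Mallows weight $p^{\inv(\sigma)}$; so that proof applies verbatim to show the distribution of a consecutive pattern in $\snp$ is independent of its position. Writing $k=|\pi|$ and $s=\inv(\pi)$, this gives (for $n\geqs k$) $\bbE_{n,p}(\pi)=(n+1-k)\,\prob{\pi\text{ occurs at position }1\text{ in }\snp}$. By Proposition~\ref{propPos1inPermPos1inInv} together with Proposition~\ref{propPosjinInvPosjinPerm} in the case $j=1$, the event that $\pi$ occurs at position~$1$ in $\snp$ is exactly the event that $\enp(i)=e_\pi(i)$ for every $i\in[k]$. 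Since the terms of the Mallows inversion sequence are independent with $\prob{\enp(i)=\ell}=(1-p)p^\ell/(1-p^i)$, this probability is
\[
\prod_{i=1}^{k}\frac{(1-p)\,p^{e_\pi(i)}}{1-p^{\,i}}\eq\frac{p^{\,s}}{[k]_p!},
\qquad\text{where}\qquad
[k]_p!\;:=\;\prod_{i=1}^{k}\frac{1-p^{\,i}}{1-p}\eq\prod_{i=2}^{k}\bigl(1+p+\cdots+p^{\,i-1}\bigr),
\]
so that $\bbE_{n,p}(\pi)=(n+1-k)\,p^{\,s}/[k]_p!$, which depends on $\pi$ only through its length and its number of inversions (and is identically $0$ when $n<k$).

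Next I would analyse this function of $p$ through its logarithmic derivative. A short computation identifies $\tfrac{d}{dp}\log\bigl(1+p+\cdots+p^{\,i-1}\bigr)$ with $\tfrac1p\,\bbE[X_i]$, where $X_i$ is the truncated geometric variable on $\{0,1,\dots,i-1\}$ with $\prob{X_i=\ell}\propto p^{\,\ell}$ (equivalently, $X_i$ has the law of $\enp(i)$); hence
\[
\frac{d}{dp}\log\bbE_{n,p}(\pi)\eq\frac{s}{p}-\frac{d}{dp}\log[k]_p!\eq\frac1p\bigl(s-h_k(p)\bigr),
\qquad h_k(p):=\sum_{i=1}^{k}\bbE[X_i].
\]
(The same identity also drops out of the general fact that $\tfrac{d}{dp}\bbE_{n,p}(\pi)=\tfrac1p\cov{N_\pi(\snp),\inv(\snp)}$, with $N_\pi$ counting occurrences, once one expands the covariance using independence; but the closed form makes this unnecessary.) The last step is to observe that $h_k$ is continuous and strictly increasing on $[0,\infty)$ — each $X_i$ is stochastically increasing in~$p$ (its law has monotone likelihood ratio in $p$), so $\bbE[X_i]$ increases — with $h_k(0)=0$ and $h_k(p)\to\sum_{i=1}^{k}(i-1)=\binom{k}{2}$ as $p\to\infty$ (the case $k=1$, where $\pi=1$ and $\bbE_{n,p}(\pi)=n$, being trivial). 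Since $0\leqs s\leqs\binom{k}{2}$, the quantity $s-h_k(p)$ is $\geqs0$ on an initial interval $[0,p^\star]$ and $\leqs0$ on $[p^\star,\infty)$, changing sign at most once; therefore $\bbE_{n,p}(\pi)$ is nondecreasing and then nonincreasing, i.e.\ unimodal, with mode at the unique $p^\star$ solving $h_k(p^\star)=s$ when $0<s<\binom{k}{2}$ (and at an endpoint otherwise) — a value that, strikingly, does not depend on~$n$.

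I do not expect a genuine obstacle: the content is entirely in the closed form of the first step, after which unimodality is just the elementary statement that the mean of a truncated geometric distribution increases from $0$ to its support maximum. It seems worth contrasting this with Conjecture~\ref{conjSnmUnimodal}: for $\snm$ the inversion‑sequence terms are \emph{not} independent, so no analogous product formula is available, which is presumably why that case genuinely resists the same argument.
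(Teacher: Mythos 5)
This statement is posed in the paper as an open conjecture with no proof, so there is nothing of the authors' to compare you against; what you have written is, as far as I can check, a complete and correct proof that settles it. The three load-bearing steps all hold up. First, the bijection $\Psi$ from the proof of Proposition~\ref{propSnmShiftInvariance} preserves $\inv$ and hence the Mallows weight $p^{\inv(\sigma)}$, so position-invariance of consecutive-pattern occurrence does transfer verbatim from $\snm$ to $\snp$, giving $\bbE_{n,p}(\pi)=(n+1-k)\,\prob{\pi \text{ occurs at position } 1 \text{ in } \snp}$. Second, Propositions~\ref{propPos1inPermPos1inInv} and~\ref{propPosjinInvPosjinPerm} (the latter with $j=1$) really do make ``$\pi$ occurs at position $1$'' equivalent to ``$\enp(i)=e_\pi(i)$ for all $i\in[k]$'', and the coordinatewise independence of the Mallows inversion sequence then yields the closed form $p^{s}/[k]_p!$ exactly as you compute (a quick sanity check at $p=1$ gives $1/k!$, as it must). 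Third, the log-derivative $\tfrac1p\bigl(s-h_k(p)\bigr)$ with $h_k$ continuous and strictly increasing from $0$ to $\binom{k}{2}$ (strict monotonicity follows, e.g., from $\tfrac{d}{d\log p}\bbE[X_i]=\var{X_i}>0$ for $i\geqs 2$) changes sign at most once on $(0,\infty)$, which is precisely unimodality, with the monotone boundary cases $s=0$ and $s=\binom{k}{2}$ handled correctly. Your closing remark is also the right diagnosis of why the companion Conjecture~\ref{conjSnmUnimodal} for $\snm$ does not yield to the same attack: conditioning on $\|\e_{n,m}\|=m$ destroys the independence that makes the product formula, and hence the single sign change, available. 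The only presentational suggestion is to state explicitly that the mode $p^\star$ solving $h_k(p^\star)=s$ recovers $p^\star=1$ when $s=\binom{k}{2}/2$ by the symmetry $\bbE[X_i]_{1/p}=(i-1)-\bbE[X_i]_p$, which is a pleasant consistency check with the identity $\s_{n,1/p}=\overline{\snp}$.
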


However, the relationship between $\snp$ and $\snm$ appears to be poorly understood from an evolutionary perspective.
Indeed, only very recently has the expected number of inversions of a Mallows permutation been published for the full range of values of $p=p(n)$.
Note that $\expec{\|\enp\|}\sim \expec{\|\cnp\|}$ if this expectation grows subquadratically.
\together9
\begin{prop} 
  If $0\leqs p<1$ and $a>0$ is constant, then
  \[
  \expec{\inv(\snp)} \;\sim\;
  \begin{cases}
  \frac{p}{1-p} n,                  & \text{if~ $1-p \gg n^{-1}$}             , \\[5pt]
  c(a)n^2,                     & \text{if~ $1-p \sim {a}n^{-1}$}    , \\[5pt]
  \frac14 n^2,                      & \text{if~ $1-p \ll n^{-1}$}           , \\[5pt]
  \end{cases}
  \]
  with
  \[
  c(a) \eq 
  \frac1{a} - \frac1{a^2}\mathrm{Li}_2(1-e^{-a}) ,
  \]
  where $\mathrm{Li}_2(z)$ is the dilogarithm function. The constant $c(a)$
  satisfies $\lim\limits_{a\to0}c(a)=\frac14$, and for large $a$, we have $c(a) \sim 1/a$.
\end{prop}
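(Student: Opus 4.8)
The plan is to reduce the statement to one exact identity and then estimate a single sum in each of the three regimes. Since the terms of the Mallows inversion sequence $\enp=e_{\snp}$ are independent, $\expec{\inv(\snp)}=\sum_{j=1}^{n}\expec{\enp(j)}$. For $0<p<1$ the law of $\enp(j)$ is a geometric distribution with parameter $1-p$ conditioned to lie in $\{0,\dots,j-1\}$, so differentiating $\sum_{k=0}^{j-1}p^{k}=\tfrac{1-p^{j}}{1-p}$ yields
\[
\expec{\enp(j)} \eq \frac{1-p}{1-p^{j}}\sum_{k=0}^{j-1}kp^{k} \eq \frac{p}{1-p}-\frac{jp^{j}}{1-p^{j}}
\]
(and $\expec{\inv(\snp)}=0=\tfrac{p}{1-p}\,n$ trivially when $p=0$). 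Writing $q=1-p$ and summing over $j$,
\[
\expec{\inv(\snp)} \eq \frac{np}{q}-S_{n}(p),\qquad\text{where}\qquad S_{n}(p) \eq \sum_{j=1}^{n}\frac{jp^{j}}{1-p^{j}},
\]
so everything reduces to the asymptotics of $S_{n}(p)$. Throughout I write $\lambda=-\ln p\geqs q$ and $\phi(x)=\tfrac{x}{e^{x}-1}$, a decreasing function with $0<\phi\leqs1$ and $\int_{0}^{\infty}\phi=\tfrac{\pi^{2}}{6}$, so that $\tfrac{jp^{j}}{1-p^{j}}=\tfrac{1}{\lambda}\,\phi(j\lambda)$.

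First regime ($q\gg n^{-1}$). If $p\leqs\tfrac12$, then $S_{n}(p)\leqs q^{-1}\sum_{j\geqs1}jp^{j}=p/q^{3}\leqs 8p$, while $np/q\geqs np$, so $S_{n}(p)=o(np/q)$. If $p>\tfrac12$, then comparing the sum $\sum_{j=1}^{n}\phi(j\lambda)$ with $\int_{0}^{\infty}\phi$ (using that $\phi$ is decreasing) gives $S_{n}(p)\leqs\tfrac{1}{\lambda}+\tfrac{\pi^{2}}{6\lambda^{2}}=O(q^{-2})$, hence $S_{n}(p)/(np/q)=O(1/(npq))=O(1/(nq))\to 0$. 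Either way $\expec{\inv(\snp)}\sim np/q=\tfrac{p}{1-p}n$.

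Second regime ($q\sim an^{-1}$). Here $\lambda=q+O(q^{2})\sim a/n$ and $n\lambda\to a$, and $S_{n}(p)=\tfrac{1}{\lambda}\sum_{j=1}^{n}\phi(j\lambda)$ is $\lambda^{-1}$ times a Riemann sum of the bounded continuous function $\phi$ over $[0,n\lambda]$ with mesh $\lambda$ (the only care needed is that the nodes are $j\lambda$ rather than $aj/n$, which is handled by the monotonicity of $\phi$), so
\[
S_{n}(p) \;\sim\; \frac{1}{\lambda^{2}}\int_{0}^{a}\frac{x}{e^{x}-1}\,dx \;\sim\; \frac{n^{2}}{a^{2}}\,\mathrm{Li}_2(1-e^{-a}),
\]
where the last step uses $\tfrac{d}{da}\int_{0}^{a}\tfrac{x}{e^{x}-1}\,dx=\tfrac{a}{e^{a}-1}=\tfrac{d}{da}\mathrm{Li}_2(1-e^{-a})$ together with both sides vanishing at $a=0$ (equivalently, integrate $\tfrac{1}{e^{x}-1}=\sum_{k\geqs1}e^{-kx}$ termwise and apply Euler's reflection formula $\mathrm{Li}_2(z)+\mathrm{Li}_2(1-z)=\tfrac{\pi^{2}}{6}-\ln z\,\ln(1-z)$ at $z=e^{-a}$). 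Since $np/q\sim n^{2}/a$, this gives $\expec{\inv(\snp)}\sim\tfrac{n^{2}}{a}-\tfrac{n^{2}}{a^{2}}\mathrm{Li}_2(1-e^{-a})=c(a)n^{2}$. Third regime ($q\ll n^{-1}$). Then $jq\leqs nq=o(1)$ for all $j\leqs n$, so Taylor expansion of the exact formula gives $\expec{\enp(j)}=\tfrac{j-1}{2}+O(j^{2}q)$ uniformly in $j\leqs n$ (consistent with $\snp=\s_{n}$ when $p=1$); summing, $\expec{\inv(\snp)}=\tfrac{n(n-1)}{4}+O(qn^{3})\sim\tfrac14 n^{2}$. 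Finally, for the stated behaviour of $c(a)$: as $a\to0$, $\mathrm{Li}_2(1-e^{-a})=a-\tfrac14 a^{2}+O(a^{3})$, whence $c(a)\to\tfrac14$; as $a\to\infty$, $\mathrm{Li}_2(1-e^{-a})\to\mathrm{Li}_2(1)=\tfrac{\pi^{2}}{6}$, whence $c(a)=\tfrac1a-\tfrac{\pi^{2}}{6a^{2}}+o(a^{-2})\sim\tfrac1a$; these match the adjacent regimes.

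The main obstacle is the second regime: one has to convert $S_{n}(p)$ into an integral with a little care (the mesh of the Riemann sum is only asymptotic to $a/n$) and then recognise the answer as a dilogarithm. A secondary point, easy to overlook, is that in the first regime the one-line bound $S_{n}(p)=O(p/q^{3})$ only yields $o(np/q)$ when $nq^{2}\to\infty$, so the sharper estimate $S_{n}(p)=O(q^{-2})$ is genuinely needed when $p$ is close to $1$.
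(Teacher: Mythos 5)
Your proposal is correct, and it is both more self-contained and structured differently from the paper's proof, so a comparison is worthwhile. Both arguments start from the same exact identity $\expec{\enp(j)} \eq \frac{p}{q} - \frac{jp^j}{1-p^j}$, but the paper only carries out the analysis directly in (part of) the first regime: it shows that for $j$ beyond a logarithmic cutoff the term $\frac{jp^j}{1-p^j}$ is negligible, treating separately the sub-cases $p\ll1$ and $q=\omega/n$ with $1\ll\omega\ll n$, and then \emph{cites Pinsky} (Proposition~1.2 and Theorem~1.3 of his paper) for constant $p$, for $q\sim an^{-1}$, and for $q\ll n^{-1}$ --- in particular the dilogarithm constant $c(a)$ is imported rather than derived. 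You instead sum the identity to get $\expec{\inv(\snp)}\eq np/q - S_n(p)$ with $S_n(p)\eq\lambda^{-1}\sum_{j=1}^n\phi(j\lambda)$, $\phi(x)=x/(e^x-1)$, $\lambda=-\ln p$, and then estimate this single sum in each regime: crude bounds (correctly split at $p=\tfrac12$, since the naive bound $O(p/q^3)$ fails when $nq^2\not\to\infty$) for $q\gg n^{-1}$; a monotone Riemann-sum limit $\int_0^a\phi=\mathrm{Li}_2(1-e^{-a})$ for $q\sim an^{-1}$, with the antiderivative identity verified by differentiation; and a uniform Taylor expansion recovering $\expec{\enp(j)}=\tfrac{j-1}{2}+O(j^2q)$ for $q\ll n^{-1}$. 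The one point worth making explicit in the second regime is that the subtraction $\tfrac{n^2}{a}-\tfrac{n^2}{a^2}\mathrm{Li}_2(1-e^{-a})$ of two quantities of the same order is legitimate only because $c(a)>0$, which follows from $\phi(x)<1$ for $x>0$; your limiting values $c(0^+)=\tfrac14$ and $c(a)\sim1/a$ implicitly confirm this. What your route buys is a complete proof of all three cases from one formula, at the cost of slightly more bookkeeping than the paper's term-by-term comparison with $p/q$.
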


\together5
\begin{proof}
  Let $q=1-p$.
  When $q\gg n^{-1}$, we show that $\expec{\enp(j)}$ is close to $p/q$ for most values of~$j$.

  Suppose first that $p\ll1$, and $j > 2+\log_{1/p}(n+p^2)$. Then,
  \[
    \expec{\enp(j)}
    \eq \sum_{k=0}^{j-1} \dfrac{k q p^k}{1-p^j}
    \eq \frac{p}q - \frac{j p^j}{1-p^j}
    \;\geqs\; \frac{p}q - \frac{n p^j}{1-p^j}
    \;>\; \frac{p}q - p^2 .
  \]
  Thus, $\big(1-o(1)\big)np/q < \expec{\|\enp\|} < np/q$.

  Now suppose that $q={\omega}/n$, where $1\ll \omega\ll n$, so that $n^{-1}\ll q\ll1$ and $p\sim1$.
  Suppose also that
  \[
  j \;>\; \frac{\log(nq\omega+1)}{-\log(1-q)} \;\sim\; 2n\frac{\log\omega}{\omega} .
  \]
  Then,
  \begin{align*}
  \expec{\enp(j)}
    \eq \sum_{k=0}^{j-1} \dfrac{k q p^k}{1-p^j}
  & \eq q^{-1}-1-j\left(\frac{1}{1-(1-q)^j}-1\right) \\
  & \;\geqs\; q^{-1}-1-n\left(\frac{1}{1-(1-q)^j}-1\right)
  \;>\; q^{-1}-1-\frac{q^{-1}}{\omega} .
  \end{align*}
  Thus, $\big(1-o(1)\big)n/q < \expec{\|\enp\|} < n/q$.

  Proofs for the other ranges of values of $p$ ($p$ constant, $q\sim an^{-1}$ and $q\ll n^{-1}$) can be found in Pinsky~\cite[Proposition~1.2 and Theorem~1.3]{Pinsky2022}.
\end{proof}

The situation for random permutations is in contrast with that for random graphs~\cite{Bollobas2001,FK2015,JLR2000}, where the connection between results concerning $\gnp$ and those concerning $\gnm$ has been very well studied.
The relationship between $\snp$ and $\snm$ deserves further investigation.


\section{Appendix: inversion sequence inequalities}

In this appendix, we establish the correspondence between an occurrence of a consecutive pattern in a permutation $\sigma$ and the inequalities satisfied by terms of $e_\sigma$.
If $\pi(i)=h$ then we write $\pi^{-1}(h) = i$.

\begin{prop}\label{propIneqs}
  If $\pi$ is a consecutive pattern of length $k$, then $\pi$ occurs at position $j$ in a permutation~$\sigma$ if and only if, for each $h\in[k-1]$,
  \[
        e_\sigma\big(j-1+\pi^{-1}(h)\big) \:-\: e_\pi\big(\pi^{-1}(h)\big) \:\:\geqs\:\: e_\sigma\big(j-1+\pi^{-1}(h+1)\big) \:-\: e_\pi\big(\pi^{-1}(h+1)\big) .
  \]
\end{prop}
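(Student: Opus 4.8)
The plan is to reduce everything to a single identity that localises the inversion sequence of $\sigma$ to the window $\{j,\ldots,j+k-1\}$, and then to treat the two implications separately, the harder (``if'') direction by induction on~$k$.

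First I would record the following. Let $\tau=\sigma[j,j+k-1]$ be the pattern carried by the $k$ consecutive points of $\sigma$ starting at position~$j$, and for $u\in[k]$ put $L(u)=\big|\{i<j:\sigma(i)>\sigma(j-1+u)\}\big|$. Splitting the points lying to the upper left of $(j-1+u,\sigma(j-1+u))$ according to whether their first coordinate is in the window or strictly to its left gives
\[
e_\sigma(j-1+u) \eq e_\tau(u) \:+\: L(u),
\]
and $L$ is anti-monotone in the sense that $\sigma(j-1+u)<\sigma(j-1+u')$ implies $L(u)\geqs L(u')$. Setting $g(h)=e_\sigma\big(j-1+\pi^{-1}(h)\big)-e_\pi\big(\pi^{-1}(h)\big)$, the proposition asserts that $\sigma[j,j+k-1]=\pi$ if and only if $g(1)\geqs g(2)\geqs\ldots\geqs g(k)$, and the identity rewrites this as
\[
g(h) \eq L\big(\pi^{-1}(h)\big) \:+\: \Big(e_\tau\big(\pi^{-1}(h)\big)-e_\pi\big(\pi^{-1}(h)\big)\Big).
\]
The forward direction is then immediate: if $\tau=\pi$ the bracket vanishes, $g(h)=L\big(\pi^{-1}(h)\big)$, and since $\pi^{-1}(1),\ldots,\pi^{-1}(k)$ lists the window positions in order of increasing $\sigma$-value, anti-monotonicity of $L$ makes $g$ non-increasing.

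For the converse I would induct on~$k$, the case $k=1$ being vacuous. Write $\ell=\pi(k)$ and let $\pi'$ be $\pi$ with its last point deleted. Because $e_{\pi'}$ agrees with $e_\pi$ on $[k-1]$, and $(\pi')^{-1}$ is obtained from $\pi^{-1}$ by removing the entry $k$ and decrementing the values exceeding~$\ell$, a short check shows that the sequence $g'$ associated to $\pi'$ at position~$j$ is exactly $g$ with its $\ell$th term deleted; hence $g$ non-increasing forces $g'$ non-increasing, and the inductive hypothesis yields $\sigma[j,j+k-2]=\pi'$. So $\tau$ agrees with $\pi$ except possibly in its last value $r=\tau(k)$, and it remains to prove $r=\ell$. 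The identity gives $e_\tau(k)-e_\pi(k)=(k-r)-(k-\ell)=\ell-r$, hence $g(\ell)=L(k)+(\ell-r)$, while each $g(h)$ with $h\neq\ell$ equals $L\big((\pi')^{-1}(v)\big)$ for a suitable $v\in[k-1]$. Assuming $r\neq\ell$, I would compare $g(\ell)$ with $g(\ell-1)$ when $r<\ell$ (valid, since then $\ell\geqs 2$) or with $g(\ell+1)$ when $r>\ell$ (valid, since then $\ell\leqs k-1$): adjoining the $k$th point, which has rank $r$ in the full window, shifts the rank of the relevant $(k-1)$-window entry by a determined amount, and anti-monotonicity of $L$ then forces $g(\ell)>g(\ell-1)$, respectively $g(\ell+1)>g(\ell)$, contradicting that $g$ is non-increasing. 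Therefore $r=\ell$, so $\tau=\pi$ and $\pi$ occurs at position~$j$.

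The delicate point is this closing rank computation, where the sign of $\ell-r$ must be matched correctly against the direction of $L(u)\leqs L(u')$; one should also observe that the extremal cases $\ell=1$ and $\ell=k$ need no separate treatment, since then $r\neq\ell$ already forces, respectively, $r>\ell$ or $r<\ell$, so the required neighbouring index is in range. The remaining ingredients---the window/left split, invariance of the truncated inversion sequence under deleting the last point, and anti-monotonicity of $L$---are routine.
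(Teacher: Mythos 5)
Your proof is correct, and the ``only if'' half is essentially the paper's argument in different notation: your $L(u)$ is the paper's count $N(i)$ of points above a window point and strictly to the left of the occurrence, and anti-monotonicity of $L$ is the paper's observation that the successive differences $a_h$ count points in rectangles and are therefore nonnegative. Where you genuinely diverge is the converse. The paper fixes an arbitrary pattern $\rho\neq\pi$ of length $k$, takes the least index $s$ at which $\pi$ and $\rho$ first disagree in relative order, and exhibits a single inequality (a consequence of the chain for $\pi$, by transitivity) that an occurrence of $\rho$ violates strictly; since some length-$k$ pattern must occur at position $j$, it must be $\pi$. You instead induct on $k$: deleting the last point of $\pi$ deletes exactly the $\ell$th term of the chain (I checked the index bookkeeping $(\pi')^{-1}(h)=\pi^{-1}(h)$ for $h<\ell$ and $\pi^{-1}(h+1)$ for $h\geq\ell$, and it is right), so the inductive hypothesis pins down the first $k-1$ window points, and the closing rank comparison --- $g(\ell)=L(k)+\ell-r$ against $g(\ell-1)$ or $g(\ell+1)$, with the boundary cases $\ell=1,k$ handled as you note --- correctly forces $r=\ell$ in both sign cases. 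The paper's route is shorter and simultaneously yields the slightly stronger statement that the inequality systems of distinct patterns are mutually exclusive in both directions (``and \emph{vice versa}''); your route is more mechanical but makes the structural content explicit via the clean identity $e_\sigma(j-1+u)=e_\tau(u)+L(u)$, which localises everything to the window and would adapt easily to related statements about truncations of patterns. No gaps.
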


For example, as illustrated in Figure~\ref{figIneqs}, the consecutive pattern $4132$ occurs at position 7 in a permutation~$\sigma$ if and only~if
\[
e_\sigma(8) - 1 \:\:\geqs\:\:
e_\sigma(10) - 2 \:\:\geqs\:\:
e_\sigma(9) - 1 \:\:\geqs\:\:
e_\sigma(7) 
.
\]

\begin{proof}
  To abbreviate, for each $i\in[k]$, let $e_\sigma^{j+}(i)=e_\sigma(j-1+i)$.
  Suppose $\pi$ occurs at position $j$ in~$\sigma$.
  For each $i\in[k]$, let $P_i$ be the $i$th point from the left in the occurrence of $\pi$ in~$\sigma$.
  Then, by definition, for each $i$, the inversion sequence entry $e_\sigma^{j+}(i)$ is the number of points in $\sigma$ above and to the left of~$P_i$.
  Also, $e_\pi(i)$ is the number of points above and to the left of $P_i$ in the occurrence of~$\pi$.
  Thus, $N(i):=e_\sigma^{j+}(i)-e_\pi(i)$ is the number of points above $P_i$ that are to the left of the occurrence of~$\pi$.

\begin{figure}[t]
  \centering
  \begin{tikzpicture}[scale=0.25]
    \plotperm{13}{5,13,12,2,9,4,0,0,0,0,10,1,8}
    \circpt{7}{11}
    \circpt{8}{3}
    \circpt{9}{7}
    \circpt{10}{6}
    \draw[thin,gray!75!blue] (6.5,0.5)--(6.5,13.5);
    \draw[thin,gray!75!blue] (0.5,3)--(6.5,3);
    \draw[thin,gray!75!blue] (0.5,6)--(6.5,6);
    \draw[thin,gray!75!blue] (0.5,7)--(6.5,7);
    \draw[thin,gray!75!blue] (0.5,11)--(6.5,11);
    \node at (-2.25,4.5)   {$a_1=2$};
    \node at (-2.25,6.5)   {$a_2=0$};
    \node at (-2.25,9)     {$a_3=1$};
    \node at (16.25,3)     {\phantom{$a_0=0$}};
  \end{tikzpicture}
  \caption{An occurrence of the consecutive pattern $4132$ at position 7 in a permutation}\label{figIneqs}
\end{figure}
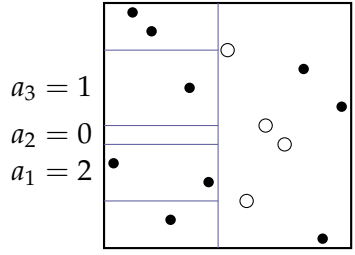

  Now, for each $h\in[k]$, let $P'_h=P_{\pi^{-1}(h)}$ be the $h$th point from the {bottom} in the occurrence of~$\pi$.
  Also, if $h\neq k$, let $a_h=N\big(\pi^{-1}(h)\big) - N\big(\pi^{-1}(h+1)\big)$, which is thus the number of points to the left of the occurrence of $\pi$ that lie above $P'_h$ but below $P'_{h+1}$.
  See Figure~\ref{figIneqs} for an example, each $a_h$ being the number of points in the adjacent rectangle.

  Since each $a_h$ counts the number of points in a well-defined region of $\sigma$, each $a_h$ is nonnegative.
  But $a_h$ equals the difference between the left and right hand sides of the inequality in the statement of the proposition,
  so this inequality holds for each $h\in[k-1]$ as required. 

  Suppose now that $\rho$ is a consecutive pattern of length $k$ that is distinct from $\pi$.
  We need to prove that the series of inequalities satisfied by the entries in $e_\sigma$ when $\pi$ occurs at position~$j$ in~$\sigma$ are never all satisfied when $\rho$ occurs at position~$j$.

  Suppose $s$ is the least index such that the relative order of the first $s$ points of $\pi$ differs from that of the first $s$ points of~$\rho$.
  Let $r<s$ be an index such that the relative order of $\pi(r)\pi(s)$ differs from that of~$\rho(r)\rho(s)$.
  We may assume that $\pi(r)<\pi(s)$ but~$\rho(r)>\rho(s)$.

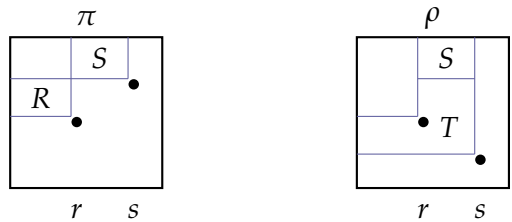
\begin{figure}[ht]
  \centering
  \begin{tikzpicture}[scale=0.25]
    \plotperm{8}{0,0,0,4,0,0,6}
    \node at (4.5,9.5){$\pi$};
    \node at (4,-.75)   {$r$};
    \node at (7,-.75)   {$s$};
    \node at (2.1,5.3)  {$R$};
    \node at (5.2,7.4)  {$S$};
    \draw[thin,gray!75!blue] (0.5,4.3)--(3.7,4.3);
    \draw[thin,gray!75!blue] (0.5,6.3)--(6.7,6.3);
    \draw[thin,gray!75!blue] (3.7,4.3)--(3.7,8.5);
    \draw[thin,gray!75!blue] (6.7,6.3)--(6.7,8.5);
  \end{tikzpicture}
  $\qquad\qquad\qquad$
  \begin{tikzpicture}[scale=0.25]
    \plotperm{8}{0,0,0,4,0,0,2}
    \node at (4.5,9.5){$\rho$};
    \node at (4,-.75)   {$r$};
    \node at (7,-.75)   {$s$};
    \node at (5.2,7.4)  {$S$};
    \node at (5.35,3.75){$T$};
    \draw[thin,gray!75!blue] (0.5,2.3)--(6.7,2.3);
    \draw[thin,gray!75!blue] (0.5,4.3)--(3.7,4.3);
    \draw[thin,gray!75!blue] (3.7,6.3)--(6.7,6.3);
    \draw[thin,gray!75!blue] (3.7,4.3)--(3.7,8.5);
    \draw[thin,gray!75!blue] (6.7,2.3)--(6.7,8.5);
  \end{tikzpicture}
  \caption{An illustration of the second part of the proof of Proposition~\ref{propIneqs}}\label{figIneqsProof}
\end{figure}

  Let $R$ be the number of points in $\pi$ above and to the left of 
  $\pi(r)$ that are not above and to the left of~$\pi(s)$, and
  let $S$ be the number of points in $\pi$ above and to the left of $\pi(s)$ that are not above and to the left of~$\pi(r)$.
  See the left of Figure~\ref{figIneqsProof} for an illustration.
  Now, if $\pi$ occurs at position $j$ in $\sigma$, then
  \[
  e_\sigma^{j+}(r) \:-\: R \:\:\geqs\:\: e_\sigma^{j+}(s) \:-\: S .
  \]
  Now consider the points in $\rho$ above and to the left of $\rho(s)$ that are not above and to the left of~$\rho(r)$.
  Note that this includes every point counted by $S$, because the first $s-1$ points of $\pi$ and the first $s-1$ points of $\rho$ have the same relative order.
  Let $S+T$ be their total number.
  See the right of Figure~\ref{figIneqsProof} for an illustration.
  Now, if $\rho$ occurs at position $j$ in $\sigma$, then
  \[
  e_\sigma^{j+}(s) \:-\: (S+T) \:\:\geqs\:\: e_\sigma^{j+}(r) .
  \]
  Note that $T\geqs1$ because $\rho(r)$ is itself counted by $T$.

  \together5
  Thus, if $\pi$ occurs at position $j$ in $\sigma$, then
  \[
  e_\sigma^{j+}(r) \:-\:  e_\sigma^{j+}(s) \:\:\geqs\:\: -S \:+\: R \:\:\geqs\:\: -S .
  \]
  However, if $\rho$ occurs at position $j$ in $\sigma$, then
  \[
  e_\sigma^{j+}(r) \:-\:  e_\sigma^{j+}(s) \:\:\leqs\:\: -S \:-\: T \:\:<\:\: -S ,
  \]
  which is the negation of the inequality for an occurrence of~$\pi$.
  Hence, for any host permutation~$\sigma$, there is at least one inequality satisfied by the entries in $e_\sigma$ when $\pi$ occurs at position~$j$ in~$\sigma$ that is not satisfied when $\rho$ occurs at position~$j$, and \emph{vice versa}.
\end{proof}

\subsection*{Acknowledgements}
The authors are grateful to Ross Pinsky and to an anonymous referee for comments that led to improvements to the paper.

\bibliographystyle{plain}
{\footnotesize\bibliography{../bib/mybib}}

\end{document}